\newcommand{\R}{\mathbb{R}}
\newcommand{\Q}{\mathbb{Q}}
\newcommand{\Z}{\mathbb{Z}}
\newcommand{\N}{\mathbb{N}}
\newcommand{\mL}{\mathcal{L}}
\DeclareRobustCommand{\rchi}{{\mathpalette\irchi\relax}}
\newcommand{\irchi}[2]{\raisebox{\depth}{$#1\chi$}}
\newtheorem{thm}{Theorem}[section]
\newtheorem{lem}[thm]{Lemma}
\newtheorem{cor}[thm]{Corollary}
\theoremstyle{remark}
\newtheorem{defn}{Definition}[section]
\newtheorem{conj}[defn]{Conjecture}
\newtheorem{prob}[defn]{Problem}
\newtheorem{rem}[defn]{Remark}
\numberwithin{equation}{section}
\author{Jing-Jing Huang}
\address{Department of Mathematics and Statistics, University of Nevada, Reno, 1664 N. Virginia St., Reno, NV 89557, USA}
\email{jingjingh@unr.edu}
\dedicatory{Dedicated to Professor John Friedlander}
\thanks{Research is partially supported by the Simons Foundation Mathematics and Physical Sciences-Collaboration Grants for Mathematicians 855860}
\subjclass[2010]{Primary 11J83, Secondary 11J13, 11P21}
\begin{document}

\title[Extremal affine subspaces]{Extremal affine subspaces and Khintchine-Jarn\'{i}k type theorems}

\begin{abstract}
We prove a conjecture of Kleinbock which gives a clear-cut classification of all extremal affine subspaces of $\R^n$. We also give an essentially complete classification of all Khintchine type affine subspaces, except for some boundary cases within two logarithmic scales. More general Jarn\'{i}k type theorems are proved as well, sometimes without the monotonicity of the approximation function.  These results follow as consequences of  our novel estimates for the number of rational points close to an affine subspace in terms of diophantine properties of its defining matrix.  Our main tool is the multidimensional large sieve inequality and its dual form.

 \end{abstract}
\maketitle

\tableofcontents

\section{Diophantine exponents and extremal manifolds}\label{DioExpExtremal}

\subsection{Dirichlet's theorem and diophantine exponents of matrices}

Classical diophantine approximation concerns how well one can approximate points in $\R^n$ by the rationals in terms of the heights of the latter. For example, Dirichlet's theorem states that for any $\bm{y}=(y_1,y_2,\ldots, y_n)$, one can find infinitely many $(q,p_1,\ldots, p_n)\in\N\times\Z^{n}$ such that 
$$
\left|\bm{y}-\frac{\bm{p}}q\right|<q^{-1-\frac1n}.
$$
Here and throughout the paper, the absolute value $|\cdot|$ will be used to denote the supremum norm of  vectors or matrices, e.g.
$$
|\bm{y}|:=|\bm{y}|_\infty=\max\{|y_1,|y_2|,\ldots, |y_n|\}.
$$
The drop of the subscript $\infty$ should cause no confusion since we will not make use of the Euclidean norm under any circumstance. Also bold letters are used exclusively for vectors, and by default they are assumed to be row vectors.

In a more abbreviated form, Dirichlet's theorem can be rephrased as 
$$
\mathcal{S}_n(1/n)=\R^n
$$
where
\begin{equation}\label{TauAppr}
\mathcal{S}_n(\tau):=\left\{\bm{y}\in\R^n: \|{q}\bm{y}\|<q^{-\tau} \textrm{ for infinitely many }{q}\in\N\right\}
\end{equation}
denotes the set of points that can be simultaneously approximated by infinitely many rational points $\bm{p}/q$ within distance $q^{-1-\tau}$,
 and  as usual 
$$\| \bm{y}\|:=\min_{\bm{k}\in\Z^n}|\bm{y}-\bm{k}|$$ means the $\ell^\infty$ distance of $\bm{y}$ to the nearest integer vector.

The same discussion can be extended to the setting of matrices.   We will first introduce the concept of diophantine exponent of a matrix.

\begin{defn}
The \emph{diophantine exponent} $\omega(A)$ of a $d\times m$ real matrix $A$ is defined as the supremum of the set of $\omega\in\R$ such that
$$
\|A\bm{q}^\mathrm{T}\|<|\bm{q}|^{-\omega}
$$
 holds for infinitely many $\bm{q}\in\Z^m$.
In particular, if $\omega(A)<\infty$, we say $A$ is \emph{diophantine}. \end{defn}

It is a straightforward consequence of Minkowski's theorem on linear forms that 
\begin{equation}\label{DirichletExponent}
\omega(A)\ge\frac{m}d
\end{equation}
for all matrices $A\in\R^{d\times m}$.  This can also be proved directly via Dirichlet's box principle, and for this reason $m/d$ is sometimes called the Dirichlet exponent in $\R^{d\times m}$. Moreover, a Borel-Cantelli argument reveals that 
\begin{equation}\label{GenericExponent}
\omega(A)=\frac{m}d\quad\text{for almost all }A\in\R^{d\times m}
\end{equation}
in the sense of Lebesgue measure on $\R^{d\times m}$.

Within the above context, apparently $\R^n$ can be realized either as $\R^{n\times1}$ (column vectors) or $\R^{1\times n}$ (row vectors). The former case is usually referred to as \emph{simultaneous diophantine approximation}, while the latter as  \emph{dual diophantine approximation}. In particular, for $\bm{y}\in\R^n$, we have two diophantine exponents: $\omega(\bm{y})$ and $\omega(\bm{y}^\mathrm{T})$. Throughout the paper, we will use $\sigma(\bm{y})$ to represent the latter for simplicity.  So when $\bm{y}$ is identified with a row vector (as is the case in this paper), $\omega(\bm{y})$ is called the \emph{dual diophantine exponent} of $\bm{y}$ and $\sigma(\bm{y})$ is called the \emph{simultaneous diophantine exponent} of $\bm{y}$. The Dirichlet bound \eqref{DirichletExponent} then states in this case that $\sigma(\bm{y})\ge1/n$ and $\omega(\bm{y})\ge n$. 

Though the two diophantine exponents defined above are linked via Khintchine's transference principle (see \cite[Chapter V Theorem 4]{Cas57}) 
\begin{equation}\label{TransferPrinciple}
\frac{\omega(\bm{y})}{(n-1)\omega(\bm{y})+n}\le \sigma(\bm{y})\le\frac{\omega(\bm{y})-n+1}{n},
\end{equation}
knowing one of them does not in general allow us to determine the other, except for the special case that they are respectively equal to the corresponding Dirichlet exponent, i.e.  $\omega(\bm{y})=n\iff\sigma(\bm{y})=1/n$.

It is clear that $\mathcal{S}_n(\tau)$ and $\sigma(\bm{y})$ are  related in that 
\begin{equation}\label{SigmaSRelation}
\bigcup_{\tau>\sigma}\mathcal{S}_n(\tau)=\big\{\bm{y}\in\R^n\bigm|\sigma(\bm{y})>\sigma\big\}.
\end{equation}
As mentioned above, it follows from the Borel-Cantelli lemma that 
$|\mathcal{S}_n(\tau)|_{\R^n}=0$ whenever $\tau>1/n$, 
where $|\cdot|_{\R^n}$ stands for the Lebesgue measure on $\R^n$.
 This means that the simultaneous Dirichlet exponent $1/n$ is best possible  for almost all points in $\R^n$.
However, the issue becomes highly nontrivial if one focuses on a given submanifold $\mathcal{M}$ of $\R^n$ and asks whether the Dirichlet exponent $1/n$ still remains best possible for almost all points on $\mathcal{M}$. This naturally leads to the definition of extremal manifolds. 

\subsection{Extremal manifolds}\label{ExtremalManifolds}
\begin{defn}
A submanifold $\mathcal{M}\subseteq\R^n$  is said to be \emph{extremal} if $$|\mathcal{S}_n(\tau)\cap\mathcal{M}|_{\mathcal{M}}=0 \quad\text{when}\;\;\tau>1/n,$$ where $|\cdot|_\mathcal{M}$ means the induced Lebesgue measure on $\mathcal{M}$.
\end{defn}

Note that the notion of extremality is very much extrinsic in nature, as it heavily depends on how the manifold is embedded (or more generally immersed) in $\R^n$. It follows from the discussion above that $\R^n$ itself, when viewed as a manifold in $\R^n$, is extremal. 

In 1964, Sprind\v{z}uk proved that the Veronese curve 
$$
\mathcal{V}_n=\left\{(x,x^2,\ldots, x^n)\bigm|x\in\R\right\}
$$
is extremal \cite{Spr69}, settling a conjecture of Mahler. Sprind\v{z}uk then went on to conjecture that any analytic nondegenerate manifold is extremal. For analytic manifolds, ``nondegenerate" just means ``not contained in any hyperplanes". We take this opportunity to  refer the readers to the excellent book of Sprind\v{z}uk \cite{Spr79} for more backgrounds surrounding this problem and a very nice introduction to metric diophantine approximation in general.     After decades of partial progress by numerous people, Sprind\v{z}uk's far-reaching conjecture was eventually proved in full generality in a milestone work by Kleinbock and Margulis in 1998 using homogeneous dynamics \cite{KM98}.

On the other side of the spectrum, there are many degenerate manifolds, such as those contained in proper affine subspaces of $\R^n$. Further developing the method in \cite{KM98}, Kleinbock has subsequently proved that an analytic manifold is extremal if and only if its affine span (the minimal affine subspace that contains it) is extremal \cite{Kle03}. In other words, this remarkable result says that the extremality of an affine subspace is inherited by its nondegenerate submanifolds. Therefore, to classify extremal manifolds, it is imperative to understand exactly what affine subspaces are extremal.

It is not hard to see that affine subspaces  may or may not be extremal. For instance, a proper $d$-dimensional rational affine subspace of $\R^n$ (images of $\R^d$ under some rational affine transformation) exhibits similar diophantine properties to those of $\R^d$ rather than $\R^n$, in that a typical point on it has simultaneous diophantine exponent $1/d$, which is greater than the Dirichlet exponent $1/n$ in $\R^n$. As a result, proper rational affine subspaces are clearly not extremal. Next, we will discuss some known extremal affine subspaces in the literature before unfolding our first main result. 

The first positive example seems to be due to Schmidt \cite{Sch64}, who proved in 1964 that  a line in $\R^n$ parametrized as follows
\begin{equation}\label{LineInRn}
\mathcal{L}_{\bm{\alpha},\bm{\beta}}=\left\{(x,\alpha_1x+\beta_1,\ldots, \alpha_{n-1}x+\beta_{n-1})\bigm|x\in\R\right\}
\end{equation}
is extremal if either $\omega(\bm{\alpha})$ or $\omega(\bm{\beta})$ is equal to $n-1$. Even though Schmidt's condition is typical in view of \eqref{GenericExponent}, it turns out that it can be greatly relaxed as we shall see soon. 
In the homogeneous case $\bm{\beta}=\bm{0}$, that is, $\mathcal{L}$ is a line passing through the origin, more has been proved.  It follows from independent works of Beresnevich, Bernik, Dickinson, Dodson \cite{BBDD00}, and Kovalevskaya \cite{Kov00}, that $\omega(\bm{\alpha})<n$ suffices for $\mathcal{L}_{\bm{\alpha},\bm{0}}$ to be extremal. Soon after, Kleinbock made a systematic study of this problem in general \cite{Kle03,Kle08},  and in order to properly state his main results we need to introduce some notation.  

 Consider a $d$-dimensional affine subspace $\mathcal{L}$ of $\R^n$ parametrized as follows

 \begin{equation}\label{AffineSubspace}
        \mathcal{L}=\mathcal{L}_A :=\left\{(\bm{x},\widetilde{\bm{x}}A)\bigm|\bm{x}\in{\R^d} \right\},
\end{equation}
where
$$
\widetilde{\bm{x}}=(1,\bm{x}),
$$ 
$$
A=\begin{pmatrix}
\bm{\beta}\\
A'
\end{pmatrix}
\in \R^{(d+1)\times m},\quad A'\in \R^{d\times m},
$$
and
$$
d=\dim\mathcal{L},\quad m=n-d=\text{codim}\,\mathcal{L}.
$$
Kleinbock has shown that if all rows (resp. columns) of $A$ are rational multiples of one row (resp. column), then $\mathcal{L}$ is extremal if and only if $\omega(A)\le n$ (see \cite[Theorem 1.3]{Kle03} and its strengthening \cite[Theorem 0.4]{Kle08}). In particular, the following scenarios are covered by his theorem. 
\begin{enumerate}[(K1)]
\item {affine coordinate subspaces: } $A=\begin{pmatrix}\bm{\beta}\\\bm{0}\end{pmatrix}\in\R^{(d+1)\times m}$ with $\bm{\beta}\in\R^m$;\\
\item lines passing through the origin: $A=\begin{pmatrix}\bm{0}\\\bm{\alpha}\end{pmatrix}\in\R^{2\times(n-1)}$ with $\bm{\alpha}\in\R^{n-1}$;\\
\item hyperplanes: $A=(\beta,\alpha_1,\ldots, \alpha_{n-1})^\mathrm{T}\in\R^{n\times 1}$.\\
\end{enumerate}

Clearly, the assumption in Kleinbock's theorem requires that the rank of $A$ must be 1, which is rather restrictive. For this reason
Kleinbock went on to remark that 
\begin{quote}
    ``\emph{it would be very interesting to find out whether [his theorem] can be extended to the cases when the rank of $A$ is greater than one}."
\end{quote}
Here in our first main result, we answer Kleinbock's question affirmatively for all affine subspaces. 
\begin{thm} \label{ExtremalCondition}
Let $\mathcal{L}$ be an affine subspace parametrized by $A$ as in \eqref{AffineSubspace}. Then we have the following equivalence:
$$
\mathcal{L}\text{ is extremal}\iff
\omega(A)\le n.
$$
\end{thm}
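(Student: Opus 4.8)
The proof splits into the two implications. The implication ``$\omega(A)>n\Rightarrow\mathcal{L}$ not extremal'' is soft; the converse is the technical heart and is where the large sieve enters.

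\emph{If $\omega(A)>n$ then $\mathcal{L}$ is not extremal.} The plan is to show that \emph{every} point of $\mathcal{L}$ lying over a fixed bounded box already has simultaneous exponent bounded below by an absolute constant exceeding $1/n$. Pick $\omega'\in(n,\omega(A)]$ and infinitely many $\bm{q}\in\Z^m$ with $\|A\bm{q}^{\mathrm{T}}\|<|\bm{q}|^{-\omega'}$; for each such $\bm{q}$ let $\bm{p}_{\bm{q}}=(p_0,\bm{p}')\in\Z\times\Z^d$ be the nearest integer vector to $A\bm{q}^{\mathrm{T}}$, and set $\bm{n}_{\bm{q}}:=(-\bm{p}',\bm{q})\in\Z^n$. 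For $\bm{x}$ in the box and $\bm{y}=(\bm{x},\widetilde{\bm{x}}A)$, the identities $(\widetilde{\bm{x}}A)\bm{q}^{\mathrm{T}}=\widetilde{\bm{x}}(A\bm{q}^{\mathrm{T}})$ and $\widetilde{\bm{x}}\,\bm{p}_{\bm{q}}^{\mathrm{T}}=p_0+\bm{x}(\bm{p}')^{\mathrm{T}}$ give $\|\bm{y}\cdot\bm{n}_{\bm{q}}^{\mathrm{T}}\|\ll|\bm{q}|^{-\omega'}$, and since $|\bm{n}_{\bm{q}}|\asymp|\bm{q}|\to\infty$ the implied constant is harmless; hence $\omega(\bm{y})\ge\omega'>n$ for all such $\bm{y}$. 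Khintchine's transference \eqref{TransferPrinciple} then yields $\sigma(\bm{y})\ge\omega'/((n-1)\omega'+n)>1/n$, the last step because $t\mapsto t/((n-1)t+n)$ is increasing and equals $1/n$ at $t=n$. Fixing any $\tau$ with $1/n<\tau<\omega'/((n-1)\omega'+n)$, the whole box lies inside $\mathcal{S}_n(\tau)\cap\mathcal{L}$, so $|\mathcal{S}_n(\tau)\cap\mathcal{L}|_{\mathcal{L}}>0$ and $\mathcal{L}$ is not extremal.

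\emph{If $\omega(A)\le n$ then $\mathcal{L}$ is extremal.} Fix $\tau>1/n$; covering $\R^d$ by unit boxes, it suffices to show $|\{\bm{x}\in[0,1]^d:(\bm{x},\widetilde{\bm{x}}A)\in\mathcal{S}_n(\tau)\}|=0$. A solution of $\|q(\bm{x},\widetilde{\bm{x}}A)\|<q^{-\tau}$ corresponds, up to constants, to a rational point $(\bm{a}/q,\bm{b}/q)$ of denominator $\asymp q$ lying within $\asymp q^{-1-\tau}$ of $\mathcal{L}$; for fixed $q$ this forces $\|(q,\bm{a})A\|\ll q^{-\tau}$, leaves $O(1)$ choices for $\bm{b}$, and confines $\bm{x}$ to a box of volume $\ll q^{-d(1+\tau)}$. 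Grouping $q$ into dyadic blocks $[Q,2Q)$, the relevant set of $\bm{x}$ in such a block has measure $\ll Q^{-d(1+\tau)}\mathcal{N}_A(Q,Q^{-1-\tau})$, where $\mathcal{N}_A(Q,\delta)$ is the number of rationals of denominator $\asymp Q$ in a fixed box within $\delta$ of $\mathcal{L}$; a convergent Borel--Cantelli argument finishes the proof once $\sum_{Q\ \mathrm{dyadic}}Q^{-d(1+\tau)}\mathcal{N}_A(Q,Q^{-1-\tau})<\infty$. For Lebesgue-generic $A$ one has $\mathcal{N}_A(Q,\delta)\asymp Q^{d+1}(Q\delta)^m$, which makes the series behave like $\sum Q^{1-n\tau}$ and hence converge exactly when $\tau>1/n$. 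The difficulty is that this generic bound can fail badly for special $A$ that are nevertheless extremal (for instance certain subspaces with partly rational defining data carry far too many rationals nearby), so what is really needed is an unconditional estimate of the shape $\mathcal{N}_A(Q,\delta)\ll Q^{d+1}(Q\delta)^m+\mathrm{error}(A)$ with an error term governed by the diophantine exponent of $A$.

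This last estimate is exactly what the multidimensional large sieve inequality and its dual form should deliver: they bound the clustering of rationals near $\mathcal{L}$ in terms of $\omega(A)$, and under the hypothesis $\omega(A)\le n$ the resulting error, summed over dyadic blocks (with harmless $Q^{\varepsilon}$ losses, absorbed since $\tau>1/n$ strictly), keeps the Borel--Cantelli series convergent. I expect the proof of this counting estimate --- controlling rational points near a possibly very degenerate affine subspace via the dual large sieve --- to be the main obstacle; once it is in hand, Theorem~\ref{ExtremalCondition}, and its embedding into the more general Khintchine--Jarn\'{i}k results, follows routinely.
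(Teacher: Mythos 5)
Your architecture matches the paper's, and your forward direction is complete: the construction of $\bm{n}_{\bm{q}}=(-\bm{p}',\bm{q})$ from a solution of $\|A\bm{q}^{\mathrm{T}}\|<|\bm{q}|^{-\omega'}$, giving $\|\bm{y}\cdot\bm{n}_{\bm{q}}^{\mathrm{T}}\|\ll|\bm{q}|^{-\omega'}$ uniformly for $\bm{y}\in\mathcal{L}$ over a bounded box and hence $\sigma(\bm{y})>1/n$ by transference, is exactly the paper's Lemma \ref{WellAppr} combined with \eqref{TransferPrinciple} (take $\omega'$ strictly inside $(n,\omega(A))$, since the supremum need not be attained and may be infinite). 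Likewise your reduction of the converse to a dyadic Borel--Cantelli argument driven by a count of rational points near $\mathcal{L}$ is precisely the paper's route.

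The gap is that the counting estimate you reduce to is never proved: you state that the large sieve ``should deliver'' a bound of the shape $\mathcal{N}_A(Q,\delta)\ll Q^{d+1}(Q\delta)^m+\mathrm{error}(A)$ and defer it as ``the main obstacle''. That estimate is the entire technical content of the theorem. What the paper actually proves (Theorem \ref{UpperBound-Phi}) is obtained by detecting each of the $m$ conditions $\|(q,\bm{a}+\bm{\theta}^{(1)})\cdot\mathrm{col}_v(A)-\theta_{d+v}\|<\delta$ with a Fej\'er kernel of length $J=\lfloor(2\delta)^{-1}\rfloor$ (Lemma \ref{Fejer}), expanding, and observing that the resulting frequency points $A\bm{j}^{\mathrm{T}}$ with $|\bm{j}|\le J$ are $\phi(J)$-separated modulo $1$ exactly because $A$ is $\phi$-badly approximable; the dual large sieve (Lemma \ref{DualLargeSieve}) then yields $N_A(Q,\delta,\bm{\theta})\ll\delta^m\max\{Q,\,\phi(\delta^{-1})^{-1}\}^{d+1}$. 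Note this is a maximum rather than ``main term plus harmless error'': with $\delta\asymp Q^{-\tau}$ and $\omega(A)=\omega\le n$, the second entry dominates once $\tau>1/\omega$, the dyadic series becomes $\sum Q^{1-d\tau-m/\omega+\varepsilon}$, and convergence rests on $d\tau+m/\omega>d/n+m/n=1$ --- so the hypothesis $\omega(A)\le n$ enters the convergence check quantitatively, not merely through absorbing $Q^{\varepsilon}$ losses. A smaller omission: covering $\R^d$ by unit boxes replaces $A$ by $A_{\bm{k}}$ (left multiplication by a unimodular integer matrix), and one must check $\omega(A_{\bm{k}})=\omega(A)$; this is Lemmas \ref{Shift} and \ref{Transform} in the paper.
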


Theorem \ref{ExtremalCondition} gives a clear-cut criterion for an affine subspace to be extremal. Therefore, when combining it with Kleinbock's inheritance principle discussed above,  we have an explicit and complete classification of all extremal  manifolds in $\R^n$.

\begin{rem}
It is interesting to note that Theorem \ref{ExtremalCondition} even holds for zero-dimensional affine subspaces. In this case,   $\mathcal{L}$ is simply a point, and we may think that the parametrization matrix $A$ of $\mathcal{L}$ is given by a $1\times n$ matrix/row vector $\bm{\beta}\in\R^n$ which represents the coordinates of this point. As the extremality of $\mathcal{L}=\{\bm{\beta}\}$ is then defined with respect to the delta measure supported at $\bm{\beta}$, to say $\mathcal{L}=\{\bm{\beta}\}$ is extremal just means exactly $\sigma(\bm{\beta})=1/n$. On the other hand, note that the condition $\omega(\bm{\beta})\le n$ actually becomes $\omega(\bm{\beta})= n$ since the reverse inequality is true by the Dirichlet bound \eqref{DirichletExponent}. So in this case, Theorem \ref{ExtremalCondition} simply reduces to the statement that $\sigma(\bm{\beta})=1/n$ if and only if $\omega(\bm{\beta})=n$, which is a well known consequence of Khintchine's transference principle  \eqref{TransferPrinciple}. 
\end{rem}

\begin{rem}
Equipped with Theorem \ref{ExtremalCondition}, we may revisit Schmidt's example \eqref{LineInRn} and confirm that the correct extremal criterion there should be $\omega{\bm{\beta}\choose\bm{\alpha}}\le n$, since the parametrization matrix $A$ for the line in \eqref{LineInRn} is exactly ${\bm{\beta}\choose\bm{\alpha}}\in\R^{2\times(n-1)}$. Note that by the definition of diophantine exponent $\omega{\bm{\beta}\choose\bm{\alpha}}\le \min\{\omega(\bm{\alpha}),\omega(\bm{\beta})\}$, and the typical value of $\omega{\bm{\beta}\choose\bm{\alpha}}$ is $\frac{n-1}2$. So we have indeed greatly relaxed Schmidt's condition as promised. 
\end{rem}

It has been observed by Kleinbock that the forward implication in Theorem \ref{ExtremalCondition} can be proved by a short elementary argument \cite[Lemma 5.4]{Kle08}. Therefore it is the reverse implication there that consists the main substance of the theorem. Actually in Kleinbock's dynamical approach, some ``higher order" exponents $\omega_j(A)$, $j=1,2,\ldots, m$, come up very naturally. In particular, the first one $\omega_1(A)$ of those coincides with the ``regular" exponent $\omega(A)$ that we have defined earlier. With these higher order exponents in hand, Kleinbock has established the following criterion 
\begin{equation}\label{KleinbockCondition}
\mathcal{L}\text{ is extremal}\iff
\max_{1\le j\le m}\omega_j(A)\le n.
\end{equation}
Since the definition of these higher order exponents are rather technical and they are not used in our proofs, we refrain ourselves from reproducing them here for conciseness and refer the interested readers to Kleinbock's papers \cite{Kle03,Kle08} for more details about them.   In the codimension one case (K3), Kleinbock's criterion \eqref{KleinbockCondition} just becomes $\omega(A)\le n$ since there is only one exponent to start with.   In the other two cases (K1) and (K2) or more generally when all rows of $A$ are rational multiples of one row, Kleinbock relates them to zero dimensional affine subspaces (namely points) by showing that $\omega_j(A)$ is invariant under a certain row reduction process, and then verifies directly that $\omega_j(A)\le \omega(A)$ when $A$ is a row vector (i.e. $\mL_A$ is a point). In other words, Kleinbock has proved that for those special cases the higher order exponents do not play any role in the maximum on the right hand side of \eqref{KleinbockCondition} and therefore are redundant. Indeed, he has speculated that the sole condition $\omega(A)\le n$ might be enough to prove the reverse implication.  Needless to say, these higher order exponents are very difficult to compute in the general case, as evidenced by some more comments of Kleinbock:
\begin{quote}
    ``\emph{For matrices with no rational dependence between rows or columns the exponents of orders higher than 1 seem to be hard to understand}."
\end{quote}
\begin{quote}
    ``\emph{Even when $\det(A)=0$ $($that is, rows$/$columns of $A$ are linearly dependent over $\R$ but not over $\Q)$, the situation does not seem to be any less complicated}."
\end{quote}
Apparently, it is the presence of those higher order exponents that represents the main hurdle to reach Theorem \ref{ExtremalCondition} via Kleinbock's approach.

Now from a different perspective, we may combine Theorem \ref{ExtremalCondition} with Kleinbock's criterion \eqref{KleinbockCondition} to deduce the following corollary.

\begin{cor}\label{HigherOrderExp} Let $A\in\R^{(d+1)\times m}$ and $n=d+m$. 
If $\omega(A)\le n$, then $\omega_j(A)\le n$ for all $1\le j\le m$. 
\end{cor}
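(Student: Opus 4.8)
The plan is to obtain the corollary purely formally, by chaining together the two equivalent criteria for the affine subspace $\mathcal{L}=\mathcal{L}_A$ to be extremal. Since $\omega(A)$ is by definition the first-order exponent $\omega_1(A)$, the desired conclusion $\omega_j(A)\le n$ for all $1\le j\le m$ is literally the same as $\max_{1\le j\le m}\omega_j(A)\le n$, and it trivially implies the hypothesis $\omega(A)\le n$ (take $j=1$); so what is really being asserted here is that the two conditions $\omega(A)\le n$ and $\max_{1\le j\le m}\omega_j(A)\le n$ coincide.

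To prove this, I would first invoke Theorem \ref{ExtremalCondition}: the hypothesis $\omega(A)\le n$ implies that $\mathcal{L}$ is extremal. Then I would feed this into Kleinbock's criterion \eqref{KleinbockCondition}, which is itself an equivalence, to conclude $\max_{1\le j\le m}\omega_j(A)\le n$, and hence $\omega_j(A)\le n$ for every $1\le j\le m$, as claimed. For the degenerate case $d=0$ (where $\mathcal{L}$ is a single point) the statement either is vacuous or reduces to the observation recorded in the remark following Theorem \ref{ExtremalCondition} via the transference principle \eqref{TransferPrinciple}, so no separate argument is needed there.

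I do not expect any obstacle within the corollary itself: it is a formal deduction, and all of the mathematical content has been absorbed into Theorem \ref{ExtremalCondition}, whose reverse implication $\omega(A)\le n\Rightarrow\mathcal{L}\text{ extremal}$ is the hard part of the paper. The only point worth flagging is interpretive rather than technical: at the level of \emph{content} the deduction runs one way, asserting that the higher-order exponents $\omega_2(A),\ldots,\omega_m(A)$ are always redundant in \eqref{KleinbockCondition} and thereby confirming Kleinbock's speculation that the single condition $\omega(A)\le n$ already suffices.
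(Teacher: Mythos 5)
Your proposal is correct and is exactly the paper's argument: the hypothesis $\omega(A)\le n$ gives extremality of $\mathcal{L}_A$ via Theorem \ref{ExtremalCondition}, and then Kleinbock's equivalence \eqref{KleinbockCondition} yields $\max_{1\le j\le m}\omega_j(A)\le n$. Nothing further is needed.
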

This corollary is a purely algebraic fact, since the definition of $\omega_j(A)$ involves some complicated multilinear algebra and a priori has nothing to do with extremal manifolds. Interestingly, we prove this result through the two extremality criteria of the affine subspace parametrized by $A$, which are established by totally different methods. As mentioned previously, Kleinbock's criterion \eqref{KleinbockCondition} is proved by dynamical methods, while we will see later that Theorem \ref{ExtremalCondition} is eventually proved by Fourier analytic methods.

Recall that $\omega(A)=\frac{m}{d+1}$ for almost all $A\in\R^{(d+1)\times m}$, so the condition $\omega(A)\le n$ in Theorem \ref{ExtremalCondition} is highly typical. Actually much more is true, as we may use Theorem \ref{ExtremalCondition} to compute the Hausdorff dimension of the set of non-extremal affine subspaces with the help of a formula of Bovey and Dodson \cite{BD86}
$$
\dim\left\{A\in\R^{(d+1)\times m}\bigm|\omega(A)>n\right\}=(d+1)m-d.
$$
\begin{cor}\label{NonExtremalDim}
The set of  $d$-dimensional non-extremal affine subspaces of $\R^n$ has Hausdorff dimension $(d+1)m-d$.
\end{cor}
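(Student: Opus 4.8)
The plan is to combine Theorem~\ref{ExtremalCondition} with Dodson's formula, the only real work being to transport a Hausdorff--dimension statement about matrices into one about affine subspaces. To make sense of the assertion, first fix a metric on the set $\mathrm{AGr}(d,n)$ of $d$-dimensional affine subspaces of $\R^n$: regard it as a smooth manifold of dimension $(d+1)m$ and take the distance coming from any Riemannian structure, or equivalently from any smooth embedding into a Euclidean space. All such choices are locally bi-Lipschitz equivalent, so the Hausdorff dimension of a subset of $\mathrm{AGr}(d,n)$ is well defined. By Theorem~\ref{ExtremalCondition}, an affine subspace of the form $\mathcal{L}_A$ is non-extremal precisely when $\omega(A)>n$, and Dodson's formula asserts that the parameter set $\{A\in\R^{(d+1)\times m}\mid\omega(A)>n\}$ has Hausdorff dimension $(d+1)m-d$. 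It therefore remains to check that the parametrization $A\mapsto\mathcal{L}_A$ preserves Hausdorff dimension on this set.

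Next I would note that $A\mapsto\mathcal{L}_A$ is a diffeomorphism from $\R^{(d+1)\times m}$ onto the open subset $U\subseteq\mathrm{AGr}(d,n)$ consisting of those affine subspaces whose linear part projects isomorphically onto the first $d$ coordinates; this is exactly the standard graph chart on the affine Grassmannian, read together with its inverse. In particular the map and its inverse are Lipschitz on every bounded subset of their respective domains. Since Hausdorff dimension is invariant under bi-Lipschitz maps and stable under countable unions, decomposing $\R^{(d+1)\times m}$ into a countable union of balls shows that $A\mapsto\mathcal{L}_A$ sends $\{A\mid\omega(A)>n\}$ onto a subset of $U$ of Hausdorff dimension exactly $(d+1)m-d$.

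Finally I would remove the restriction to the single chart $U$. Extremality of an affine subspace is invariant under permuting the coordinates of $\R^n$, because $\mathcal{S}_n(\tau)$ is; and every $d$-dimensional affine subspace is carried into $U$ by a suitable coordinate permutation, since its linear part, spanned by the rows of a rank-$d$ matrix in $\R^{d\times n}$, has a nonsingular $d\times d$ submatrix. Hence $\mathrm{AGr}(d,n)$ is covered by the finitely many charts obtained from $U$ by coordinate permutations, and within each of them the non-extremal subspaces form a set of Hausdorff dimension $(d+1)m-d$ by the previous paragraph; finite stability of Hausdorff dimension then yields the claim. (For $d=0$ the statement is immediate, as $\mathrm{AGr}(0,n)=\R^n$ and $A\mapsto\mathcal{L}_A$ is the identity.) I do not foresee a genuine difficulty: the one point needing care is the passage between the unbounded matrix parameter space and the manifold of affine subspaces, which is handled by the local bi-Lipschitz property of the graph chart together with the countable stability of Hausdorff dimension.
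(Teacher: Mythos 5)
Your argument is correct and follows exactly the route the paper intends: the paper derives Corollary~\ref{NonExtremalDim} by combining Theorem~\ref{ExtremalCondition} with Dodson's formula and leaves the chart/bi-Lipschitz bookkeeping implicit, which is precisely what you have filled in. No issues.
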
 

Since the space of all $d$-dimensional affine subspaces of $\R^n$, which can be identified with the Grassmannian $\mathbf{Gr}(d+1,n+1)$ of $(d+1)$-dimensional linear subspaces of $\R^{n+1}$ through homogenization, has dimension $(d+1)m$,   Corollary \ref{NonExtremalDim} simply infers that the non-extremal ones form a subset of codimension $d$ in this Grassmanian manifold.  
\begin{rem}
Observe that $\mathcal{L}$ is contained in a rational hyperplane if and only if $\|A\bm{q}^\mathrm{T}\|=0$ for some nonzero $\bm{q}\in\Z^m$. Clearly such an $\mathcal{L}$ must not be extremal since $\omega(A)=\infty$. It is then interesting to note that the set of such affine subspaces has Hausdorff dimension $(d+1)(m-1)$, which is exactly one less than $(d+1)m-d$. 
\end{rem}

\begin{rem}
After the completion of this paper, it was pointed out by Dmitry Kleinbock to us that Theorem \ref{ExtremalCondition} had been proved by David Simmons in an unpublished note. While his work is not available publicly, a similar argument has appeared in \cite[Lemma 3.4]{SY}. We are thankful to Dmitry Kleinbock for the communication in this regard and for sending us Simmons' note. 
Simmons has proved exactly that the higher order exponents in \eqref{KleinbockCondition} are redundant as speculated by Kleinbock. Our approach to Theorem \ref{ExtremalCondition} is completely different and actually we deduce Theorem \ref{ExtremalCondition} as a very special case of Theorem \ref{DiophantineExp} below, which is further derived from our novel counting results about the number of rational points close to an affine subspace (see \S\ref{CountingResults}).
\end{rem}

\subsection{Diophantine exponents of affine subspaces}
For extremal submanifolds of $\R^n$, almost all points have simultaneous diophantine exponent $1/n$. In general,
for a submanifold $\mathcal{M}$ of $\R^n$, we shall define its simultaneous diophantine exponent with a fixed inhomogeneous shift. To that end,  
for any $\bm{\theta}\in\R^n$, we may extend \eqref{TauAppr} to define the set of inhomogeneously $\psi$-approximable points as follows
\begin{equation}\label{InhomoPsiAppr}
\mathcal{S}_n(\psi,\bm{\theta}):=\left\{\bm{y}\in\R^n: \|{q}\bm{y}-\bm{\theta}\|<\psi(q) \textrm{ for infinitely many }{q}\in\N\right\}
\end{equation}
where  $\psi:\N\rightarrow\R_{\ge0}$ is going to be called an \emph{approximation function}. In the homogeneous case $\bm{\theta}=\bm{0}$ we simply write $\mathcal{S}_n(\psi)$ for $\mathcal{S}_n(\psi,\bm{0})$. As usual, when $\psi(q)=q^{-\tau}$ we also write $\mathcal{S}_n(\tau,\bm{\theta})$ for $\mathcal{S}_n(\psi,\bm{\theta})$. This abuse of notation should cause no ambiguity. 
Then we consider the inhomogeneous exponent
$$
\sigma(\mathcal{M},\bm{\theta}):=\sup\left\{\tau\in\R:|\mathcal{S}_n(\tau,\bm{\theta})\cap\mathcal{M}|_\mathcal{M}>0\right\},
$$
and in particular when $\bm{\theta}=\bm{0}$ we denote $
\sigma(\mathcal{M}):=\sigma(\mathcal{M},\bm{0})$. With this definition in mind, clearly $\mathcal{M}$ being extremal is equivalent to $\sigma(\mathcal{M})=1/n$. 

In 2009, Y. Zhang \cite{Zha09} proved that for any affine subspace $\mathcal{L}$ and its nondegenerate submanifold $\mathcal{M}$
$$
\sigma(\mathcal{M})=\sigma(\mathcal{L})=\inf\left\{\sigma(\bm{y})\mid\bm{y}\in\mathcal{L}\right\}=\inf\left\{\sigma(\bm{y})\mid\bm{y}\in\mathcal{M}\right\}.
$$
The information this formula conveys is twofold. First of all, it says that nondegenerate submanifolds inherit the exponent from their ``parent" affine subspaces. Secondly,   the typical behavior of $\sigma(\bm{y})$ for $\bm{y}\in\mathcal{M}$ actually coincides with the worst case scenario (the least value).  Zhang has also found explicit formulae for $\sigma(\mathcal{L})$ for the following two cases:
\begin{enumerate}[(Z1)]
    \item 
 $\mathcal{L}$ is a hyperplane ($A$ is a $n\times1$ matrix):
$$
\sigma(\mathcal{L})=\max\left\{\frac1n,\frac{\omega(A)}{n+(n-1)\,\omega(A)}\right\};
$$    
    
    \item
    $\mathcal{L}$ is a line through the origin in $\R^3$ ($A$ is a $2\times2$ matrix with the first row zero):
$$
\sigma(\mathcal{L})=\max\left\{\frac13,\frac{\sigma(A)}{2+\sigma(A)},\frac{\omega(A)}{3+2\,\omega(A)}\right\},
$$
where $\sigma(A):=\omega(A^\mathrm{T})$.
\end{enumerate}

In the theorem below we are able to obtain a uniform upper bound for all inhomogeneous exponents $\sigma(\mathcal{L},\bm{\theta})$ of an arbitrary affine subspace $\mathcal{L}$.

\begin{thm}\label{DiophantineExp}
Let $\mathcal{L}$ be an affine subspace parametrized by $A$ as in \eqref{AffineSubspace}, and suppose that $\omega(A)=\omega$ and $\sigma(A)=\sigma$. Then for any $\bm{\theta}\in\R^n$ we have
\begin{equation}\label{SigmaUpperBound}
\sigma(\mathcal{L,\bm{\theta}})\le \min\left\{ \frac1d\left(1-\frac{m}{\max\{n,\omega\}}\right),\frac{m\sigma-d}n\right\}.
\end{equation}
In particular, this implies the equivalence
 \begin{equation}\label{ExponentEquivalence}
 \omega(A)\le n \iff
\sigma(\mathcal{L,\bm{\theta}})=\frac1n \;\;\text{for all }\bm{\theta}.
\end{equation}
\end{thm}

 Let us explain how the equivalence \eqref{ExponentEquivalence} in Theorem \ref{DiophantineExp}, of which Theorem \ref{ExtremalCondition} is a special case ($\bm{\theta}=\bm{0}$), follows from the upper bound \eqref{SigmaUpperBound} on $\sigma(\mL,\bm{\theta})$. 
 Note that
in the homogeneous case, one also has a lower bound 
\begin{equation}\label{SigmaLowerBound}
 \sigma(\mathcal{L})\ge\max\left\{\frac1n,\frac{\omega(A)}{n+(n-1)\,\omega(A)}\right\}.   
\end{equation}
The first lower bound $1/n$ is obviously due to Dirichlet's theorem, while the second lower bound in \eqref{SigmaLowerBound} is a special case of our Lemma \ref{WellAppr} below (see also \cite[Corollary 15 and Theorem 16]{Zha09} for a different proof). In particular, \eqref{SigmaLowerBound} implies that if $\omega(A)>n$ then $\sigma(\mL)>1/n$. On the other hand, the first upper bound in \eqref{SigmaUpperBound} implies that if $\omega(A)\le n$ then $\sigma(\mL,\bm{\theta})\le\frac1d(1-\frac{m}n)= \frac1n$. So we arrive at the equivalence
$$
\omega(A)\le n\iff \sigma(\mL)=1/n,
$$
which is nothing but Theorem \ref{ExtremalCondition}. Moreover the equivalence \eqref{ExponentEquivalence} immediately follows on invoking an inhomogeneous transference principle of Beresnevich and Velani \cite[Theorem 4(A)]{BV10} which states in our situation that
$$
\sigma(\mathcal{L})=1/n\iff \sigma(\mathcal{L,\bm{\theta}})=1/n \;\;\text{for all }\bm{\theta}.
$$

It is worth noting that our argument actually gives an independent proof of the above inhomogeneous transference principle in the case of affine subspaces. 
To see that  $ \sigma(\mathcal{L,\bm{\theta}})=1/n$ follows from $\sigma(\mathcal{L})=1/n$ (equivalently  $\omega(A)\le n$), we actually only need the lower bound $\sigma(\mathcal{L,\bm{\theta}})\ge1/n$ from an earlier work of Bugeaud and Laurent \cite{BL05} which is also used in \cite{BV10}, as the other direction $\sigma(\mathcal{L,\bm{\theta}})\le1/n$ is already a consequence of our upper bound \eqref{SigmaUpperBound} as mentioned above.

So the main substance of Theorem \ref{DiophantineExp} is the upper bound \eqref{SigmaUpperBound}, whose proof will be taken up in \S\ref{ProofDiophantineExp}.

\begin{rem} 
Since the the exponent of a point on $\mathcal{L}$ should be no smaller than that of  the point in $\R^d$ lying down below, it is clear that $\sigma(\mathcal{L})\le1/d$, which is reflected in Theorem \ref{DiophantineExp}.
It is also interesting to ask when the equality $\sigma(\mL)=1/d$ occurs. In the hyperplane example (Z1) above,  this happens exactly when $\omega(A)=\infty$; but in (Z2), the condition instead becomes $\sigma(A)=\infty$. It is not clear what the sufficient and necessary condition for $\sigma(\mL)=1/d$ is  in general. 
\end{rem}

\begin{rem}\label{SigmaFormula}
The precise behavior of $\sigma(\mathcal{L})$ for general affine subspaces remains elusive. The author unfoundedly suspects that   $\sigma(\mathcal{L})$ is always defined by $m+1$ pieces and that it likely would involve exponents other than $\omega(A)$ and $\sigma(A)$ when $m>2$.
\end{rem}

\section{Khintchine-Jarn\'{i}k type theorems}\label{KhintchineJarnik}

\subsection{Khintchine type manifolds}
 A well known theorem of Khintchine \cite{Khi26} provides a beautiful zero-one law for the set of $\psi$-approximable points $\mathcal{S}_n(\psi)$ with any decreasing (meaning non-increasing in this paper) approximation function $\psi$: almost no/all points in $\R^n$ are $\psi$-approximable in the sense of Lebesgue measure whenever the series $\sum \psi(q)^n$ converges/diverges. Motivated by this theorem and the definition of extremal manifolds, one may as well study Khintchine type manifolds (a terminology first put forward in \cite{BD99}).
 \begin{defn} Let $\mathcal{M}$ be a submanifold of $\R^n$. Then we say $\mathcal{M}$ is of \emph{Khintchine type} if for any decreasing  approximation function $\psi$ the set $\mathcal{S}_n(\psi)\cap\mathcal{M}$ satisfies the following dichotomy
  $$
| \mathcal{S}_n(\psi)\cap\mathcal{M}|_\mathcal{M}=\left\{\begin{array}{cll}
\text{N{\tiny ULL}}& \text{when}&\sum\psi(q)^n<\infty;\\
  \text{F{\tiny ULL}}& \text{when}&\sum \psi(q)^n=\infty.
 \end{array}
 \right.
 $$
 Here `F{\tiny ULL}' means the complement in $\mathcal{M}$ is a null set. 
 Also for convenience we say $\mathcal{M}$ is  of \emph{Khintchine type for convergence/divergence} if it satisfies the convergence/divergence statement above. 
 \end{defn}
 
 Clearly, a necessary condition for a manifold to be of Khintchine type for convergence is that it has to be extremal, since the latter simply corresponds to taking $\psi(q)=q^{-\tau}$ in the definition of Khintchine type manifolds. This means that the Khintchine type theory is deeper than the extremality theory in \S\ref{ExtremalManifolds}. 
 
 The two cases in the Khintchine type theory are usually proved by different approaches in the literature. For nondegenerate manifolds, the divergence case was settled first for planar curves by Beresnevich, Dickinson, and Velani \cite{BDV07} and later for manifolds in arbitrary dimensions by Beresnevich \cite{Ber12}; the convergence case was proved first for planar curves by Vaughan and Velani \cite{VV06} and for general nondegenerate manifolds  in a very recent breakthrough of Beresnevich and Yang \cite{BY23}. The interested readers are referred to the above papers and the references therein for more details about these spectacular works and various results predating them.

 \subsubsection{Khintchine type affine subspaces}
Next, let's turn our focus to the other stream. Unlike the situation for nondegenerate manifolds, the Khintchine type theory remained rather ad hoc for affine subspaces until recently. In 2000, Kovalevskaya showed that lines passing through the origin $\mathcal{L}_{\bm{\alpha},\bm{0}}$ (using the notation in \eqref{LineInRn}) is of Khintchine type for convergence if $\omega(\bm{\alpha})<n$ \cite{Kov00}.  Ram\'{i}rez studied coordinate hyperplanes (which are hyperplanes with one fixed coordinate) in 2015 \cite{Ram15}, and proved among other things that they are of Khintchine type for convergence if $\omega(\beta)<\frac{\sqrt{5}+1}2$, where $\beta$ is the value of the fixed coordinate. Note that in our notation, the parametrization matrix of a coordinate hyperplane is given by $A=(\beta,\bm{0})^\mathrm{T}\in\R^{n\times 1}$, so clearly $\omega(A)=\omega(\beta)$ in this case. Two years later, Ram\'{i}rez, Simmons, and S\"{u}ess  proved that coordinate lines $\mathcal{L}_{\bm{0},\bm{\beta}}$ are of Khintchine type\footnote{They actually only proved the divergence case, but their argument can be used to prove the convergence case as well in a straightforward manner.} if $\omega(\bm{\beta})<n$ \cite{RSS17}. In the same paper, they have also shown that all affine coordinate subspaces of dimension at least two are  of Khintchine type for divergence, absent any diophantine requirement. In 2021, Jason Liu\footnote{An 11th grader at the Davidson Academy when the paper was written.} and the author established the first general result addressing affine subspaces in arbitrary dimensions using a certain semi-multiplicative diophantine exponent \cite{HL21}. More precisely,  for $A\in\R^{d\times m}$
let $\omega^{\times}(A)$ be the supremum of all $\omega$ for which
$$
\left(\prod_{u = 1}^d\|\bm{q}\cdot\mathrm{row}_u(A)\|\right)^{1/d}<|\bm{q}|^{-\omega}
 $$
 has infinitely many solutions in $\bm{q}\in\Z^m$. It is proved in \cite{HL21} that  an affine subspace $\mathcal{L}$ of $\R^n$, parametrized by $A$ as in \eqref{AffineSubspace}, is of Khintchine type for convergence if $\min\{\omega^\times(A),\omega^\times(A')\}<n$. Soon after, by feeding some key counting arguments of \cite{HL21} into the ubiquity framework developed in \cite{BDV06}, Alvey proved the divergence counterpart under the stronger assumption  $\omega^{\scriptstyle\times}(A')<n$ \cite{Alv21}.

It does not take long to realize that $\omega^\times(A)$ is not the ``correct" exponent for this problem. For example, there are plenty of Khintchine type affine subspaces with $\omega^\times(A)=\infty$. This is because as long as one row of $A$ together with 1 is linearly dependent over $\Q$, it automatically forces  $\omega^\times(A)=\infty$ regardless what other rows are doing. In particular, all hyperplanes with at least one rational coefficient satisfy $\omega^\times(A)=\infty$, but we have already seen that some  coordinate hyperplanes are of Khintchine type.  The reason  $\omega^\times(A)$ in lieu of $\omega(A)$ is used in \cite{HL21} is purely due to some technical requirements by the method there. It transpires, from our next main result below, that $\omega(A)$ is still the right exponent to describe the Khintchine type theory for arbitrary affine subspaces, and moreover we  obtain the correct ``Khintchine threshold".

\begin{thm}\label{KT} Let $\mathcal{L}$ be an affine subspace parametrized by $A$ as in \eqref{AffineSubspace}, and suppose that $\omega(A)\neq n$. Then we have the following equivalence
$$
\mathcal{L}\text{ is of Khintchine type}\iff \omega(A)<n.
$$
\end{thm}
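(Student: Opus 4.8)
The forward implication is immediate. If $\mathcal{L}$ is of Khintchine type then it is in particular of Khintchine type for convergence, so taking $\psi(q)=q^{-\tau}$ with $\tau>1/n$ (for which $\sum_q\psi(q)^n=\sum_q q^{-\tau n}<\infty$) gives $|\mathcal{S}_n(\tau)\cap\mathcal{L}|_{\mathcal{L}}=0$ for every $\tau>1/n$, i.e.\ $\mathcal{L}$ is extremal, whence $\omega(A)\le n$ by Theorem~\ref{ExtremalCondition}; combined with $\omega(A)\neq n$ this forces $\omega(A)<n$. The substance is the reverse implication, and the plan is to reduce it to a counting problem and then run a Borel--Cantelli scheme. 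Fix a bounded cube $\mathcal{I}\subset\R^d$; it suffices to prove the dichotomy for $\bm x\in\mathcal{I}$. Writing a point of $\mathcal L$ as $\bm y(\bm x)=(\bm x,\bm\beta+\bm x A')$, one checks that $\bm x$ is $\psi$-approximable iff $\bm x\in\limsup_{q\to\infty}F_q$, where $F_q$ is the union over $\bm v=(q,\bm p_1)\in\Z^{d+1}$ with $\bm p_1/q\in\mathcal I$ and $\|\bm v A\|\ll\psi(q)$ (here $\bm v A=q\bm\beta+\bm p_1 A'$) of the cells
\[
R_{\bm v}:=\bigl\{\bm x\in\mathcal I:\ |q\bm x-\bm p_1|<\psi(q),\ \ |(q\bm x-\bm p_1)A'|\ll\psi(q)\bigr\}.
\]
A routine computation in the variable $\bm u=q\bm x-\bm p_1$ shows that every such cell has measure $\asymp_{A'}(\psi(q)/q)^d$ and that cells attached to the same $q$ are disjoint. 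Thus everything reduces to estimating, after a standard dyadic decomposition (with a further splitting of each block $q\sim Q$ to keep $\psi$ of constant order), the counting function
\[
N(Q):=\#\bigl\{\bm v=(q,\bm p_1)\in\Z^{d+1}:\ q\sim Q,\ \bm p_1/q\in\mathcal I,\ \|\bm v A\|\ll\psi(Q)\bigr\}
\]
together with the analogous pair-correlation sum $\sum_{\bm v,\bm v'}|R_{\bm v}\cap R_{\bm v'}|$ across distinct denominators.

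The key step is to expand the condition $\|\bm v A\|\ll\psi(Q)$ into a Fourier series over $\Z^m$ and carry out the summation over $q$ \emph{together with} $\bm p_1$; it is this joint summation, rather than the summation over $\bm p_1$ alone, that produces the full matrix $A$ (and hence $\omega(A)$ rather than just $\omega(A')$), because $\sum_{\bm v\in\mathcal B}e(\bm v A\bm t^{\mathrm T})$ is sizeable only when $A\bm t^{\mathrm T}$ is close to $\Z^{d+1}$. One arrives at $N(Q)=Q^{d+1}\psi(Q)^m+\mathcal E(Q)$ with $\mathcal E(Q)$ a sum over $\bm t\in\Z^m\setminus\{0\}$ of products $\prod_{i=0}^{d}\min\bigl(Q,\|(A\bm t^{\mathrm T})_i\|^{-1}\bigr)$ against rapidly decaying Fourier weights. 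The multidimensional large sieve inequality --- and its dual form, which also yields the matching lower bound $N(Q)\gg Q^{d+1}\psi(Q)^m$ and the pair-correlation bound --- estimates this by $\mathcal E(Q)\ll Q^{\,d+1-m/\omega(A)+\varepsilon}$. Feeding the upper bound into the convergence half gives $|F_q|\ll\psi(q)^n$ plus an error whose dyadic total, after pairing with $(\psi/q)^d$, is $\sum_k 2^{k(1-m/\omega(A)+\varepsilon)}\psi(2^k)^d$; a H\"older estimate with exponents $n/d$ and $n/m$ shows this is finite whenever $\sum_q\psi(q)^n<\infty$ --- precisely because $\tfrac{m}{\omega(A)}>\tfrac{m}{n}$, i.e.\ because $\omega(A)<n$ --- so $|\limsup_q F_q|_{\mathcal I}=0$ by the convergence Borel--Cantelli lemma.

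For the divergence half, the two-sided count gives $\sum_q|F_q|\asymp\sum_q\psi(q)^n=\infty$, while the pair-correlation estimate furnishes the quasi-independence $\sum_{q,q'\le T}|F_q\cap F_{q'}|\ll\bigl(\sum_{q\le T}|F_q|\bigr)^2$; the Chung--Erd\H os inequality then yields $|\limsup_q F_q|_{\mathcal I}>0$, and since every estimate above holds uniformly over sub-cubes of $\mathcal I$, a Lebesgue density argument upgrades positive measure to full measure. Monotonicity of $\psi$ is used only to compare $\sum_q\psi(q)^n$ with the relevant dyadic sums, which is why the counting estimates themselves (and hence the corresponding Jarn\'ik-type statements) survive without it.

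The hard part will be the sharp counting estimate for $N(Q)$ and for the pair-correlation sum: one needs an error term whose exponent crosses the summability threshold exactly at $\omega(A)=n$ --- any elementary bound would give a strictly larger, hence wrong, threshold --- and, crucially, the estimate must hold for \emph{arbitrary} $A$, with no rationality or rank hypothesis on its rows or columns. This is exactly the regime (real but not rational dependence among rows/columns, rank greater than one) that Kleinbock flagged as resistant to the dynamical approach; the large sieve dissolves it uniformly, which is the whole point of the method. The secondary, more classical obstacle is the quasi-independence needed for divergence: the flatness of $\mathcal L$ keeps the pair-correlation sum within reach of the same sieve input, but the error terms must be tracked carefully and uniformly in $\mathcal I$ for the final zero--one dichotomy. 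The exclusion $\omega(A)\neq n$ is forced by precisely these error terms, which become (logarithmically) divergent at $\omega(A)=n$.
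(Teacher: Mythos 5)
Your forward implication is fine (Khintchine type $\Rightarrow$ extremal $\Rightarrow\omega(A)\le n$), and your high-level architecture for the converse --- reduce to a counting problem, Borel--Cantelli for convergence, some quasi-independence device for divergence --- is the right shape. But the load-bearing step, the counting estimate, is not set up in a way that can work. You propose to expand the indicator of $\|\bm v A\|\ll\psi(Q)$ in a Fourier series and to bound the resulting error $\mathcal E(Q)=\sum_{\bm t\neq\bm 0}(\cdots)\prod_{i=0}^{d}\min\bigl(Q,\|(A\bm t^{\mathrm T})_i\|^{-1}\bigr)$ by $Q^{d+1-m/\omega(A)+\varepsilon}$ ``by the large sieve''. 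The hypothesis $\omega(A)=\omega$ controls only $\max_i\|(A\bm t^{\mathrm T})_i\|$: it guarantees that \emph{one} factor in that product is small, while the remaining $d$ factors can each equal $Q$ (e.g.\ when rows of $A$ vanish or are rational, as for coordinate subspaces, which are squarely within the theorem's scope). Controlling the full product over rows is precisely what forces the semi-multiplicative exponent $\omega^\times(A)$ in \cite{HL21}, and it is the obstruction this paper is built to avoid. Moreover the large sieve cannot be applied to that linearized sum: it bounds quantities of the form $\sum_r|S(\bm y^{(r)})|^2$ for well-separated points. The paper keeps the quadratic structure by detecting the inequality with a Fej\'er kernel, so the count is dominated by $J^{-2m}\sum_{(q,\bm a)}\bigl|\sum_{\bm j}c(\bm j)e((q,\bm a)A\bm j^{\mathrm T})\bigr|^2$, and then applies the \emph{dual} large sieve with the points $A\bm j^{\mathrm T}$, $|\bm j|\le J\asymp\delta^{-1}$, which are $\phi(J)$-separated mod $1$ precisely because $\|A\bm j^{\mathrm T}\|\ge\phi(|\bm j|)$ --- the sup-norm condition, hence $\omega(A)$ alone. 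This yields a pure upper bound $N_A\ll\delta^m\max\{Q,\phi(\delta^{-1})^{-1}\}^{d+1}$ (Theorem \ref{UpperBound-Phi}); there is no main-term/error-term decomposition anywhere.

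The second gap is in the divergence half. The large sieve is a one-sided inequality; it cannot ``also yield'' the matching lower bound $N(Q)\gg\delta^mQ^{d+1}$, let alone a pair-correlation bound, and you assert both without argument while your Chung--Erd\H{o}s scheme rests entirely on them. The paper obtains the lower bound from Minkowski's theorem on linear forms (Lemma \ref{Minkowski}), which covers $\mathcal U$ by balls centred at rational points near $\mathcal L$ with denominators $q\le Q$, and then uses the \emph{upper} bound to show that denominators $q\le\kappa Q$ account for at most half the measure. That is a ubiquity statement (Theorem \ref{UbiquityLowerBound}) which feeds into the Beresnevich--Dickinson--Velani machinery (Lemma \ref{UbiquitousTheorem}); no pair correlations or density argument is needed. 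So while your diagnosis of where the difficulty lies is accurate, the mechanisms you propose for both halves of the reverse implication would not close as written.
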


All the existing results to date in this regard fall into the realm of Theorem \ref{KT}. One may easily check that the diophantine conditions in \cite{Kov00,RSS17} matches that of Theorem \ref{KT} when specializing $\mathcal{L}$ to lines passing through the origin $\mathcal{L}_{\bm{\alpha},\bm{0}}$ and coordinate lines $\mathcal{L}_{\bm{0},\bm{\beta}}$ respectively, while the condition on coordinate hyperplanes in \cite{Ram15} is significantly improved from $\omega(\beta)<\frac{\sqrt{5}+1}2$ to $\omega(\beta)<n$ here. Moreover, the main results of \cite{HL21,Alv21} are covered  by Theorem \ref{KT} as well since it follows by definition that $\omega(A)\le \omega^\times(A)$ and $\omega(A)\le \omega(A')\le\omega^\times(A')$.

Theorem \ref{KT} gives a complete classification of all Khintchine type affine subspaces of $\R^n$ except for the boundary case $\omega(A)=n$. As a matter of fact, we will see that Theorem \ref{KT} has squeezed everything out of the exponent $\omega(A)$ as far as the Khintchine type theory is concerned.  

Before we go on to investigate the exceptional case $\omega(A)=n$, some comments are in order. An alert reader may have already realized that some diophantine condition is only absolutely necessary for affine subspaces to be of Khintchine type for convergence. It follows from \eqref{SigmaLowerBound}  that if $\omega(A)>n$ then  $\mathcal{L}\subset\mathcal{S}_n\left(1/n+\varepsilon\right)$ for some $\varepsilon>0$, that is to say, all points of $\mathcal{L}$ are simultaneously very well approximable and hence $\mathcal{L}$ fails to be of Khintchine type for convergence in a dramatic fashion. 
 On the other hand, there does not appear to be any real bottleneck to prevent affine subspaces from being Khintchine type for divergence, as evidenced by the aforementioned result in \cite{RSS17} on  coordinate hyperplanes of dimension at least two. In fact, as remarked in \cite{RSS17}, the inclusion $\mathcal{L}\subset\mathcal{S}_n\left(1/n+\varepsilon\right)$ is philosophically ``almost as good" as being of Khintchine type for divergence, since the former implies the latter for some ``nice" approximation functions $\psi$ (roughly speaking it  means $\psi$ decays in an orderly manner). We refer the readers to the remark below \cite[Theorem 2]{RSS17} for the precise definition and some explicit examples. 
 For the reasons outlined, it is generally believed that all affine subspaces are of Khintchine type for divergence. 
As a result, we may focus our quest for the optimal Khintchine threshold completely on the convergence side.

As has been said, the exponent $\omega(A)$ can only take us as far as Theorem \ref{KT}, since it will be soon revealed that there exist Khintchine type affine subspaces as well as  non-Khintchine type ones when $\omega(A)=n$. 
To state our next result in precise terms, we need the help of a secondary exponent $\omega'(A)$ which sees through the fine structure of $\omega(A)$ at logarithmic scales. 

\begin{defn} For a  real matrix $A\in\R^{d\times m}$ which is diophantine (i.e. $\omega(A)<\infty$), 
the \emph{logarithmic diophantine exponent} $\omega'(A)$ is defined as the supremum of the set of $\omega'\in\R$ such that, for infinitely many $\bm{q}\in\Z^m$, we have
$$
\|A\bm{q}^\mathrm{T}\|<|\bm{q}|^{-\omega(A)}(\log |\bm{q}|)^{-\omega'} .
$$
\end{defn}

Note that
here and throughout the paper, we redefine the natural logarithmic function so that $\log t=1$ when $t\le e$.
Also, it has been shown in \cite{BDV01} that with a given diophantine exponent there exist matrices with any prescribed  number as their logarithmic exponent, so the definition above is not vacuous.

 \begin{thm}\label{CriticalKTC}
Let $\mathcal{L}$ be an affine subspace parametrized by $A$ as in  \eqref{AffineSubspace} with  $\omega(A)=n$. Then
\begin{enumerate}[(a)]
  \item If $\omega'(A)<-1$, then $\mathcal{L}$ is  of Khintchine type for convergence;
    \item If $\omega'(A)>n$, then $\mathcal{L}$ is not of Khintchine type for convergence.
\end{enumerate}
\end{thm}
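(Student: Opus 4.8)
## Proof Proposal for Theorem \ref{CriticalKTC}

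The plan is to reduce both parts, via the bi‑Lipschitz parametrization of $\mathcal{L}$ by $\bm{x}$ over a box $[0,1)^d$ (a routine localization and $\Z^n$‑periodicity argument handles the passage from the box to all of $\R^d$), to the analysis of a single $\limsup$ set built from rational points close to $\mathcal{L}$. Writing $\widetilde{\bm{x}}=(1,\bm{x})$ as in \eqref{AffineSubspace} and substituting $q\bm{x}=\bm{p}_1+\bm{s}$, one sees that $B_{\bm{p},q}:=\{\bm{x}\in[0,1)^d:|q\bm{x}-\bm{p}_1|<\psi(q),\ \|q\widetilde{\bm{x}}A-\bm{p}_2\|<\psi(q)\}$ is empty unless $\bm{q}':=(q,\bm{p}_1)$ satisfies $\|\bm{q}'A\|\ll_A\psi(q)$, in which case $|B_{\bm{p},q}|_{\R^d}\ll\psi(q)^dq^{-d}$ and the number of admissible $\bm{p}_2$ is $O_A(1)$. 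Hence everything is controlled by the counting function $N_q(\delta):=\#\{\bm{p}_1\in\Z^d:|\bm{p}_1|\ll_A q,\ \|(q,\bm{p}_1)A\|<\delta\}$, the number of rationals with denominator $q$ within $\delta/q$ of $\mathcal{L}$. The engine is the paper's rational‑point counting estimate (the multidimensional large sieve and its dual); at the critical exponent $\omega(A)=n$ it should read, on a dyadic block $Q\le q<2Q$ with $\delta\asymp\psi(Q)$,
$$\sum_{Q\le q<2Q}N_q(\delta)\ \ll_A\ Q^{d+1}\delta^m\ +\ \mathcal{E}(Q,\delta),$$
where the random main term $Q^{d+1}\delta^m$ contributes exactly $\asymp_A\sum_q\psi(q)^n$ to the Borel--Cantelli sum $\sum_q N_q(C_A\psi(q))\,\psi(q)^dq^{-d}$, and the error $\mathcal{E}$ carries a logarithmic factor governed by $\omega'(A)$.

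For part (a) I would let $\psi$ be decreasing with $\sum_q\psi(q)^n<\infty$ and, by the convergence Borel--Cantelli lemma, reduce to showing $\sum_q N_q(C_A\psi(q))\,\psi(q)^dq^{-d}<\infty$. Splitting off the main term gives the convergent piece $\ll_A\sum_q\psi(q)^n$, and leaves the error contribution $\ll_A\sum_{Q\text{ dyadic}}\mathcal{E}(Q,\psi(Q))\,\psi(Q)^dQ^{-d}$. This is where $\omega'(A)<-1$ is used: since $-1>\omega'(A)$, all but finitely many $\bm{q}'$ satisfy $\|\bm{q}'A\|\ge|\bm{q}'|^{-n}\log|\bm{q}'|$, so $N_q(\delta)=0$ whenever $\delta\le q^{-n}\log q$, and for larger $\delta$ the logarithmic gap between this true lower bound and the random scale $|\bm{q}'|^{-n}$ is exactly what absorbs $\mathcal{E}$ into a convergent tail (of the borderline $\sum 1/(q\log q)$ shape after the $\psi$‑weighting). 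Hence $|\mathcal{S}_n(\psi)\cap\mathcal{L}|_\mathcal{L}=0$.

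For part (b) I would construct a decreasing $\psi$ with $\sum_q\psi(q)^n<\infty$ but $|\mathcal{S}_n(\psi)\cap\mathcal{L}|_\mathcal{L}>0$. From $\omega'(A)>n$ fix $\omega'>n$ and infinitely many $\bm{q}'_i=(q_i,\bm{p}_{1,i})$ with $\|\bm{q}'_iA\|<|\bm{q}'_i|^{-n}(\log|\bm{q}'_i|)^{-\omega'}$, each giving a rational point $\bm{p}_i/q_i$ within $q_i^{-n-1}(\log q_i)^{-\omega'}$ of $\mathcal{L}$. Because only infinitely many (not positively dense) such $\bm{q}'_i$ are available, balls around the $\bm{p}_i/q_i$ alone cannot fill a positive‑measure $\limsup$; instead I would feed each very good $\bm{q}'_i$ into a logarithmically refined version of Lemma \ref{WellAppr}. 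Running the mechanism behind \eqref{SigmaLowerBound} — approximate the free coordinate $\bm{x}$ by Minkowski/Dirichlet in $\R^d$ and compose with the very good integer vector $\bm{q}'_i$ — while tracking the logarithmic losses, one should obtain that for a.e.\ $\bm{x}\in[0,1)^d$ there are infinitely many $q$ with $\|q(\bm{x},\widetilde{\bm{x}}A)\|<q^{-1/n}(\log q)^{-\kappa}$, the exponent $\kappa$ arising from $\omega'$ with a loss of a factor at most $n$, so that $\omega'>n$ forces $\kappa n>1$. Taking $\psi(q):=q^{-1/n}(\log q)^{-\kappa}$ — monotone, with $\sum_q\psi(q)^n=\sum_q q^{-1}(\log q)^{-\kappa n}<\infty$ — one concludes that, up to a null set, the portion of $\mathcal{L}$ above $[0,1)^d$ lies in $\mathcal{S}_n(\psi)$, so $\mathcal{L}$ is not of Khintchine type for convergence.

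The main obstacle is twofold. In (a) one must sharpen the large‑sieve count at the critical scale $\delta\asymp\psi(q)$, $|\bm{q}'|\asymp q$, enough to expose the logarithmic term and verify that it is governed precisely by $\omega'(A)$ — the coarser bound behind Theorem \ref{KT} only sees whether $\omega(A)<n$ and has to be pushed to the next order. In (b) one must establish the logarithmically refined companion of Lemma \ref{WellAppr} with the correct loss (a factor at most $n$, matching the $n$ in ``$\omega'(A)>n$'') and check that the resulting $\psi$ is monotone. I expect the bookkeeping in (b) — propagating a single very good $\bm{q}'$ to an everywhere‑valid approximation rate with a controlled logarithmic loss — to be the most delicate step, since it is there that the numerology of the two logarithmic scales, and hence the gap between $-1$ and $n$ in the statement, is decided.
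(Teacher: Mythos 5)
Your part (b) is essentially the paper's argument: one fixes $a=n$, $a'=n(1+\varepsilon)$ with $n(1+\varepsilon)<\omega'(A)$, feeds the $\phi$-approximability of $A$ (with $\phi(t)=t^{-a}(\log t)^{-a'}$) into Lemma \ref{WellAppr}, and concludes $\mathcal{L}\subset\mathcal{S}_n(\psi)$ for $\psi(q)=q^{-1/n}(\log q)^{-b'}$ with $b'n>1$, whence $\sum\psi(q)^n<\infty$ but the intersection has full measure. Note that Lemma \ref{WellAppr} already \emph{is} the ``logarithmically refined'' lemma you ask for, and it is proved by Mahler's transference applied pointwise, which gives the containment for \emph{every} point of $\mathcal{L}$, not merely almost every $\bm{x}$; your numerology is consistent with it (the log exponent is divided by $(n-1)a+n=n^2$, and raising $\psi$ to the $n$-th power recovers one factor of $n$, so the threshold is exactly $a'>n$).

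Part (a), however, has two genuine gaps. First, the mechanism by which you inject the hypothesis is wrong: $\omega'(A)<-1$ constrains $\|A\bm{q}^{\mathrm{T}}\|$ for $\bm{q}\in\Z^{m}$, whereas your claimed lower bound concerns $\|\bm{q}'A\|$ for $\bm{q}'=(q,\bm{p}_1)\in\Z^{d+1}$, a quantity governed by the transposed exponent $\sigma(A)=\omega(A^{\mathrm{T}})$, which is completely unconstrained here (it may well be infinite while $\omega(A)=n$ and $\omega'(A)<-1$). So ``$N_q(\delta)=0$ whenever $\delta\le q^{-n}\log q$'' does not follow. The correct entry point (Theorem \ref{UpperBound-Phi}) is that $\omega'(A)<-1$ makes $A$ $c_0\phi$-badly approximable with $\phi(t)=t^{-n}(\log t)^{1+\varepsilon}$, which separates the frequency points $A\bm{j}^{\mathrm{T}}$ in the \emph{dual} large sieve and yields $N_A(Q,\delta)\ll\delta^m\max\{Q,\,\phi(\delta^{-1})^{-1}\}^{d+1}$. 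Second, even with this bound, your absorption of the error fails: the secondary term dominates for all $\delta$ below the crossover $\gamma(Q):=Q^{-1/n}(\log Q)^{-(1+\varepsilon)/n}$, and its contribution to the Borel--Cantelli sum, $\asymp\delta^{-nd}Q^{-d}(\log\delta^{-1})^{-(1+\varepsilon)(d+1)}$ per dyadic block, equals your borderline $(\log Q)^{-(1+\varepsilon)}$ only at $\delta=\gamma(Q)$ and blows up as $\delta$ decreases (try $\psi(q)=q^{-1}$, which is decreasing with $\sum\psi(q)^n<\infty$). The missing idea is the truncation step \eqref{Reduction}: replace $\psi$ by $\max\{\psi,\gamma\}$, which only enlarges the limsup set, preserves $\sum\psi(q)^n<\infty$ because $\sum\gamma(q)^n=\sum q^{-1}(\log q)^{-(1+\varepsilon)}<\infty$ (this is precisely where the strict inequality $\omega'(A)<-1$ is spent), and ensures the counting bound is never invoked below the crossover scale.
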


This theorem narrows down the exact Khintchine threshold within two logarithmic scales at $\omega(A)=n$, though it is not at all clear which end is closer to the truth. 

\begin{rem}
When $\omega'(A)=-1$ or $n$, our work actually allows the possibility of bringing iterated logarithms into play, but we decide to settle for the present form for simplicity. 
\end{rem}

\begin{rem}
 We may combine  Theorem \ref{CriticalKTC}(b) with Theorem \ref{ExtremalCondition} to confirm the existence of extremal manifolds that are not of Khintchine type for convergence (hence not of Khintchine type), something not previously known. This is a new phenomenon only pertaining to affine subspaces since nondegenerate manifolds are known to be both extremal and of Khintchine type.
\end{rem}

\begin{rem}\label{Groshev}
We remark in passing that there is a parallel theory  for dual approximation, which is usually referred to as the Groshev type theory. 
 Ghosh  has proved that an affine subspace $\mathcal{L}$ is of Groshev type for convergence if $\omega_j(A)<n$ for all $1\le j\le m$ \cite{Gho11}, where these $\omega_j(A)$ are the same higher order exponents introduced in \cite{Kle03,Kle08}. The divergence counterpart has recently been proved under the same condition in the more general inhomogeneous setting by Beresnevich, Ganguly, Ghosh, and Velani \cite{BGGV20}. Comparing the above diophantine condition with that of Theorem \ref{KT}, it is very natural to ask whether the presence of higher order exponents  is again redundant as is the case for the extremality theory. Namely is it true that the sole condition $\omega(A)<n$ should be sufficient to deliver that $\mathcal{L}$ is of Groshev type? If the answer is yes, then we would bring the development of the Groshev type theory in line with that of the Khintchine type theory; if not, this would lead to the existence of manifolds of Khintchine type but not of Groshev type, something completely unheard of! Either way, we would have very interesting findings, but we believe that the former is the much more likely scenario. 
\end{rem}

\subsubsection{Strong Khintchine type manifolds}

Sometimes, the convergence aspect of a Khintchine type manifold can be proved without assuming that $\psi$ is monotonic, in which case we say such a manifold is of \emph{strong Khintchine type for convergence}. Even though the original Khintchine's theorem does not need to assume $\psi$ is monotonic for the convergence statement to hold,  when Khintchine type theorems are proved for manifolds, the monotonicity  plays a key role in one part of the argument by allowing one to average over $q$.  It is exactly this averaging step that renders it possible to employ a certain rational points counting estimate with varying denominators in lieu of a fixed denominator, the latter of which sometimes is insufficient to deliver the desired result. For this reason, it makes sense to introduce the above definition to test how far the Khintchine type theory can go without the monotonicity assumption on the approximation function.

   Indeed, nondegenerate manifolds  satisfying various curvature/rank conditions have been shown to be of strong Khintchine type for convergence \cite{DRV91,BVVZ17,  Sim18}. But these conditions completely exclude curves.  The only nondegenerate curve that  is known to be of strong Khintchine type for convergence to date  is the parabola \cite{Hua20a}. On the contrary to this situation for nondegenerate manifolds, we prove in the result below that the vast majority of affine subspaces are actually of strong Khintchine type for convergence. 

\begin{thm}\label{StrongKTC} Let $\mathcal{L}$ be an affine subspace parametrized by $A$ as in  \eqref{AffineSubspace}. Then
$\mathcal{L}$ is of strong Khintchine type for convergence if $\omega(A') < n$.
\end{thm}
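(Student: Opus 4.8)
The plan is to prove Theorem~\ref{StrongKTC} by a Borel--Cantelli argument carried out term by term in the denominator $q$, the key ingredient being a counting estimate for rational points of a \emph{fixed} denominator lying close to $\mL$; it is precisely the per-denominator (rather than averaged) nature of that estimate which will let us dispense with the monotonicity of $\psi$. So let $\psi$ be an arbitrary approximation function, not assumed monotone, with $\sum_q\psi(q)^n<\infty$. I would first note that, via the affine parametrisation $\bm{x}\mapsto(\bm{x},\widetilde{\bm{x}}A)$, the induced measure on $\mL$ is a constant multiple of Lebesgue measure on $\R^d$ and $\mathcal{S}_n(\psi)\cap\mL$ pulls back to $\limsup_{q\to\infty}E_q$, where
\[
E_q:=\bigl\{\bm{x}\in\R^d:\ \|q\bm{x}\|<\psi(q)\ \text{ and }\ \|q\bm{\beta}+q\bm{x}A'\|<\psi(q)\bigr\};
\]
so by the (convergence) Borel--Cantelli lemma it suffices to prove $\sum_q|E_q\cap B|<\infty$ for every fixed box $B\subset\R^d$. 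The point is that here we only sum over $q$ and never average, so no regularity of $\psi$ is invoked. Since the first condition confines $\bm{x}$ to the $\psi(q)/q$--neighbourhood of the rationals $\bm{p}/q$, $\bm{p}\in\Z^d$, writing $\bm{x}=\bm{p}/q+\bm{t}$ with $|\bm{t}|<\psi(q)/q$ one gets $q\bm{\beta}+q\bm{x}A'=(q\bm{\beta}+\bm{p}A')+q\bm{t}A'$ with $|q\bm{t}A'|\ll_{A'}\psi(q)$, so a ball about $\bm{p}/q$ can meet $E_q$ only if $\|q\bm{\beta}+\bm{p}A'\|<C\psi(q)$ for a suitable $C=C(A')$; bounding each such ball by its volume would give
\[
|E_q\cap B|\ \ll_{B}\ \mathcal{N}_q\Bigl(\frac{\psi(q)}{q}\Bigr)^{d},\qquad
\mathcal{N}_q:=\#\bigl\{\bm{p}\in\Z^d\cap qB':\ \|q\bm{\beta}+\bm{p}A'\|<C\psi(q)\bigr\},
\]
with $B'$ a fixed enlargement of $B$ --- so that $\mathcal{N}_q$ is the number of rationals $\bm{p}/q$ of denominator exactly $q$ lying within $\asymp\psi(q)/q$ of $\mL$.

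The heart of the matter is then the estimate: if $\omega(A')<n$, then for all $q\ge1$ and all $0<\delta<\tfrac12$,
\[
\#\bigl\{\bm{p}\in\Z^d:\ |\bm{p}|\le Cq,\ \|\bm{p}A'+\bm{\gamma}\|<\delta\bigr\}\ \ll_{A',\varepsilon}\ q^{d}\delta^{m}+q^{d-1+\varepsilon},
\]
uniformly in $\bm{\gamma}\in\R^m$. To prove it I would open the fractional-part condition by a smooth periodic majorant $\Psi_\delta\ge\mathbf{1}_{\|\cdot\|<\delta}$ on $\R^m/\Z^m$ with $\widehat{\Psi}_\delta(\bm{0})\ll\delta^{m}$ and rapidly decaying Fourier coefficients, together with a smooth majorant of the indicator of $\{|\bm{p}|\le Cq\}$, and expand in Fourier series. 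The zero frequency produces the main term $\asymp q^{d}\delta^{m}$, while a nonzero frequency $\bm{k}\in\Z^m$ contributes a geometric sum over $\bm{p}$ whose size is governed by the distance from $A'\bm{k}^{\mathrm{T}}\in\R^d$ to $\Z^d$ --- exactly the quantity entering the definition of $\omega(A')$. Fixing $\nu\in(\omega(A'),n)$, so that $\|A'\bm{k}^{\mathrm{T}}\|\gg|\bm{k}|^{-\nu}$ for all $\bm{k}\neq\bm{0}$, a dyadic decomposition of the $\bm{k}$--sum --- in which, at each dyadic scale, the number of $\bm{k}$ for which $A'\bm{k}^{\mathrm{T}}$ is anomalously close to $\Z^d$ is controlled by the dual form of the multidimensional large sieve --- should bound the whole off-diagonal contribution by the error term above; the strict inequality $\nu<n$ is what makes the bookkeeping close.

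Granting this, feeding $\delta=C\psi(q)$ back in yields $\mathcal{N}_q\ll_{A',\varepsilon}q^{d}\psi(q)^{m}+q^{d-1+\varepsilon}$, and therefore
\[
\sum_q|E_q\cap B|\ \ll_{B,A',\varepsilon}\ \sum_q\psi(q)^{m+d}+\sum_q q^{\varepsilon-1}\psi(q)^{d}
\ =\ \sum_q\psi(q)^{n}+\sum_q q^{\varepsilon-1}\psi(q)^{d}.
\]
The first series converges by hypothesis; for the second, Hölder's inequality with conjugate exponents $n/d$ and $n/m$ bounds it by $\bigl(\sum_q q^{(\varepsilon-1)n/m}\bigr)^{m/n}\bigl(\sum_q\psi(q)^{n}\bigr)^{d/n}$, which is finite once $\varepsilon<d/n$ (note $d\ge1$, since $\omega(A')<\infty$ forces $A'\neq\bm{0}$). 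Hence $\limsup_q E_q$ would be null in $\R^d$, that is, $|\mathcal{S}_n(\psi)\cap\mL|_{\mL}=0$, which is the asserted convergence statement --- with no monotonicity of $\psi$ used anywhere.

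The hard part will be the fixed-denominator counting estimate. For the ordinary Khintchine theory (Theorem~\ref{KT}) the monotonicity of $\psi$ lets one group $q$ into dyadic blocks and count the pairs $(q,\bm{p})$ at once --- a $(d+1)$--dimensional lattice-point count, governed by $\omega(A)$ --- whereas in our situation each $q$ must be handled individually, a $d$--dimensional count governed by the stronger exponent $\omega(A')$. What I expect to be genuinely delicate is producing an error term small \emph{uniformly} in $q$ and $\delta$, particularly in the regime where $\delta=\psi(q)$ is smaller than every fixed power of $1/q$ --- so that the main term $q^{d}\delta^{m}$ is negligible and the whole bound must come from the error --- and it is exactly there that the full strength of $\omega(A')<n$ together with a careful large-sieve analysis of the dual sum will be required.
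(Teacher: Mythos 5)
Your overall architecture --- a term-by-term Borel--Cantelli argument in $q$, reduced to counting $\bm{p}\in\Z^d$ with $|\bm{p}|\ll q$ and $\|\bm{p}A'+\bm{\gamma}\|<\delta$ by Fourier expansion plus the multidimensional large sieve --- is exactly the route the paper takes (Theorem \ref{UpperBound-Phi}(b) feeding into Theorem \ref{StrongJTC-Phi}). The gap is that your key counting estimate is false as stated. The claimed bound
\[
\#\bigl\{\bm{p}\in\Z^d:\ |\bm{p}|\le Cq,\ \|\bm{p}A'+\bm{\gamma}\|<\delta\bigr\}\ \ll\ q^{d}\delta^{m}+q^{d-1+\varepsilon},
\]
uniform in $\delta$, cannot hold under the sole hypothesis $\omega(A')<n$. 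Take $d=m=1$ (a line in $\R^2$) and $A'=\alpha$ with $\omega(\alpha)=1.9<2=n$, so that $\|q_k\alpha\|\le q_k^{-1.89}$ for infinitely many $q_k$. With $\bm{\gamma}=0$, $Cq=q_k^{3/2}$ and $\delta=q_k^{1/2}\,\|q_k\alpha\|\le q_k^{-1.39}$, every multiple $p=jq_k$ with $|j|\le q_k^{1/2}$ satisfies $|p|\le Cq$ and $\|p\alpha\|\le\delta$, so the count is $\gg q_k^{1/2}$, while your right-hand side is $\ll q_k^{0.11}+q_k^{3\varepsilon/2}=o(q_k^{1/2})$ for $\varepsilon<1/3$. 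This is consistent with what the large sieve actually delivers, namely $\ll\delta^{m}\max\{q,\delta^{-\nu}\}^{d}$ for any $\nu>\omega(A')$: that bound degrades precisely in the regime $\delta<q^{-1/\nu}$ which you correctly identify as the delicate one, and no dyadic bookkeeping will upgrade it to an error term $q^{d-1+\varepsilon}$, because the obstruction above is genuinely present. Since a non-monotone $\psi$ can be arbitrarily small at individual $q$'s, you cannot avoid this regime as your argument stands.

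There are two ways to close the gap, and the rest of your argument survives either. (i) The paper's device: replace $\psi$ by $\widetilde{\psi}(q):=\max\{\psi(q),q^{-1/n-\varepsilon}\}$; then $\mathcal{S}_n(\psi)\subseteq\mathcal{S}_n(\widetilde{\psi})$ and $\sum\widetilde{\psi}(q)^{n}<\infty$ still, so one may assume $\psi(q)\ge q^{-1/n-\varepsilon}$ outright. Because $\omega(A')<n$, this forces $\delta=\psi(q)$ into the range $\delta\ge q^{-1/\nu}$ where the large-sieve bound is just the main term $q^{d}\delta^{m}$, and then $\sum_q q^{d}\psi(q)^{m}(\psi(q)/q)^{d}=\sum_q\psi(q)^{n}<\infty$ with no error term and no H\"older step. (ii) Alternatively, keep your structure but use the correct worst-case error: since the count is nondecreasing in $\delta$, for $\delta<q^{-1/\nu}$ it is at most its value at $\delta_0=q^{-1/\nu}$, i.e.\ $\ll q^{d-m/\nu}$, so the true uniform bound is $q^{d}\delta^{m}+q^{d-m/\nu}$. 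Your H\"older computation with exponents $n/m$ and $n/d$ then requires $\sum_q q^{-n/\nu}<\infty$, i.e.\ $\nu<n$ --- which is exactly where the hypothesis $\omega(A')<n$ enters. Either repair yields the theorem; as written, the proof does not.
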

\begin{rem}\label{StrongKTC_VS_KTC}
Unlike the situation for the Khintchine type theory where the exact threshold in terms of $\omega(A)$ has been discovered, it is not clear what happens when $\omega(A')>n$ in this new context.  It might be tempting to speculate that in such cases  $\mathcal{L}$ is not of strong Khintchine type for convergence.  If that is indeed the case, we may find $\mathcal{L}$ with $\omega(A')>n>\omega(A)$ and then claim that it is of Khintchine type for convergence but not of strong Khintchine type for convergence. This would be very interesting to know, since currently we do not have evidence supporting or against the existence of such manifolds. As a simpler toy model, one may look at affine coordinate subspaces ($A'$ is zero matrix) first, and hopes to gain some insights into this problem. 
\end{rem}

\subsubsection{Multiplicative approximation on planar lines}
Consider the planar line
$$
\mathcal{L}_{\alpha,\beta}:=\left\{(x,y)\in\mathbb{R}^2\bigm| y=\alpha x+\beta\right\}.
$$
Recently, Chow and Yang \cite[Theorem 1.2]{CY} proved the analogue of Gallagher's theorem on multiplicative approximation for $\mL_{\alpha,\beta}$, that is, almost every point $(x,y)$ on $\mL_{\alpha,\beta}$ satisfies
\begin{equation}\label{GallagherLine}
\liminf_{q\to\infty}q(\log q)^2\|qx\|\|qy\|=0.
\end{equation}

In general, we may consider the set of multiplicatively $\psi$-approximable points on the plane
\begin{equation}\label{MultiPsiAppr}
\mathcal{S}^\times_2(\psi):=\left\{(x,y)\in\R^2: \|qx\|\|qy\|<\psi(q)\textrm{ for infinitely many }{q}\in\N\right\}
\end{equation}
and we say a planar curve $\mathcal{C}$ is of \emph{multiplicative Khintchine type for convergence/divergence} if 
$$
|\mathcal{S}^\times_2(\psi)\cap\mathcal{C}|_{\mathcal{C}}=\text{N\tiny{ULL}}/\text{F\tiny{ULL}}
$$
for any decreasing function $\psi: \mathbb{N}\rightarrow \mathbb{R}_{>0}$
  such that the series
\begin{equation}\label{MultiSeries}
\sum_{q=1}^\infty
 \psi(q) \log q
  \end{equation}
   is convergent/divergent.
 It is, for instance, shown in \cite{BL07} that a nondegenerate planar curve is of multiplicative Khintchine type for convergence. 
 One can see that \eqref{GallagherLine} just falls short of establishing that $\mathcal{L}_{\alpha,\beta}$ is of multiplicative Khintchine type for divergence. To complement \eqref{GallagherLine}, Chow and Yang have also proved that if
\begin{equation}\label{ChowYang}
\omega(\alpha,\beta)<5\;\text{ or }\;\omega^\times{\alpha\choose\beta}<2
\end{equation}
then $\mathcal{L}_{\alpha,\beta}$ is of multiplicative Khintchine type for convergence \cite[Theorems 1.3 and 1.4]{CY}. 
An immediate consequence of this is that the  log-squared factor in \eqref{GallagherLine} is best possible in the sense  that $(\log q)^2$ cannot be replaced with $(\log q)^{2+\varepsilon}$ for any $\varepsilon>0$, under either conditions in \eqref{ChowYang}. In our next theorem, an essentially optimal condition is found for $\mathcal{L}_{\alpha,\beta}$ to be of multiplicative Khintchine type for convergence. Actually, our result is proved in  the more general inhomogeneous setting by introducing a shift $(\theta_1,\theta_2)$ in the definition \eqref{MultiPsiAppr} of $\mathcal{S}^\times_2(\psi)$.

\begin{thm}\label{MultiApprLine}
Let $(\theta_1,\theta_2)\in\R^2$ and suppose that
$(\alpha,\beta)\in\mathbb{R}^2$ satisfies 
\begin{equation}\label{Huang}
\omega{\alpha\choose\beta}<2\;\text{ and }\;\alpha\neq0.
\end{equation}
Then for  almost all $(x,y)\in\mathcal{L}_{\alpha,\beta}$ there are  only finitely many $q\in\mathbb{N}$ such that
$$
\|qx-\theta_1\|\|qy-\theta_2\|<\psi(q)
$$
if the series \eqref{MultiSeries} converges. In particular, 
$\mathcal{L}_{\alpha,\beta}$ 
is of multiplicative Khintchine type for convergence. 
\end{thm}

We first of all point out that  \eqref{Huang} is indeed a more general assumption  than \eqref{ChowYang}, as the former is implied by the latter. Clearly both conditions in \eqref{ChowYang}  requires $\alpha\neq0$. Also, we see that by Khintchine's transference principle \eqref{TransferPrinciple} $\omega(\alpha,\beta)<5\implies\omega{\alpha\choose\beta}<2$, and that by definition   $\omega{\alpha\choose\beta}\le \omega^\times{\alpha\choose\beta}$. So \eqref{Huang} does relax \eqref{ChowYang}. In addition, to have a more intuitive comparison on how generic the various diophantine conditions in \eqref{ChowYang} and \eqref{Huang} are, we record the following dimensional statements on their respective ``exceptional sets" (see \cite{BD86,BV15}): 
$$
\dim\left\{(\alpha,\beta)\in\R^2\bigm|\omega(\alpha,\beta)\ge5\right\}=\frac32;
$$
$$
\dim\left\{(\alpha,\beta)\in\R^2\Bigm|\omega^\times{\alpha\choose\beta}\ge2\right\}=\frac75;
$$
$$
\dim\left\{(\alpha,\beta)\in\R^2\Bigm|\omega{\alpha\choose\beta}\ge2\right\}=1.
$$

Next, we observe that Theorem \ref{MultiApprLine} is best possible except for the boundary case $\omega{\alpha\choose\beta}=2$. By a theorem of Kleinbock \cite[Corollary 5.7]{Kle03}, we know that if $\omega{\alpha\choose\beta}>2$ then $\mathcal{L}_{\alpha,\beta}$ is not strongly extremal, that is, there exists an $\varepsilon>0$ such that
$$
|\mathcal{S}^\times_2(1+\varepsilon)\cap\mathcal{L}_{\alpha,\beta}|_{\mathcal{L}_{\alpha,\beta}}>0
$$
where as usual we write  $\mathcal{S}^\times_2(\tau)$ for $\mathcal{S}^\times_2(q\to q^{-\tau})$. Hence $\mathcal{L}_{\alpha,\beta}$ is not of multiplicative Khintchine type for convergence, which explains the almost sharpness of the first condition in \eqref{Huang}. Also the second condition $\alpha\neq0$ cannot be dispensed with from \eqref{Huang}. Otherwise when $\alpha=0$, \eqref{Huang} becomes    $\omega{0\choose\beta}=\omega(\beta)<2$. But
 we see that $\mathcal{L}_{0,\beta}\subset\mathcal{S}^\times_2(\tau)$ for all $\tau<\omega(\beta)$, which provides  counterexamples (that are not strongly extremal) as long as $\omega(\beta)>1$. This shows we have to assume $\alpha\neq0$ unless $\omega(\beta)=1$. But it turns out that even the much stronger condition $\omega(\beta)=1$ cannot get the job done by itself. In fact, Beresnevich, Haynes, and Velani have to assume $\beta$ is badly approximable (meaning that $\liminf q\|q\beta\|>0$ which is even more restrictive than $\omega(\beta)=1$ and no longer holds for typical numbers)  to prove that $\mL_{0,\beta}$ is of multiplicative Khintchine type for convergence \cite[Corollary 2.1]{BHV20}. 
 
 Once equipped with a key counting result (Theorem \ref{UpperBound}) below, the proof of Theorem \ref{MultiApprLine} follows by the methodology in \cite{BL07}.  See  \cite[Theorem 2]{BV15} for a related inhomogeneous theorem in the context of nondegenerate planar curves.

\subsection{Jarn\'{i}k type manifolds}

In addition to the Khintchine type theory, one may as well seek Hausdorff measure theoretic statements for the  set $\mathcal{M}\cap\mathcal{S}_n(\psi)$ of $\psi$-approximable points on a manifold $\mathcal{M}$. This turns out to be a much more daunting task, as it requires deeper understanding of the distribution of rational points lying close to $\mathcal{M}$. A general phenomenon is that the faster  $\psi$ is allowed to decay the more difficult will it be to prove universal results for large class of manifolds. This is partly due to our poor knowledge on the distribution of rational points near manifolds at very fine scales. When $\psi$ is not too small, the problem is expected  to be governed by geometric/analytic properties of the manifold; however once $\psi$ drops below a certain threshold, the arithmetic of the manifold starts to kick in, which further complicates the problem.  With these being said, 
 there has been, nevertheless,  some exciting developments for nondegenerate submanifolds \cite{BDV07,VV06,Ber12,Hua20b}. In the following we study this problem for affine subspaces.

Our first result in this section is the convergence part of an inhomogeneous Jarn\'{i}k type theorem. Recall that the set of inhomogeneously $\psi$-approximable points  $\mathcal{S}_n(\psi,\bm{\theta})$ is defined in \eqref{InhomoPsiAppr}.

\begin{thm}\label{JTC} 
 Let $\mathcal{L}$ be an affine subspace parametrized by $A$ as in  \eqref{AffineSubspace}. Suppose that $A$ has diophantine exponent  $\omega(A)=\omega<n$, and let
 \begin{equation}\label{RangeOfS-Conv}
  \frac{\omega(d+1)-m}{\omega+1}<s\le d.
 \end{equation}
Then for any decreasing function $\psi:\N\to\R_{>0}$ such that
\begin{equation}\label{ConvCondition}
    \sum_{q=1}^\infty\psi(q)^{m+ s}q^{d - s}<\infty,
\end{equation}
and for any $\bm{\theta}\in\R^n$, we have
$$\mathcal{H}^s(\mathcal{S}_n(\psi,\bm{\theta}) \cap \mathcal{L}) = 0.
$$
\end{thm}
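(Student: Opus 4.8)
The plan is to reduce the Jarník convergence statement to a Borel–Cantelli argument built on a good upper bound for the number of rational points lying close to $\mathcal{L}$ with a given denominator range. The starting point is the standard observation that a point $\bm{y}=(\bm{x},\widetilde{\bm{x}}A)\in\mathcal{L}$ lying in $\mathcal{S}_n(\psi,\bm{\theta})$ forces, for infinitely many $q$, the existence of $\bm{p}\in\Z^n$ with $|q\bm{y}-\bm{p}-\bm{\theta}|<\psi(q)$. Splitting $\bm{p}$ according to the $\R^d$ and $\R^m$ blocks, this says $\bm{x}$ is within $\psi(q)/q$ of a rational point $\bm{a}/q$ in the base $\R^d$, while simultaneously $\widetilde{\bm{x}}A$ is within roughly $\psi(q)/q$ of the corresponding rational shift. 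Eliminating $\bm{x}$, the constraint on the $\R^m$-block becomes an inhomogeneous approximation condition involving $A\bm{a}^{\mathrm T}$ (plus lower-order $\psi$-terms), which is precisely where the diophantine exponent $\omega(A)<n$ will be used to control how often the base rational $\bm{a}/q$ admits an admissible companion in the fibre direction. This is the mechanism underlying the counting estimates the paper advertises (its Theorem \ref{UpperBound} and relatives), so I would invoke such a counting lemma rather than re-derive it.

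Concretely, I would set up the limsup set as a union over dyadic blocks $q\asymp Q$ of neighbourhoods: for each such block, $\mathcal{S}_n(\psi,\bm{\theta})\cap\mathcal{L}$ is covered by balls of radius $\asymp \psi(Q)/Q$ (in the intrinsic metric on $\mathcal{L}$, which is bi-Lipschitz to the $\bm{x}$-coordinate) centred at the admissible rational configurations. Denoting by $N(Q)$ the number of such configurations in the block, the $s$-dimensional Hausdorff sum contributed by the block is $\ll N(Q)\,(\psi(Q)/Q)^{s}$. The counting input gives $N(Q)\ll Q^{d+1}\psi(Q)^{m}\cdot$(a correction) on the range of $s$ allowed by \eqref{RangeOfS-Conv}; heuristically $N(Q)\approx Q^{d+1}\psi(Q)^{m}$ because there are $\asymp Q^{d+1}$ integer vectors $(q,\bm{a})$ with $q\asymp Q$, $|\bm{a}|\ll Q$, and a proportion $\asymp\psi(Q)^{m}$ of them place $A\bm{a}^{\mathrm T}$ near an integer to within $\psi(Q)$. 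Summing over dyadic $Q$ then yields a series comparable to $\sum_Q Q^{d+1}\psi(Q)^{m}(\psi(Q)/Q)^{s}=\sum_Q \psi(Q)^{m+s}Q^{d+1-s}$, and after undoing the dyadic grouping (which costs a factor $\asymp Q$), this matches the convergence hypothesis \eqref{ConvCondition}; the Hausdorff–Cantelli lemma then forces $\mathcal{H}^s=0$. The lower restriction on $s$ in \eqref{RangeOfS-Conv} is exactly the range in which the ``expected'' count $Q^{d+1}\psi(Q)^m$ dominates the unavoidable contribution of lower-dimensional rational sub-subspaces / coincidental near-integer points, so it is where the clean counting bound is valid.

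The main obstacle, and the place where $\omega(A)<n$ genuinely enters, is establishing the counting bound uniformly over the whole admissible range of $s$, including the inhomogeneous shift $\bm{\theta}$. When $s$ is close to its lower endpoint $\frac{\omega(d+1)-m}{\omega+1}$, the naive count $Q^{d+1}\psi(Q)^{m}$ is no longer clearly an upper bound: clusters of rational points can accumulate abnormally near rational sub-affine-subspaces of $\mathcal{L}$, and one must show that such clustering is limited by the hypothesis $\omega(A)<n$ — roughly, that $A\bm{a}^{\mathrm T}$ cannot be too close to $\Z^m$ too often as $\bm{a}$ ranges over a box. This is precisely the type of estimate the paper obtains via the multidimensional large sieve and its dual; I would quote the relevant counting theorem from later in the paper (Theorem \ref{UpperBound} or its companion) as a black box. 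A secondary technical point is handling the inhomogeneous shift: the elimination of $\bm{x}$ turns $\bm{\theta}$ into a fixed real shift inside the fibre approximation, and one needs the counting bound in its inhomogeneous form, but since the large sieve is insensitive to such shifts this does not change the exponents. Finally, one checks the boundary case $s=d$ separately — there the statement is essentially the (strong) Khintchine-type convergence result already available from Theorem \ref{StrongKTC}/Theorem \ref{KT} under $\omega(A)<n$ — and verifies that the sub-dyadic summation and the bi-Lipschitz change of metric on $\mathcal{L}$ introduce only harmless constants.
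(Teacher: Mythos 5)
Your skeleton --- project to the $\bm{x}$-coordinate, cover the limsup set over dyadic blocks $q\asymp 2^k$ by balls of radius $\asymp\psi(2^k)/2^k$ centred at the admissible rationals, bound the number of such balls by the inhomogeneous counting function $N_A(2^{k+2},c_1\psi(2^k),\bm{\theta})$, and conclude with the Hausdorff--Cantelli lemma --- is exactly the paper's: Theorem \ref{JTC} is deduced there from the general Theorem \ref{JTC-Phi}, whose proof is precisely this covering argument fed by the large-sieve counting bound (Theorem \ref{UpperBound-Phi}). The inhomogeneous shift and the bi-Lipschitz projection are handled as you say; one small point is that the reduction is carried out on each translate $\mathcal{U}+\bm{k}$ of the unit cube separately, using that the transformed matrix $A_{\bm{k}}$ retains the diophantine property of $A$ (Lemmas \ref{Shift} and \ref{Transform}), which is what keeps $|\bm{a}|\ll q$ inside the count.

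The genuine soft spot is your justification of $N(Q)\ll\psi(Q)^mQ^{d+1}$. Theorem \ref{UpperBound}(a) gives this clean bound only when $\delta=\psi(Q)\ge Q^{-1/\omega+\varepsilon}$; its range of validity is a condition on $\delta$ versus $Q$ and has nothing to do with $s$, while the hypothesis \eqref{ConvCondition} alone allows $\psi$ to decay arbitrarily fast, so the clean bound can genuinely fail in your dyadic blocks. Your assertion that the lower restriction on $s$ ``is where the clean counting bound is valid'' conflates these two issues and is not a proof as it stands. Two correct ways to close the gap: (i) the paper's way --- replace $\psi$ by $\max\{\psi,\gamma\}$ with $\gamma(q)=q^{-\frac{d+1-s}{m+s}-\varepsilon}$; this only enlarges $\mathcal{S}_n(\psi,\bm{\theta})$, $\gamma$ still satisfies \eqref{ConvCondition}, and \eqref{RangeOfS-Conv} is exactly the inequality $\frac{d+1-s}{m+s}<\frac1\omega$ ensuring the enlarged function stays above the counting threshold; or (ii) keep the second term $C(A,\varepsilon)Q^{d+1-\frac{m}{\omega}+\varepsilon}$ in the max of Theorem \ref{UpperBound}(a) and check, using $\psi(2^k)\ll (2^k)^{-\frac{d+1-s}{m+s}}$ (forced by \eqref{ConvCondition}, monotonicity and Cauchy condensation), that $\sum_k(2^k)^{d+1-s-\frac{m}{\omega}+\varepsilon}\psi(2^k)^s$ converges precisely under \eqref{RangeOfS-Conv}. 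Either route is where the lower endpoint of \eqref{RangeOfS-Conv} actually enters. Finally, the case $s=d$ needs no separate treatment (Hausdorff--Cantelli applies verbatim and $\mathcal{H}^d$ is comparable to Lebesgue measure); appealing to Theorem \ref{KT} there would be circular, since the paper derives Theorem \ref{KT} from Theorem \ref{JTC}, and Theorem \ref{StrongKTC} assumes $\omega(A')<n$, which does not follow from $\omega(A)<n$.
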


   \begin{rem} \label{JTC-is-optimal?}
   For a given $s\le d$, \eqref{RangeOfS-Conv} can be rewritten as 
   $
   \omega(A)<\frac{m+s}{d+1-s}
   $, viewed as a condition 
   for $\omega(A)$. A similar application of Theorem \ref{JTC-Phi} as in the proof of Theorem \ref{CriticalKTC}(a) reveals that the conclusion of Theorem \ref{JTC} still holds in the limit case   $\omega(A)=\frac{m+s}{d+1-s}$ (or equivalently $s=\frac{\omega(d+1)-m}{\omega+1}$) as long as $\omega'(A)<-\frac1{d+1-s}$. However, it is
   in general not clear what happens if $\omega(A)>\frac{m+s}{d+1-s}$, beyond the  case $s=d$ on the Khintchine type theory that has been treated earlier in Theorem \ref{KT}.
 \end{rem}

In the homogeneous case, we also have a divergence counterpart.  
\begin{thm}\label{JTD}
Let $\mathcal{L}$ be an affine subspace parametrized by $A$ as in  \eqref{AffineSubspace}. Suppose that $A$ has diophantine exponent  $\omega(A)=\omega<n$,  and let
 \begin{equation}\label{RangeOfS-Div}
  \frac{d+1-m\tau_0}{\tau_0+1}<s\le d,
 \end{equation}
 where
  \begin{equation}\label{tau0}
 \tau_0:=\frac1{\max\{m,\omega\}}.
 \end{equation}
Then for any decreasing function $\psi:\N\to\R_{>0}$ such that
\begin{equation}\label{DivCondition}
    \sum_{q=1}^\infty\psi(q)^{m+ s}q^{d - s}=\infty,
\end{equation}
we have
$$\mathcal{H}^s(\mathcal{S}_n(\psi) \cap \mathcal{L}) = \infty.
$$
For the case $s=d$, the conclusion holds in the stronger sense that $\mathcal{S}_n(\psi) \cap \mathcal{L}$ has full $d$-dimensional Hausdorff measure in $\mathcal{L}$.
\end{thm}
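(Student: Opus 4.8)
The plan is to construct, for each admissible $s$, a suitable $\limsup$ set of balls on $\mathcal{L}$ and apply the mass transference principle (or rather its generalization to systems of linear forms / rectangles, due to Beresnevich--Velani and Wang--Wu--Xu). The natural starting point is the homogeneous Khintchine type divergence statement on $\mathcal{L}$, which — granting the belief expressed in the excerpt that affine subspaces are of Khintchine type for divergence, or more concretely granting the $s=d$ case which the theorem itself asserts — gives a full-measure $\limsup$ set in $\mathcal{L}$ of points approximable to within $\psi$. To pass from Lebesgue-full to $\mathcal{H}^s$-infinite for smaller $s$, I would set up the rational points near $\mathcal{L}$ as a sequence of \emph{rectangles} (thin in the $m$ normal directions, of a different width in the $d$ tangential directions) and invoke the mass transference principle for rectangles: if the ``full-dimensional'' version holds for the larger rectangles, then the $\mathcal{H}^s$-divergence holds for the appropriately shrunk ones, the exponent bookkeeping producing exactly the sum $\sum \psi(q)^{m+s} q^{d-s}$ in \eqref{DivCondition}.

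The key steps, in order, would be: (i) Establish a \emph{lower bound} (ubiquity) for the number of rational points $(\bm{p}/q, \bm{r}/q)$ with $q \le Q$ lying within $\psi(q)$ of $\mathcal{L}$, uniformly at the scale dictated by $\tau_0 = 1/\max\{m,\omega\}$; this is where the hypothesis $\omega(A) = \omega < n$ and the lower cutoff \eqref{RangeOfS-Div} on $s$ enter, ensuring there are genuinely enough rationals and that they are not all trapped in a lower-dimensional rational subspace. One expects this to follow from the dual-sieve / counting machinery advertised in the abstract (the ``novel estimates for the number of rational points close to an affine subspace''), i.e. from a companion lower-bound counting result paired with Theorem \ref{UpperBound}. (ii) Feed this ubiquitous system into the Beresnevich--Dickinson--Velani ubiquity framework (or directly into the mass transference principle for linear forms), checking the local ubiquity hypothesis and the divergence of the governing series. (iii) Verify that the resulting Hausdorff-measure statement, after translating widths back through the parametrization $\bm{x} \mapsto (\bm{x}, \widetilde{\bm{x}} A)$, yields $\mathcal{H}^s(\mathcal{S}_n(\psi)\cap\mathcal{L}) = \infty$ for $s$ in the stated range, and the full-measure conclusion when $s = d$. (iv) Handle the monotonicity of $\psi$: since $\psi$ is decreasing we may pass freely between ``$\|q\bm{y}\| < \psi(q)$ for infinitely many $q$'' and dyadic-block versions, which is what makes the ubiquity estimate usable.

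The main obstacle, I expect, is step (i): producing a \emph{lower} bound for rational points near $\mathcal{L}$ that is strong enough and, crucially, \emph{uniform in the localization} (a genuine ubiquity statement, not just a global count). The large sieve and its dual naturally give upper bounds; extracting a matching lower bound requires either a separate argument (e.g. a second-moment / Cauchy--Schwarz lower bound for the relevant counting function, showing the rationals are not too clustered) or an appeal to the homogeneous divergence case of the Khintchine theory on $\mathcal{L}$ as a black box. A secondary subtlety is that the admissible interval for $s$ has left endpoint $\frac{d+1-m\tau_0}{\tau_0+1}$ with $\tau_0 = 1/\max\{m,\omega\}$, so the two regimes $\omega \le m$ and $\omega > m$ must be treated with the correct choice of approximation scale, and one must check that the ubiquity estimate degrades in $\omega$ exactly as this formula predicts; getting this threshold sharp — rather than merely establishing divergence for $s$ close to $d$ — is where the delicate part of the counting input is spent. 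Finally, the strengthening to \emph{full} $d$-dimensional Hausdorff measure when $s = d$ should follow from a local (every-ball) version of the ubiquity estimate combined with a standard zero-one / local-to-global argument.
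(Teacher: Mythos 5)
Your overall architecture --- resonant rational points near $\mathcal{L}$, a local ubiquity statement, and the Beresnevich--Dickinson--Velani framework to convert divergence of $\sum\psi(q)^{m+s}q^{d-s}$ into an $\mathcal{H}^s$ statement --- is exactly the paper's route, and you correctly locate the crux at your step (i). But that step is left genuinely open, and neither of your two suggested resolutions works. Appealing to the homogeneous divergence Khintchine statement on $\mathcal{L}$ ``as a black box'' is circular: the $s=d$ case is part of what Theorem \ref{JTD} asserts, and no such divergence result is available as an external input under the hypothesis $\omega(A)<n$. A second-moment/Cauchy--Schwarz lower bound is also not what is needed and is not developed; the large sieve here only ever produces upper bounds. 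The paper's actual mechanism is elementary and in the opposite spirit: Minkowski's theorem on linear forms (Lemma \ref{Minkowski}) shows that \emph{every} point of $\mathcal{U}$ lies within $\big(qQ^{1/d}\delta^{m/d}\big)^{-1}$ of some $\bm{a}/q$ with $q\le Q$ and $\|(q,\bm{a})A\|<\delta$, i.e.\ the covering is automatic by pigeonhole; the counting input (Theorem \ref{UpperBound}(a), proved by the large sieve) is then used \emph{negatively}, to show that the rationals with small denominator $q\le\kappa Q$ cover at most half of any ball, which is precisely where the hypothesis $\omega<n$ and the threshold $\tau_0=1/\max\{m,\omega\}$ enter. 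Without this (or an equivalent transference/pigeonhole covering), your plan does not get off the ground.

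Two further steps you would still need to supply even granting the ubiquity statement. First, a preliminary reduction sandwiching $\psi(q)$ between $q^{-\frac{d+1-s}{m+s}-\varepsilon}$ and $q^{-\frac{d+1-s}{m+s}}$ (the lower truncation handled by the convergence Theorem \ref{JTC}, the upper one by a Cauchy-condensation argument showing the divergence condition survives); this is what guarantees $\psi(Q)\ge Q^{-\tau_0+\varepsilon}$ so that Theorem \ref{UbiquityLowerBound} applies at scale $\delta=\psi(Q)$. Second, the ubiquity function must be inflated by a slowly growing factor $u(q)^{1/d}$, with $u$ constructed from the divergence of $\sum\psi(q)^{m+s}q^{d-s}$ by blocking, so that the series $\sum_k\Psi(2^k)^s\rho(2^k)^{-d}$ still diverges; without this device the BDV criterion need not be met. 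Finally, the full-measure strengthening at $s=d$ is obtained not by a zero--one law but by translating the unit cube (Lemmas \ref{Shift} and \ref{Transform}), which preserves $\omega(A)$ and lets one repeat the argument on each translate.
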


It remains to be seen whether our ubiquity approach can be modified to prove the inhomogeneous version of Theorem \ref{JTD} (that is, replacing $\mathcal{S}_n(\psi)$ by $\mathcal{S}_n(\psi,\bm{\theta})$ ). Previously this has been successfully done for nondegenerate curves using  techniques from geometry of numbers \cite{BVV11,BVVZ21}.

The range \eqref{RangeOfS-Div} is the same as \eqref{RangeOfS-Conv} when $\omega\ge m$, but the former is shorter when $\omega<m$. So  the above two theorems, when combined together,  give a complete Jarn\'{i}k type theorem for affine subspaces with $s$ in the range \eqref{RangeOfS-Div}.

\begin{cor}\label{JT}
 Let $\mathcal{L}$ and $A$ be the same as with Theorem \ref{JTC}, and suppose that $s$ is a real number  satisfying \eqref{RangeOfS-Div}. Then  for any decreasing function $\psi:\N\to\R_{>0}$ we have 
 \[\displaystyle
 \mathcal{H}^s(\mathcal{S}_n(\psi) \cap \mathcal{L}) = 
 \begin{cases}
 0&\text{ when } \sum\psi(q)^{m+ s}q^{d - s}<\infty;\\[4pt]
 \infty&\text{ when } \sum\psi(q)^{m+ s}q^{d - s}=\infty.
 \end{cases}
 \]
\end{cor}

Note that  the reverse implication part of Theorem
\ref{KT} (the main substance therein), namely, $\mathcal{L}$ is of Khintchine type  when $\omega(A)<n$, just corresponds to the special case $s=d$ of Corollary \ref{JT}, since the $d$-dimensional Hausdorff measure $\mathcal{H}^d$ is comparable to the induced Lebesgue measure $|\cdot|_\mathcal{L}$ on $\mathcal{L}$.

Next we investigate the possibility of removing the monotonicity assumption on the approximation function $\psi$. As a result, this would permit us to deal with the general setup that $\psi$ is supported in any given infinite subset $\mathcal{Q}$ of $\N$. To this end, define
$$
\nu(\mathcal{Q}):=\inf\bigg\{\nu\in\R\biggm|\sum_{q\in\mathcal{Q}}q^{-\nu}<\infty\bigg\}.
$$
It is easy to see that the quantity $\nu(\mathcal{Q})\in[0,1]$ measures the sparseness of $\mathcal{Q}$.

\begin{thm}\label{StrongJTC}
Let $\mathcal{L}$ be an affine subspace parametrized by $A$ as in  \eqref{AffineSubspace},  $\mathcal{Q}$ be an infinite subset of $\N$ with $\nu(\mathcal{Q})=\nu$.  Suppose that $A'$ has diophantine exponent  $\omega(A')=\omega<\frac{n}\nu$,  and let
 \begin{equation}\label{RangeOfS-Strong}
  \frac{\omega(d+\nu)-m}{\omega+1}<s\le d.
 \end{equation}
Then for any (not necessarily decreasing) function $\psi:\mathcal{Q}\to\R_{\ge0}$ such that
\begin{equation}\label{ConvCondition-Strong}
    \sum_{q\in\mathcal{Q}}\psi(q)^{m+ s}q^{d - s}<\infty,
\end{equation}
and for any $\bm{\theta}\in\R^n$, we have
$$\mathcal{H}^s(\mathcal{S}_n(\psi,\bm{\theta}) \cap \mathcal{L}) = 0.
$$
\end{thm}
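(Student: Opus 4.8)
The plan is to reduce the statement to a counting estimate for rational points lying close to $\mathcal{L}$ with a \emph{fixed} denominator $q$, and then to run a Borel--Cantelli argument with respect to $\mathcal{H}^s$ directly on $\mathcal{L}$, without ever averaging over $q$. The point of working with a fixed $q$ is precisely that, once such a counting bound is available, there is no need to assume monotonicity of $\psi$: the sum in \eqref{ConvCondition-Strong} is a sum over $q\in\mathcal{Q}$ of contributions that are handled term by term. First I would fix $\bm{\theta}$ and a large $q\in\mathcal{Q}$, and cover the set of points of $\mathcal{L}$ that are within $\psi(q)/q$ (in sup-norm on $\R^n$) of some rational point $\bm p/q$ with $\bm p\equiv\bm{\theta}$ appropriately, by balls of radius $\asymp \psi(q)/q$. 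The $d$-dimensional subspace $\mathcal{L}$ is parametrised by $\bm x\in\R^d$ via $\bm x\mapsto(\bm x,\widetilde{\bm x}A)$, so a rational point near $\mathcal{L}$ pins down $\bm x$ to within $\psi(q)/q$ in the first $d$ coordinates and then forces $\widetilde{\bm x}A$ to be within $\psi(q)/q$ of $\bm p'/q$ in the remaining $m$ coordinates; this last condition is exactly $\|q\widetilde{\bm x}A-\bm\theta'\|<\psi(q)$, an inhomogeneous Diophantine condition governed by $A'$ (the inhomogeneous shift absorbing the $\bm\beta$-row of $A$ and the fractional part of $\bm\theta$). The diophantine exponent $\omega(A')=\omega$ is what controls how many admissible residues $\bm p'$ can occur as $\bm x$ ranges over a box, i.e. it controls the number of ``good'' boxes at scale $1/q$ in the $\bm x$-space.

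The key step is therefore to establish, using Theorem \ref{UpperBound} (the counting result referenced in the excerpt) or a direct version of it with a fixed denominator, a bound of the shape
\begin{equation*}
N_q(\psi)\;\ll\; q^{d}\,\psi(q)^{m}\;+\;(\text{lower-order term governed by }\omega(A')),
\end{equation*}
for the number of boxes of side $1/q$ in the unit cube of $\bm x$-space that meet the $\psi(q)/q$-neighbourhood of a rational point of denominator $q$ near $\mathcal{L}$. Each such box contributes an $\mathcal{H}^s$-cost of $\asymp(\psi(q)/q)^{s}$ when $s\le d$, so the total $\mathcal{H}^s$-mass of the lim sup set is bounded by
\begin{equation*}
\sum_{q\in\mathcal{Q}} N_q(\psi)\,\Big(\frac{\psi(q)}{q}\Big)^{s}\;\ll\;\sum_{q\in\mathcal{Q}}\psi(q)^{m+s}q^{d-s}\;+\;\sum_{q\in\mathcal{Q}}(\text{error})\,\psi(q)^{s}q^{-s}.
\end{equation*}
The first sum is finite by hypothesis \eqref{ConvCondition-Strong}. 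For the error sum one uses the definition of $\omega=\omega(A')$: the number of $\bm q'$ producing small $\|A'\bm q'^{\mathrm T}\|$ is controlled, which converts the error into a convergent series provided $\omega<n/\nu$, and the lower cutoff $s>\frac{\omega(d+\nu)-m}{\omega+1}$ in \eqref{RangeOfS-Strong} is exactly the threshold that makes the resulting exponent of $q$ summable after invoking $\nu(\mathcal{Q})=\nu$ (so that $\sum_{q\in\mathcal{Q}}q^{-\nu-\varepsilon}<\infty$). One then concludes $\mathcal{H}^s(\mathcal{S}_n(\psi,\bm\theta)\cap\mathcal{L})=0$ by the Hausdorff--Cantelli lemma.

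I expect the main obstacle to be the uniformity of the fixed-denominator counting bound: Theorem \ref{UpperBound} as stated in the paper presumably sums over denominators up to some $Q$, and extracting a clean per-$q$ estimate with the correct secondary term in terms of $\omega(A')$ (rather than $\omega(A)$ or a multiplicative exponent) is the delicate point. In particular one must be careful that the inhomogeneous shift $\bm\theta$ does not degrade the bound, and that the ``bad'' boxes — those concentrated near sub-subspaces where $A'\bm q'^{\mathrm T}$ is anomalously small — are correctly accounted for rather than overcounted; this is where the large sieve input underlying Theorem \ref{UpperBound} does the real work. Once that estimate is in hand, the remaining bookkeeping (choice of $\varepsilon$, verifying \eqref{RangeOfS-Strong} gives summability, handling the redefined $\log$) is routine. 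The restriction to $\omega(A')$ rather than $\omega(A)$, and the appearance of $d+\nu$ in place of $d+1$, both trace back to this counting step: the sparsity parameter $\nu$ enters only through $\sum_{q\in\mathcal{Q}}q^{-\nu-\varepsilon}<\infty$, and it is $A'$ not $A$ because with a fixed denominator the $\bm\beta$-row only shifts the target and never constrains the count.
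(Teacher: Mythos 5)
Your proposal is correct and follows essentially the same route as the paper: the paper's Theorem \ref{UpperBound-Phi}(b) is exactly the fixed-denominator large-sieve count $N_A'$ in terms of $\omega(A')$ that you ask for, and the proof (via the intermediate Theorem \ref{StrongJTC-Phi}) runs the same per-$q$ Hausdorff--Cantelli covering with the sparsity $\nu$ entering through $\sum_{q\in\mathcal{Q}}q^{-\nu-\varepsilon}<\infty$. The one device you leave implicit is how the secondary term of the count is tamed when $\psi(q)$ is anomalously small: the paper replaces $\psi$ by $\max\{\psi,\gamma\}$ with $\gamma(q)=q^{-\frac{d+\nu-s}{m+s}-\varepsilon}$, which forces the max in the counting bound to be attained by $q^d$ and makes the range condition \eqref{RangeOfS-Strong} do precisely the job you predict.
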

Clearly our strong Khintchine type theorem (Theorem \ref{StrongKTC}) is the special case of Theorem \ref{StrongJTC} with $\mathcal{Q}=\N$, $s=d$, and $\bm{\theta}=\bm{0}$.

It is interesting to compare the respective ranges  \eqref{RangeOfS-Conv} and \eqref{RangeOfS-Strong} of $s$. Recall that the typical values of $\omega(A)$ and $\omega(A')$ are $m/(d+1)$ and $m/d$ respectively. Then note that for typical $A$, \eqref{RangeOfS-Conv} simply becomes $0<s\le d$ which covers the full range. On the other hand, if $\mathcal{Q}=\N$, then $\nu(\mathcal{Q})=1$ and \eqref{RangeOfS-Strong} becomes $m/n<s\le d$. If  $\mathcal{Q}$ is
 a super-polynomial sequence (meaning that it grows faster than any polynomial), such as $\{n^{\lfloor\log{n}\rfloor}\}$ or any lacunary sequence, it is easy to see that $\nu(\mathcal{Q})=0$. Therefore for such a  sequence $\mathcal{Q}$ and for a typical $A'$, \eqref{RangeOfS-Strong} becomes $0<s\le d$, which again covers the full range. 

As another interesting consequence, Theorem \ref{StrongJTC} applies when  $\nu(\mathcal{Q})=0$, $\omega(A')<\infty$, and $s=d$ . More precisely, under the rather mild condition that $A'$ is diophantine,  for any infinite subset $\mathcal{Q}$ of $\N$ with $\nu(\mathcal{Q})=0$ and for any function $\psi:\mathcal{Q}\to\R_{\ge0}$ such that $\sum_{q\in\mathcal{Q}}\psi(q)^n<\infty$, we have
$$
|\mathcal{S}_n(\psi,\bm{\theta}) \cap \mathcal{L}|_\mL = 0.
$$
In other words, this shows that when $\omega(A')<\infty$,  $\mathcal{L}$ satisfies the convergence part of the inhomogeneous Khintchine type theorem provided that the approximation function $\psi$, which needs not be decreasing, is supported on a super-polynomial sequence $\mathcal{Q}$.

\subsection{Hausdorff dimension of very well approximable vectors}
Applying our Jarn\'{i}k type theorems (Theorems \ref{JTC} and \ref{JTD}) and our main counting results in the next section (Theorems \ref{UpperBound} and \ref{DualUpperBound}),   we  can obtain  the following result for the Hausdorff dimension of the set $\mathcal{S}_n(\tau,\bm{\theta})\cap\mathcal{L}$.

\begin{thm}\label{Dimension}
Let $\mathcal{L}$ be an affine subspace parametrized by $A$ as in \eqref{AffineSubspace}, and suppose that $\omega(A)=\omega$ and $\sigma(A)=\sigma$. Then for any $\bm{\theta}\in\R^n$
we have
$$\everymath={\displaystyle}
\dim\mathcal{S}_n(\tau,\bm{\theta})\cap\mathcal{L}\le\left\{
\begin{aligned}
\frac{n+1}{\tau+1}-m\;\;&\quad {when }\quad\frac1n\le\tau\le\frac1{\omega};\\
\frac{(d+1)\omega-m}{(\tau+1)\omega}& \quad {when }\quad\frac1{\omega}\le\tau\le\frac1{\omega}+\sigma-\frac{d+1}m;\\
\frac{m(\sigma-\tau)}{\tau+1}\quad&\quad\text{when}\quad\frac1{\omega}+\sigma-\frac{d+1}m\le\tau\le\sigma;\\
0\quad\qquad&\quad\text{when}\quad\sigma\le\tau.
\end{aligned}
\right.
$$
Moreover, in the homogeneous case $\bm{\theta}=0$ we have the exact formula
\begin{equation}\label{DimFormula}
\dim\mathcal{S}_n(\tau)\cap\mathcal{L}=\frac{n+1}{\tau+1}-m\quad {when }\;\;\frac1n\le\tau<\tau_0,
\end{equation}
where $\tau_0$ is given in \eqref{tau0}.
\end{thm}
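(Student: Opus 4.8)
The plan is to derive the upper bounds from the Jarn\'ik type convergence statement (Theorem \ref{JTC}) together with the counting-based input (Theorems \ref{UpperBound} and \ref{DualUpperBound}), and then to obtain the matching lower bound in the homogeneous range $\frac1n\le\tau<\tau_0$ from the divergence statement (Theorem \ref{JTD}). For the upper bounds, first I would fix $\tau$ and specialize $\psi(q)=q^{-\tau}$, so that the convergence condition \eqref{ConvCondition}, namely $\sum_q q^{-\tau(m+s)}q^{d-s}<\infty$, holds precisely when $\tau(m+s)-(d-s)>1$, i.e. when $s>\frac{(\tau+1)d-m\tau}{\tau+1}$... let me instead write it as $s<\frac{n+1}{\tau+1}-m$ is the \emph{failure} threshold, so the series converges for $s>\frac{(d+1)(\tau+1)-(m+1)}{\tau+1}$; more cleanly, $\mathcal H^s(\mathcal S_n(\tau,\bm\theta)\cap\mathcal L)=0$ as soon as $s$ exceeds $s(\tau):=\frac{n+1}{\tau+1}-m$ \emph{and} $s$ lies in the admissible range \eqref{RangeOfS-Conv}, which for $\psi(q)=q^{-\tau}$ translates to $\tau<\frac1\omega$ (this is exactly where the first regime $\frac1n\le\tau\le\frac1\omega$ comes from). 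By the definition of Hausdorff dimension this gives $\dim(\mathcal S_n(\tau,\bm\theta)\cap\mathcal L)\le s(\tau)=\frac{n+1}{\tau+1}-m$ throughout the first regime. For the second and third regimes, where $\tau$ exceeds $\frac1\omega$ and Theorem \ref{JTC} no longer applies, I would instead go back to the counting estimates directly: cover $\mathcal S_n(\tau,\bm\theta)\cap\mathcal L$ by the neighborhoods of rational points near $\mathcal L$ supplied by Theorem \ref{UpperBound} (and its dual Theorem \ref{DualUpperBound} to exploit the simultaneous exponent $\sigma=\sigma(A)$), run the standard covering/Hausdorff-measure bookkeeping over dyadic ranges of the denominator $q$, and optimize. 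The two bounds $\frac{(d+1)\omega-m}{(\tau+1)\omega}$ and $\frac{m(\sigma-\tau)}{\tau+1}$ should emerge as the two competing exponents coming from the $\omega$-driven count and the $\sigma$-driven count respectively, with the break-point $\tau=\frac1\omega+\sigma-\frac{d+1}m$ being exactly where the two expressions cross; the final regime $\tau\ge\sigma$ is immediate since then $\mathcal L\not\subset\mathcal S_n(\tau,\bm\theta)$ generically and in fact the set is empty by the definition of $\sigma(A)$ (no rational point is that close).

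For the exact formula in the homogeneous case, I would complement the upper bound $\dim\le\frac{n+1}{\tau+1}-m$ (valid in the first regime, hence in particular for $\frac1n\le\tau<\tau_0\le\frac1{\max\{m,\omega\}}\le\frac1\omega$) with a lower bound of the same size. This is where Theorem \ref{JTD} enters: take $s$ slightly below $s(\tau)=\frac{n+1}{\tau+1}-m$ and check that (i) $s$ falls in the divergence range \eqref{RangeOfS-Div}, which for $\psi(q)=q^{-\tau}$ amounts to $\tau<\tau_0$ — precisely the hypothesis of \eqref{DimFormula} — and (ii) the series $\sum_q q^{-\tau(m+s)}q^{d-s}$ diverges, which holds for $s<s(\tau)$. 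Theorem \ref{JTD} then yields $\mathcal H^s(\mathcal S_n(\tau)\cap\mathcal L)=\infty$, so $\dim(\mathcal S_n(\tau)\cap\mathcal L)\ge s$; letting $s\uparrow s(\tau)$ gives $\dim\ge\frac{n+1}{\tau+1}-m$, and combined with the upper bound this proves \eqref{DimFormula}. One has to double-check the edge constraint $s\le d$ in both \eqref{RangeOfS-Conv} and \eqref{RangeOfS-Div}: since $\tau\ge\frac1n$ forces $s(\tau)\le\frac{n+1}{1/n+1}-m=n-m=d$, the relevant $s$ never exceeds $d$, so this is automatic.

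The main obstacle I expect is the second and third regimes of the upper bound, where Theorem \ref{JTC} is unavailable and one must argue from the counting theorems by hand. The delicate points there are: organizing the cover so that the contribution of rational points at denominator scale $q\sim Q$ is controlled simultaneously by the ``dual'' count (governed by $\omega$) and the ``simultaneous'' count (governed by $\sigma$); handling the inhomogeneous shift $\bm\theta$ uniformly (the counting theorems are asserted with a shift, so this should go through, but the bookkeeping is heavier); and verifying that the optimization over the dyadic scales really produces the stated piecewise-linear-in-$\frac1{\tau+1}$ exponents with the claimed break-points, rather than something weaker. A secondary technical nuisance is making sure the transition values of $\tau$ (namely $\frac1\omega$ and $\frac1\omega+\sigma-\frac{d+1}m$) are consistent — in particular that $\frac1\omega+\sigma-\frac{d+1}m\ge\frac1\omega$, i.e. $\sigma\ge\frac{d+1}m$, which follows from the Dirichlet bound $\sigma(A)=\omega(A^{\mathrm T})\ge\frac{d+1}m$ — so that the four cases genuinely tile the interval $[\frac1n,\infty)$ without gaps or overlaps. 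Once the scale-by-scale counting lemma is in place, the rest is routine Hausdorff-measure summation.
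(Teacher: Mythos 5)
Your proposal matches the paper's proof: the upper bounds are obtained exactly as you outline, by a dyadic covering/Hausdorff--Cantelli argument fed by the four regimes of the counting estimates in Theorems \ref{UpperBound}(a) and \ref{DualUpperBound} (the $\omega$-driven and $\sigma$-driven bounds crossing at $\tau=\frac1\omega+\sigma-\frac{d+1}m$), and the exact formula \eqref{DimFormula} follows by pairing the first upper bound with the lower bound from Theorem \ref{JTD} precisely as you describe. One small correction: in the regime $\tau\ge\sigma$ the set $\mathcal{S}_n(\tau,\bm{\theta})\cap\mathcal{L}$ need not be empty (in the homogeneous case every rational point of $\mathcal{L}$ lies in $\mathcal{S}_n(\tau)$ for all $\tau$); the paper instead notes it is finite via Theorem \ref{DualUpperBound}, and in any case your covering argument with the bound $N_A(Q,\delta,\bm{\theta})\ll Q^{\varepsilon}$ for $\delta\le Q^{-\sigma}$ already yields dimension $0$ there without needing emptiness.
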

As usual, the equation regarding $\dim\mathcal{S}_n(\tau)\cap\mathcal{L}$ follows from sharp upper bound and  lower bound estimates coming from Theorems \ref{JTC} and \ref{JTD} respectively in a straightforward manner. So it only remains to prove the first part of Theorem \ref{Dimension}, which we will take up in \S \ref{ProofDimension}.

\begin{rem}
When $\tau>\sigma$, Theorem \ref{DualUpperBound} below actually allows us to show the stronger conclusion that $\mathcal{S}_n(\tau,\bm{\theta})\cap\mathcal{L}$ is a finite set, hence its Hausdorff zero-measure is bounded. 
\end{rem}

\begin{rem}\label{ExactHausdorffDim}
A lower bound of the form
\begin{equation}\label{DimLowerBound}
\dim\mathcal{S}_n(\tau)\cap\mathcal{M}\ge\frac{n+1}{\tau+1}-m\quad {when }\;\;\frac1n\le\tau<\frac1m,
\end{equation}
has been obtained by Beresnevich, Lee, Vaughan, and Velani for arbitrary $C^2$ manifold $\mathcal{M}$ \cite{BLVV17} based on the mass transference principle of Beresnevich and Velani \cite{BV06}.  When $\omega>m$, there is a discrepancy between the ranges of $\tau$ in \eqref{DimFormula} and \eqref{DimLowerBound}. It is reasonable to suspect that \eqref{DimFormula} holds for $1/n\le \tau\le1/\omega$, that is, the first upper bound in Theorem \ref{Dimension} is actually an equality. Moreover we also believe this is the largest range of $\tau$ for which \eqref{DimFormula} holds. It is not clear whether the second and third upper bounds in Theorem \ref{Dimension} are  sharp, and if not what  the exact formula for $\dim\mathcal{S}_n(\tau)\cap\mathcal{L}$ looks like when $1/\omega<\tau<\sigma$.
\end{rem}

\section{Counting rational points near affine subspaces}
\subsection{Main counting results}\label{CountingResults}
It is well known that obtaining sharp bounds for the number of rational points near a manifold lies at the core of establishing Khintchine and Jarn\'{i}k type theorems for the manifold.

Let $\mathcal{M}$ be a $d$-dimensional submanifold  of $\mathbb{R}^n$,  parametrized in the Monge form $$\big(\bm{x},\bm{f}(\bm{x})\big)\in\R^d\times\R^m,\quad \bm{x}\in\mathcal{U}$$ 
with 
$$
\mathcal{U}:=[0,1]^d.
$$
 Then for  $Q\ge1$ and $0<\delta\le1/2$, consider the  counting function
\begin{equation}\label{CountingFunctionGen}
N_{\mathcal{M}}(Q,\delta):=\#\Big\{\bm{a}/q\in\mathcal{U}\cap\mathbb{Q}^d\bigm|1\le q\le Q, \,\|q\bm{f}(\bm{a}/q)\|<\delta\Big\},
\end{equation}
which counts the number of rational points of denominator $q\le Q$  that lies within distance $O(\delta/q)$ to $\mathcal{M}$. 
The central problem here is to give  good estimates  for this counting function. Of course, if $\mathcal{M}$ is a rational affine subspace, then $N_{\mathcal{M}}(Q,\delta)\asymp Q^{d+1}$ which is sort of the worst case scenario. In practice, one seeks estimates better than the trivial bound $O(Q^{d+1})$, assuming various geometric, analytic, and arithmetic conditions on $\mathcal{M}$. 
One commonly used condition is the notion of nondegeneracy. We say $\mathcal{M}$ is \emph{$l$-nondegenerate} at $\bm{x}$ if the partial derivatives of the map $\bm{x}\to\big(\bm{x},\bm{f}(\bm{x})\big)$ at $\bm{x}$ of order up to $l$ generate $\R^n$. In geometrical terms, this means exactly that the order of contact that $\mathcal{M}$ has  with any hyperplane is at most $l$. Roughly speaking, nondegeneracy  requires that $\mathcal{M}$ not being too flat locally.

A simple probabilistic heuristic shows that one would expect  
$$N_{\mathcal{M}}(Q,\delta)\asymp \delta^mQ^{d+1},$$
if rational points somehow distribute in a ``random" fashion near $\mathcal{M}$. As before $m=n-d$ denotes the codimension of $\mathcal{M}$ in $\R^n$. It is not hard to see from concrete examples that we cannot expect this randomness to dominate all the way, and that it has to give way to the structure caused by the underlying manifold when $\delta$ drops below a certain fine scale.  Unfortunately, we do not currently have a full picture for the expected behavior of the counting function, as this phenomenon is quite manifold specific and hard to capture in simple terms. However, when $\delta$ is not too small, we put forward the following plausible conjecture.
\begin{conj}\label{MainConjecture}
Suppose that a $d$-dimensional submanifold $\mathcal{M}$ of $\R^n$ is $l$-nondegenerate everywhere with $l=m+1$. Then
$$
N_\mathcal{M}(Q,\delta)\ll_\mathcal{M} \delta^mQ^{d+1}\quad \text{when }\delta\ge Q^{-\frac1m+\varepsilon}\text{ and }Q\to\infty.
$$
\end{conj}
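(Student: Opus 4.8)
Conjecture \ref{MainConjecture} lies beyond the large-sieve machinery of the present paper, which is built for flat subspaces; for a genuinely curved $\mathcal{M}$ the natural line of attack is the Fourier-analytic circle method that has proved successful for planar curves \cite{VV06,Hua20a} and, in higher dimensions, in the work of Beresnevich and Yang \cite{BY}. The significance of the conjectural range $\delta\ge Q^{-1/m+\varepsilon}$ is that one stays above the scale at which the arithmetic of $\mathcal{M}$ takes over, so that harmonic analysis powered by nondegeneracy alone ought to reach the ``random'' count $\delta^m Q^{d+1}$.

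First I would smooth and apply Poisson summation in the numerator variables. Fix $\chi\colon\R\to\R_{\ge0}$ with $\chi\ge1$ on $[-1,1]$ and $\widehat\chi$ of rapid decay, so that
\[
N_\mathcal{M}(Q,\delta)\ \le\ \sum_{q\le Q}\ \sum_{\bm{a}/q\in\mathcal{U}}\ \prod_{j=1}^{m}\ \sum_{p_j\in\Z}\chi\!\Bigl(\tfrac{qf_j(\bm{a}/q)-p_j}{\delta}\Bigr),
\]
and Poisson summation in the $p_j$ turns the product of $p$-sums into $\delta^m\sum_{\bm{h}\in\Z^m}\widehat\chi^{\otimes m}(\delta\bm{h})\,e\bigl(\bm{h}\cdot q\bm{f}(\bm{a}/q)\bigr)$. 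The frequency $\bm{h}=\bm{0}$ contributes the expected main term $\asymp\delta^m\sum_{q\le Q}q^d\asymp\delta^m Q^{d+1}$; as $\widehat\chi$ is essentially supported on $|\delta\bm{h}|\ll1$ there remain only $\ll\delta^{-m}$ frequencies, and it suffices to show that
\[
S(\bm{h})\ :=\ \sum_{q\le Q}\ \sum_{\bm{a}/q\in\mathcal{U}}e\bigl(\bm{h}\cdot q\bm{f}(\bm{a}/q)\bigr)
\]
beats the trivial bound $Q^{d+1}$ by roughly a factor $\delta^m$, on average over $\bm{0}\neq\bm{h}$.

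After dyadically splitting $q\sim Q'$ and $|\bm{h}|\sim H\ll\delta^{-1}$, one would apply Poisson summation a second time in $\bm{a}$, writing $\bm{x}=\bm{a}/q$; up to boundary terms the $\bm{a}$-sum becomes $Q'^{\,d}$ times a sum over a dual lattice $\bm{k}\in\Z^d$ of oscillatory integrals
\[
I(\bm{h},\bm{k})\ =\ \int_{\mathcal{U}}e\bigl(q\,(\bm{h}\cdot\bm{f}(\bm{x})-\bm{k}\cdot\bm{x})\bigr)\,w(\bm{x})\,d\bm{x},
\]
whose critical points are the $\bm{x}$ at which the hyperplane through $\mathcal{M}$ with slope determined by $\bm{h}$ is tangent. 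The hypothesis $l=m+1$ enters here: order of contact $\le m+1$ with every hyperplane means that for each unit $\bm{h}$ the scalar phase $\bm{x}\mapsto\bm{h}\cdot\bm{f}(\bm{x})$ has no flat critical locus of order exceeding $m+1$, which (after a compactness argument on $\mathcal{M}$ and, if need be, a decomposition of $\mathcal{U}$ into pieces on which one directional second derivative is bounded below) should yield a uniform sublevel-set bound $|\{\bm{x}\in\mathcal{U}:|(D\bm{f}(\bm{x}))^{\mathrm T}\bm{h}-\bm{k}|<\eta\}|\ll\eta^{1/(m+1)}$, hence enough decay for $I(\bm{h},\bm{k})$. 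Summing the resulting geometric series over the dyadic ranges and over the $\bm{k}$-lattice --- possibly via a self-improving iteration in the spirit of \cite{Hua20a} --- should then recover exactly the saving $\delta^m Q^{d+1}$ precisely when $\delta\ge Q^{-1/m+\varepsilon}$.

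The main obstacle, and the reason the conjecture is open for $m\ge2$, is securing these sublevel-set and oscillatory-integral estimates \emph{uniformly} in $\bm{h}$: a single phase can always be desingularised, but here one needs simultaneous control over the whole family $\{\bm{h}\cdot\bm{f}\}_{\bm{h}\in S^{m-1}}$, and near the directions in which the $\le(m+1)$-contact degenerates the implied constants threaten to blow up, spoiling the average over the $\ll\delta^{-m}$ frequencies. Equivalently, in codimension $m>1$ the manifold may be strongly curved in some normal directions and nearly flat in others, while the single scalar threshold $\delta\ge Q^{-1/m+\varepsilon}$ must absorb this anisotropy all at once; meeting it seems to demand an induction on the codimension, peeling off one normal direction at a time, that is not presently available. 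This is precisely the difficulty that the large sieve of this paper sidesteps for affine subspaces at the price of a Diophantine hypothesis on $A$, but which for a genuinely curved $\mathcal{M}$ must be confronted directly.
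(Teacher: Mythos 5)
The statement you were assigned is Conjecture \ref{MainConjecture}: the paper states it as an open problem and offers no proof, only a survey of its current status (planar curves via \cite{VV06,Hua15}, hypersurfaces with nonvanishing Gaussian curvature via \cite{Hua20b}, and some higher-codimension cases under Hessian rank conditions via \cite{SY22,Mun}). Your submission correctly treats it as such: it is a programme sketch rather than a proof, and you are explicit that the decisive estimates are missing. So the genuine gap is simply that no proof exists --- not in the paper, and not in your proposal --- and you have identified it yourself.

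As a research sketch, what you write is a faithful outline of the Fourier-analytic route the cited works actually follow: smoothing and Poisson summation in the numerator variables to isolate the zero frequency, which contributes $\asymp\delta^m\sum_{q\le Q}q^d\asymp\delta^mQ^{d+1}$, reduction to $\ll\delta^{-m}$ exponential sums $S(\bm{h})$, a second Poisson summation in $\bm{a}$, and stationary-phase plus sublevel-set analysis of the family of scalar phases $\bm{h}\cdot\bm{f}$. The obstruction you name --- uniformity of these estimates over the whole sphere of normal directions, where the $(m+1)$-fold contact condition can degenerate anisotropically --- is indeed the recognized reason the conjecture is open, and it is also why this paper sidesteps curvature entirely for affine subspaces by trading it for a Diophantine hypothesis on $A$ and the large sieve. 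Two factual corrections to your closing discussion: the conjecture is open even for $m=1$ in general, since the proved hypersurface case needs the Hessian to have full rank while $2$-nondegeneracy only requires rank at least one; and for curves it is not known for any single curve of codimension at least two, so the difficulty is not confined to ``$m\ge2$ on average.'' If you want to convert parts of the sketch into theorems, the second Poisson summation step is where \cite{Hua20b} and \cite{SY22} supply the required uniform oscillatory-integral input, but only under curvature or rank hypotheses strictly stronger than $l$-nondegeneracy with $l=m+1$.
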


A matching lower bound has been established by Beresnevich for nondegenerate analytic manifolds \cite{Ber12}, and the analyticity condition has recently been removed for curves \cite{BVVZ21}. So we may even make a more ambitious conjecture that  the above upper bound can be turned into an asymptotic formula 
\begin{equation}\label{AsympConjecture}
N_\mathcal{M}(Q,\delta)\sim c_\mathcal{M} \delta^mQ^{d+1}\quad\text{when }\delta\ge Q^{-\frac1m+\varepsilon}\text{ and }Q\to\infty.
\end{equation}

Now we discuss the current status of Conjecture \ref{MainConjecture}. For planar curves, it has been proved  by Vaughan and Velani \cite{VV06} and in the stronger asymptotic form \eqref{AsympConjecture} by the author \cite{Hua15}. To the best of our knowledge, it is not even known for any single curve of codimension at least two, except for some progress on space curves by the author \cite{Hua19}. We have a bit more success for manifolds of higher dimension. The conjecture has been proved by the author for hypersurfaces with nonvanishing Gaussian curvature \cite{Hua20b}. Note that this result does not exhaust all hypersurfaces addressed in Conjecture \ref{MainConjecture}. The conjectural 2-nondegeneracy condition for hypersurfaces is equivalent to saying that the rank of the Hessian matrix is at least one, while the nonvanishing curvature condition requires that the Hessian matrix has full rank. So even in the case of hypersurfaces, there remains some work to be done in order to completely resolve the conjecture.  The method in \cite{Hua20b} has been subsequently extended to cover  some  surfaces of codimension higher than 1 by Schindler and Yamagishi \cite{SY22}, and by Munkelt \cite{Mun}. It is in particular shown in these works that for manifolds satisfying certain rank conditions on their Hessian matrices, Conjecture \ref{MainConjecture} may hold for $\delta$ beyond the conjectured range. 

We also remark in passing that there is some close connection between the counting problem in question and Serre's dimension growth conjecture, originally stated in the context of projective varieties. Actually one may view Conjecture \ref{MainConjecture} as a perturbed version of the dimension growth conjecture for smooth manifolds, which includes the original dimension growth conjecture as a special case. We do not plan to go into details in this regard, and only refer the interested readers to the above cited papers \cite{Hua20b,SY22,Mun} for the precise statements.

Thus, the primary objective of this section is to address the analogue of Conjecture \ref{MainConjecture} for affine subspaces. We will demonstrate that our counting theorems, which we will unveil below, lead to all the novel results about diophantine approximation on affine subspaces in \S \ref{DioExpExtremal} and \S\ref{KhintchineJarnik}.

For any matrix $A\in\R^{(d+1)\times m}$ and for an inhomogeneous shift
$$
\bm{\theta}=(\theta_1,\ldots,\theta_d,\theta_{d+1},\ldots,\theta_{d+m})=(\bm{\theta}^{(1)}, \bm{\theta}^{(2)})\in\R^{d}\times\R^m,
$$
define
$$N_A(Q, \delta,\bm{\theta}) := \#\left\{(q,\bm{a})\in\Z^{d+1}:  \left\|(q, \bm{a}+\bm{\theta}^{(1)})A-\bm{\theta}^{(2)}\right\| < \delta, \,|(q,\bm{a})|< Q \right\}$$
and
$$N_{A}'(Q, \delta,\bm{\theta}) :=\sup_{q\in\R} \#\left\{\bm{a}\in\Z^d:  \left\|(q, \bm{a}+\bm{\theta}^{(1)})A-\bm{\theta}^{(2)}\right\| < \delta,\, |\bm{a}|< Q \right\}.$$
It is easy to see that the first counting functions is the inhomogeneous analogue of \eqref{CountingFunctionGen} for affine subspaces, while the second function is a variation of the first which only counts rational points with a fixed denominator $q$.

As mentioned above, flatness in general works against the prospect of proving desirable estimates.  It can however be turned to work to our advantage if certain diophantine conditions
are on our side. For instance, this phenomenon has already appeared in the work of Hardy and Littlewood \cite{HL22} dating back to the 1920s, where they have extensively studied the lattice point problem for  right triangles. The lattice rest (the difference between the actual count and the expected count of lattice points) in this problem is intimately related to the bad approximability of the slope of the hypotenuse.  Back to our problem, it is therefore expected that the counting functions above should depend on the diophantine properties of $A$ in a crucial way.

 Before  stating   the main counting results in this paper, we need one more definition. For a decreasing  function $\phi$: $\R_{>0}\to\R_{>0}$, we say $A$ is $\phi$-badly approximable if for all nonzero $\bm{q}\in\Z^m$ we have
$$
\left\|A\bm{q}^\mathrm{T}\right\|\ge\phi(|\bm{q}|).
$$
Apparently, the notion of $\phi$-bad approximability generalizes the diophantine exponent defined earlier, and it can control the diophantine property of $A$ in a more precise manner. 

\begin{thm}\label{UpperBound-Phi}   Let $\phi$: $\R_{>0}\to\R_{>0}$ be a decreasing  function, $\bm{\theta}\in\R^{d+m}$, and denote 
\begin{equation}\label{UpperConstant}
C_{d,m}:=8^{d}\pi^{2m}.
\end{equation}
\begin{enumerate}[(a)]
\item
Suppose that  $A\in \R^{(d+1)\times m}$ is  $\phi$-badly approximable.
Then for any positive integer $Q$ and  positive real number $\delta$ we have 
$$N_A(Q, \delta,\bm{\theta})\le C_{d+1,m} \,\delta^m\max\left\{Q,\,\frac1{\phi\left(\delta^{-1}\right)}\right\}^{d+1}.$$

\item
If instead, we assume the stronger condition that $A'$ is $\phi$-badly approximable,  then 
$$
N_{A}'(Q, \delta,\bm{\theta})\le C_{d,m}\, \delta^m\max\left\{Q,\,\frac1{\phi\left(\delta^{-1}\right)}\right\}^{d}.
$$
\end{enumerate}
\end{thm}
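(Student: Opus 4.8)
The plan is to prove Theorem \ref{UpperBound-Phi} via the multidimensional large sieve inequality and its dual form, which is advertised in the abstract as the main tool. The key observation is that the counting functions $N_A$ and $N_A'$ count lattice points $(q,\bm{a})$ for which the $m$ linear forms $(q,\bm{a}+\bm{\theta}^{(1)})A - \bm{\theta}^{(2)}$ all land within $\delta$ of the integer lattice $\Z^m$. Passing to additive characters, I would write the indicator of the condition $\|\cdot\|<\delta$ as a Fourier series on the torus $\R^m/\Z^m$ (or, more robustly, use a Beurling--Selberg-type majorant so that the Fourier coefficients decay and are supported in a box). This converts $N_A(Q,\delta,\bm{\theta})$ into a sum over frequency vectors $\bm{q}\in\Z^m$ of a character sum over the box $|(q,\bm{a})|<Q$, weighted by $\widehat{\chi}(\bm{q})$; the diagonal term $\bm{q}=\bm{0}$ produces the main term $\asymp \delta^m Q^{d+1}$, and the off-diagonal terms $\bm{q}\neq\bm{0}$ are where the $\phi$-bad approximability of $A$ enters.

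First I would isolate the diagonal term, which gives exactly the expected order $\delta^m Q^{d+1}$ up to the explicit constant; tracking constants carefully is what produces the $8^d\pi^{2m}$ in \eqref{UpperConstant} (the $\pi^{2m}$ presumably coming from the Fejér-kernel/Selberg majorant in $m$ dimensions, the $8^d$ from the geometry of the box in the remaining $d+1$ coordinates — though note the claimed constant is $C_{d+1,m}$ in part (a), consistent with having $d+1$ "free" integer coordinates $(q,\bm{a})$). For the off-diagonal part, each term involves a sum of $e(\langle \text{something}, A\bm{q}^{\mathrm T}\rangle)$-type phases over the lattice points in the box; the phase is governed by $A\bm{q}^{\mathrm T}$, and since $A$ is $\phi$-badly approximable we have $\|A\bm{q}^{\mathrm T}\|\ge \phi(|\bm{q}|)$, so these phases are quantitatively non-trivial and the geometric sums are small unless $|\bm{q}|$ is large. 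The role of the $\max\{Q, 1/\phi(\delta^{-1})\}$ is then transparent: if $Q$ is already $\ge 1/\phi(\delta^{-1})$ the main term dominates outright, while if $Q$ is smaller one pays the price $1/\phi(\delta^{-1})$, which is precisely the threshold at which the off-diagonal cancellation can no longer be guaranteed because the relevant frequencies $|\bm{q}|\lesssim \delta^{-1}$ can have $\|A\bm{q}^{\mathrm T}\|$ as small as $\phi(\delta^{-1})$, comparable to $\delta$. This is exactly the manifestation of the "flatness helps if diophantine conditions are on our side" philosophy quoted before the theorem.

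The cleanest route for the off-diagonal estimate is to invoke the large sieve directly rather than summing geometric series by hand: the dual large sieve inequality bounds $\sum_j |\sum_{\bm{n}} c_{\bm{n}} e(\bm{n}\cdot \bm{\xi}_j)|^2$ in terms of $\sum|c_{\bm{n}}|^2$ provided the points $\bm{\xi}_j$ are $\delta$-separated, and $\phi$-bad approximability of $A$ is precisely a separation statement for the points $A\bm{q}^{\mathrm T}$ on the torus. Part (b), the fixed-denominator version, is the same argument with $q$ held fixed rather than summed, so one loses one factor of $Q$ and works with $A'$ rather than $A$ (since the top row $\bm{\beta}$ of $A$ only contributes the constant term $q\bm{\beta}$ which is frozen); this explains why the hypothesis must be upgraded to "$A'$ is $\phi$-badly approximable" and why the exponent drops from $d+1$ to $d$ and the constant from $C_{d+1,m}$ to $C_{d,m}$.

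\textbf{Main obstacle.} The hard part will be extracting genuine cancellation in the off-diagonal sum uniformly in the inhomogeneous shift $\bm{\theta}$ and with completely explicit constants — in particular making the large sieve input interface cleanly with the Selberg/Fejér majorant so that only the $\delta$-separation of $\{A\bm{q}^{\mathrm T}\bmod \Z^m\}$ (equivalently $\phi$-bad approximability) is used, and verifying that the "bad" frequency range contributes no more than the stated $1/\phi(\delta^{-1})$ power. The shift $\bm{\theta}$ should only translate the character sums and hence not affect their moduli, so uniformity in $\bm{\theta}$ ought to be automatic once the argument is set up with absolute values; but confirming that, and pinning down the constant, is the delicate bookkeeping that the full proof must carry out.
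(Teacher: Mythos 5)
Your plan matches the paper's proof essentially exactly: the paper majorizes the indicator of $\|\cdot\|<\delta$ by a product of $m$ Fej\'er kernels (producing the $\pi^{2m}$), applies the large sieve with the separation of the points $\{A\bm{j}^{\mathrm{T}}\bmod 1\}$ supplied by $\phi$-bad approximability — which yields the factor $\max\{Q,\,\phi(\delta^{-1})^{-1}\}^{d+1}$ in one shot, with no diagonal/off-diagonal split — and proves (b) by freezing $q$ so that only $A'$ governs the separation, exactly as you describe. One cosmetic correction: in part (a) the separated points $A\bm{j}^{\mathrm{T}}$ index the \emph{inner} sum and the integer box $(q,\bm{a})$ indexes the \emph{outer} sum, so it is the dual form of the large sieve that is invoked there, with the roles reversed relative to your displayed schematic.
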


The following theorem is a special case of Theorem \ref{UpperBound-Phi} in terms of the diophantine exponent of $A$.
 
 \begin{thm}\label{UpperBound}
Let  $A$ be a $(d+1)\times m$ real matrix, $\bm{\theta}\in\R^{d+m}$, $\varepsilon$ be a positive number, and $C_{d,m}$ be the constant defined in \eqref{UpperConstant}.
 \begin{enumerate}[(a)]
     \item 
     Suppose that $\omega(A)=\omega$. Then  for $\delta\ge Q^{-\frac1\omega+\varepsilon}$ and positive integer $Q\ge Q_0(A,\varepsilon)$  we have
     $$
     N_A(Q, \delta,\bm{\theta})\le C_{d+1,m}\,\delta^mQ^{d+1}.
     $$
   Or equivalently, for $\delta> 0$  and positive integer $Q$ we have
      $$
     N_A(Q, \delta,\bm{\theta})\le \max\Big\{C_{d+1,m}\delta^mQ^{d+1},\,C(A,\varepsilon)Q^{d+1-\frac{m}\omega+\varepsilon}\Big\}.
     $$

     \item
     Suppose that $\omega(A')=\omega$. Then  for $\delta\ge Q^{-\frac1\omega+\varepsilon}$  and   positive integer $Q\ge Q_0(A',\varepsilon)$ we have
     $$
     N_{A}'(Q, \delta,\bm{\theta})\le C_{d,m}\,\delta^mQ^{d}.
     $$
 \end{enumerate}
  Here $Q_0(A,\varepsilon)$ and $C(A,\varepsilon)$ are positive constants  depending on $A$ and $\varepsilon$.
 \end{thm}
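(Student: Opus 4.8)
The plan is to obtain Theorem~\ref{UpperBound} as a direct corollary of Theorem~\ref{UpperBound-Phi}, by feeding in a pure power‑law function $\phi$ that encodes the diophantine exponent. I treat part~(a); part~(b) is proved in exactly the same way, replacing $A$ by $A'$ throughout and invoking Theorem~\ref{UpperBound-Phi}(b) in place of~(a), the only change being that the exponent $d+1$ becomes $d$ to reflect that $N_A'$ ranges only over $\bm{a}\in\Z^d$ with the denominator held fixed. We may assume $\omega<\infty$, the case $\omega=\infty$ being trivial in the stated ranges since then $C_{d+1,m}\delta^mQ^{d+1}$ already dominates the (trivially bounded) count. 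Fix an auxiliary parameter $\varepsilon_1=\varepsilon_1(\varepsilon,\omega)>0$, to be pinned down in the next step. Since $\omega(A)=\omega$, the inequality $\|A\bm{q}^\mathrm{T}\|<|\bm{q}|^{-\omega-\varepsilon_1}$ has only finitely many solutions $\bm{q}\in\Z^m\setminus\{\bm{0}\}$, and because $A$ is diophantine each such $\bm{q}$ satisfies $\|A\bm{q}^\mathrm{T}\|>0$; hence there is a constant $c=c(A,\varepsilon_1)\in(0,1]$ with $\|A\bm{q}^\mathrm{T}\|\ge c\,|\bm{q}|^{-\omega-\varepsilon_1}$ for all nonzero $\bm{q}\in\Z^m$. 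In other words $A$ is $\phi$-badly approximable with the decreasing function $\phi(t):=c\,t^{-\omega-\varepsilon_1}$.

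Plugging this $\phi$ into Theorem~\ref{UpperBound-Phi}(a), and noting that $1/\phi(\delta^{-1})=c^{-1}\delta^{-\omega-\varepsilon_1}$, we obtain for every positive integer $Q$ and every $\delta>0$
$$
N_A(Q,\delta,\bm{\theta})\ \le\ C_{d+1,m}\,\delta^m\,\max\big\{\,Q,\ c^{-1}\delta^{-\omega-\varepsilon_1}\,\big\}^{d+1}.
$$
To reach the first displayed bound of Theorem~\ref{UpperBound}(a), I choose $\varepsilon_1$ so small that $(\tfrac1\omega-\varepsilon)(\omega+\varepsilon_1)\le 1-\tfrac12\varepsilon\omega$, which is possible because the left‑hand side tends to $1-\varepsilon\omega$ as $\varepsilon_1\to0$. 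Then whenever $\delta\ge Q^{-1/\omega+\varepsilon}$ we get $c^{-1}\delta^{-\omega-\varepsilon_1}\le c^{-1}Q^{\,1-\varepsilon\omega/2}\le Q$, the last inequality holding once $Q\ge Q_0(A,\varepsilon)$ for a suitable power $Q_0(A,\varepsilon)$ of $1/c$. Hence the maximum above equals $Q$ and the estimate collapses to $N_A(Q,\delta,\bm{\theta})\le C_{d+1,m}\,\delta^mQ^{d+1}$.

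For the equivalent reformulation valid for all $\delta>0$ and all positive integers $Q$, I run the previous step with $\varepsilon/m$ in place of $\varepsilon$ and split into cases. If $\delta\ge Q^{-1/\omega+\varepsilon/m}$ and $Q\ge Q_0(A,\varepsilon/m)$, the bound just proved gives $N_A(Q,\delta,\bm{\theta})\le C_{d+1,m}\delta^mQ^{d+1}$, which is the first term of the asserted maximum. If instead $\delta<Q^{-1/\omega+\varepsilon/m}$, then since $\delta\mapsto N_A(Q,\delta,\bm{\theta})$ is non‑decreasing I may enlarge $\delta$ up to $Q^{-1/\omega+\varepsilon/m}$ and apply the same bound, obtaining $N_A(Q,\delta,\bm{\theta})\le C_{d+1,m}Q^{\,d+1-m/\omega+\varepsilon}$, which is the second term. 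Finally the finitely many $Q<Q_0(A,\varepsilon/m)$ are swept into the constant $C(A,\varepsilon)$ using the trivial bound $N_A(Q,\delta,\bm{\theta})\le(2Q)^{d+1}$, together with $d+1-m/\omega+\varepsilon>0$ (valid because $\omega\ge m/(d+1)$ by the Dirichlet bound \eqref{DirichletExponent}), so that $Q^{\,d+1-m/\omega+\varepsilon}\ge1$. Conversely, the first displayed bound is plainly a special case of the second, so the two forms are genuinely equivalent.

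Since Theorem~\ref{UpperBound-Phi} is taken as given, this deduction involves no real analytic difficulty: the whole weight of the argument — the (dual) multidimensional large sieve input — is packaged inside Theorem~\ref{UpperBound-Phi}. The only point requiring care is the elementary bookkeeping: choosing $\varepsilon_1$ and $Q_0$ as explicit functions of $\varepsilon$ and $\omega$ so that the power of $Q$ produced by the $1/\phi$ term is strictly dominated by $Q$ on the prescribed range, and tracking the harmless factor $m$ when passing between the two equivalent formulations. I therefore expect no genuine obstacle in this particular deduction.
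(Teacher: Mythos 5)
Your proposal is correct and follows essentially the same route as the paper: convert $\omega(A)=\omega$ into $\phi$-bad approximability for a power-law $\phi(t)=c\,t^{-\omega-\varepsilon_1}$ (this is exactly Lemma \ref{PsiBadAppr}) and feed it into Theorem \ref{UpperBound-Phi}, tuning $\varepsilon_1$ so the $1/\phi(\delta^{-1})$ term is dominated by $Q$ on the stated range. The only (cosmetic) difference is in the second formulation, where the paper picks $\varepsilon'=\omega\varepsilon/(d+1)$ and compares at $\delta_0=Q^{-1/\omega}$ while you rerun the first bound with $\varepsilon/m$ and use monotonicity at $\delta=Q^{-1/\omega+\varepsilon/m}$; both are valid bookkeeping.
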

 
 In Theorem \ref{UpperBound}(a), we establish an analogue of Conjecture \ref{MainConjecture} for affine subspaces. The only difference here is that the range of $\delta$ should depend on the diophantine exponent of $A$. Moreover, we believe that the upper bound   $N_A(Q, \delta,\bm{\theta})\ll\delta^mQ^{d+1}$ no longer holds when $\delta$ drops below the critical threshold $Q^{-\frac1\omega}$. 
 
 Previously, the special case when $A\in\R^{1\times n}$ (in this case $\mL_A$ is a point) has been proved in \cite[Lemma 7]{RSS17} by geometry of numbers. The general problem has  been treated earlier in \cite{HL21} using  assumptions on the semi-multiplicative exponent $\omega^\times$ which is stronger than those assumed in Theorem \ref{UpperBound}, and yet the result therein is considerably weaker. 
 
 Our next theorem is a companion result to Theorem \ref{UpperBound}, using the  exponent  $\sigma(A)=\omega(A^\mathrm{T})$ in lieu of $\omega(A)$. 
 \begin{thm}\label{DualUpperBound}
  Let  $A$ be a $(d+1)\times m$ real matrix with $\sigma(A)=\sigma<\infty$, $\bm{\theta}\in\R^{d+m}$, and $\varepsilon$ be a positive number. Then there is some positive constant $C^*(A,\varepsilon)$ depending on $A$ and $\varepsilon$ such that  for $Q\in\N$ and $\delta>0$ we have
     $$
     N_A(Q, \delta,\bm{\theta})\le \max\Big\{\pi^{2m},\,C^*(A,\varepsilon)\delta^m Q^{m\sigma+\varepsilon}\Big\}.
     $$
 \end{thm}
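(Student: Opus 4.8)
The plan is to reduce the inhomogeneous count to a homogeneous one by a standard differencing step, and then to bound the homogeneous count by an elementary separation‑and‑packing argument powered by the dual Diophantine exponent $\sigma=\omega(A^{\mathrm T})$. First one disposes of the range $\delta\ge\delta_0(A,\varepsilon)$, where $\delta_0$ is a small constant fixed below: there the trivial bound $N_A(Q,\delta,\bm\theta)\le(2Q)^{d+1}$ together with the Dirichlet inequality $\sigma\ge(d+1)/m$ (i.e. $m\sigma\ge d+1$, a special case of \eqref{DirichletExponent}) already gives $N_A(Q,\delta,\bm\theta)\ll_{A,\varepsilon}\delta^mQ^{m\sigma}$. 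So assume $\delta<\delta_0(A,\varepsilon)$. If $N_A(Q,\delta,\bm\theta)=0$ we are done; otherwise fix one solution $\bm b_0$, write $\bm\vartheta=(0,\bm\theta^{(1)})$, and observe that for any other solution $\bm b$ the difference $\bm p:=\bm b-\bm b_0$ satisfies $|\bm p|<2Q$ and $\|\bm pA\|<2\delta$, since $\bm pA$ is the difference of the two near‑integer vectors $(\bm b+\bm\vartheta)A-\bm\theta^{(2)}$ and $(\bm b_0+\bm\vartheta)A-\bm\theta^{(2)}$. Distinct $\bm b$ give distinct $\bm p$, so
$$N_A(Q,\delta,\bm\theta)\le\#\mathcal D,\qquad\mathcal D:=\bigl\{\bm p\in\Z^{d+1}:\ |\bm p|<2Q,\ \|\bm pA\|<2\delta\bigr\};$$
here $\mathcal D$ is symmetric, contains $\bm0$, and the shift $\bm\theta$ has disappeared.

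The key point is that $\mathcal D$ is closed under differencing up to a factor of two: distinct $\bm p,\bm p'\in\mathcal D$ produce a nonzero integer vector $\bm p-\bm p'$ with $|\bm p-\bm p'|<4Q$ and $\|(\bm p-\bm p')A\|<4\delta$. Hence $\mathcal D$ is $\mu$‑separated, where
$$\mu:=\min\bigl\{|\bm q|:\ \bm q\in\Z^{d+1}\setminus\{\bm0\},\ |\bm q|<4Q,\ \|\bm qA\|<4\delta\bigr\}$$
(with $\mathcal D=\{\bm0\}$ when no such $\bm q$ exists), and a $\mu$‑separated subset of $(-2Q,2Q)^{d+1}$ has at most $O_d\bigl((Q/\mu+1)^{d+1}\bigr)$ elements. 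To bound $\mu$ from below I would use that $\sigma<\infty$ forces $\|\bm qA\|=\|A^{\mathrm T}\bm q^{\mathrm T}\|>0$ for every nonzero $\bm q$ (an exact relation $A^{\mathrm T}\bm q^{\mathrm T}\in\Z^m$ would make $\omega(A^{\mathrm T})=\infty$), so, with $\varepsilon'=\varepsilon/(2m)$, the definition of $\omega(A^{\mathrm T})=\sigma$ supplies $P_0=P_0(A,\varepsilon')$ and $c_A>0$ with $\|\bm qA\|\ge|\bm q|^{-\sigma-\varepsilon'}$ for $|\bm q|\ge P_0$ and $\|\bm qA\|\ge c_A$ for $0<|\bm q|<P_0$. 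Taking $\delta_0(A,\varepsilon):=\tfrac14\min(c_A,1)$, any $\bm q$ entering the definition of $\mu$ must have $|\bm q|\ge P_0$, so $4\delta>|\bm q|^{-\sigma-\varepsilon'}$ and therefore $\mu\ge(4\delta)^{-1/(\sigma+\varepsilon')}$.

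It then remains to combine these. If $\mu\ge4Q$ then $\#\mathcal D=1\le\pi^{2m}=C_{0,m}$. If $\mu<4Q$, the packing bound and the lower bound on $\mu$ give $\#\mathcal D\ll_dQ^{d+1}(4\delta)^{(d+1)/(\sigma+\varepsilon')}$, while the inequality $\mu<4Q$ simultaneously forces $\delta>\tfrac14(4Q)^{-(\sigma+\varepsilon')}$; substituting the latter into the former and using $m\varepsilon'=\varepsilon/2<\varepsilon$, a short power count — in which the exponents collapse via the single identity $\bigl(m-\tfrac{d+1}{\sigma+\varepsilon'}\bigr)(\sigma+\varepsilon')=m\sigma+m\varepsilon'-(d+1)$ — yields $\#\mathcal D\ll_{A,\varepsilon}\delta^mQ^{m\sigma+\varepsilon}$. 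Putting the trivial range and the degenerate case back in, this is exactly $N_A(Q,\delta,\bm\theta)\le\max\{\pi^{2m},\,C^*(A,\varepsilon)\delta^mQ^{m\sigma+\varepsilon}\}$.

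The argument is short once one commits to separation rather than volume, and the step that needs care is precisely this choice: a naive geometry‑of‑numbers estimate for the integer points of the slab $\{|\bm x|<2Q\}\times\{|A^{\mathrm T}\bm x^{\mathrm T}-\bm y|<2\delta\}\subseteq\R^{n+1}$ is too lossy, because one must not estimate all $n+1$ successive minima on the same footing — the $m$ ``vertical'' minima are pinned at $\asymp1/\delta$ and must supply the full factor $\delta^m$, whereas only the $d+1$ ``horizontal'' minima respond to the Diophantine bound. The separation‑and‑packing device sidesteps this bookkeeping. The only remaining subtlety is the exponent arithmetic in the last step: it leaves no slack beyond the $\varepsilon$ precisely because $\sigma\ge(d+1)/m$, so that the critical scale $\delta\asymp Q^{-\sigma}$ returns $\#\mathcal D\asymp1$ on both sides of the inequality.
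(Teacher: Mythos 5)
Your proof is correct, but it takes a genuinely different route from the paper. The paper detects the $m$ inequalities with a Fej\'er kernel and then applies Huxley's multidimensional large sieve (Lemma \ref{LargeSieve}) directly to the exponential sum $\sum_{\bm{j}}c(\bm{j})e(\bm{j}A^{\mathrm T}(q,\bm{a})^{\mathrm T})$, using that the points $A^{\mathrm T}(q,\bm{a})^{\mathrm T}$ with $|(q,\bm{a})|<Q$ are $c_0(2Q)^{-\sigma-\varepsilon/m}$-separated modulo $1$ (the inhomogeneous shift is harmless there because $|c(\bm{j})|=1$). You instead difference away the shift $\bm{\theta}$, reduce to the symmetric homogeneous set $\mathcal{D}$, and exploit the same Diophantine input on the \emph{physical} side: any nonzero $\bm{q}\in\Z^{d+1}$ with $\|\bm{q}A\|<4\delta$ must satisfy $|\bm{q}|\ge(4\delta)^{-1/(\sigma+\varepsilon')}$, so $\mathcal{D}$ is well separated and a packing count finishes the job. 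Your exponent arithmetic checks out: the constraint $\mu<4Q$ supplies $4\delta>(4Q)^{-(\sigma+\varepsilon')}$, the exponent $\tfrac{d+1}{\sigma+\varepsilon'}-m$ is non-positive because $m\sigma\ge d+1$, and substituting gives $\delta^mQ^{m\sigma+\varepsilon/2}$, comfortably within the claimed bound; the degenerate case $\mu\ge4Q$ gives $\#\mathcal{D}\le1\le\pi^{2m}$, so the stated $\max$ holds. What the paper's approach buys is a fully explicit constant ($C^*=\pi^{2m}2^{(\sigma+2)m+\varepsilon}c_0^{-m}$) and methodological uniformity with Theorem \ref{UpperBound-Phi}, where the (dual) large sieve is genuinely needed; what yours buys is complete elementarity — no Fourier analysis at all — and a transparent explanation, which you supply, of why a naive successive-minima/volume estimate for the slab would be too lossy.
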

 
We may compare the above two theorems and discuss their strengths and weaknesses. Theorem \ref{UpperBound} is good at capturing the heuristic main term $\delta^mQ^{d+1}$ when $\delta$ is larger than $Q^{-\frac1\omega}$, but it delivers inferior bounds when $\delta$ is much smaller. On the other hand, Theorem \ref{DualUpperBound} initially gives bad bounds for large $\delta$, but for smaller $\delta$ the bound becomes significantly better. Especially when $\delta$ is below $Q^{-{\sigma}}$ it infers that the counting function is bounded. This is an incredibly good bound since there may be finitely many rational points lying on $\mL$ and no matter how small $\delta$ is these points are always counted.

 The main innovation of this paper is that we manage to incorporate the multidimensional large sieve inequality and its dual form into Fourier analytic methods, which turns out to be  surprisingly effective in delivering  sharp upper bounds for our counting functions. The large sieve together with its relatives is a very powerful tool in analytic number theorists' repertoire. We refer the readers to Montgomery's excellent survey article  \cite{Mon78} which summarizes the development of the large sieve from its birth in the 1940s up to the end of 1970s. 
 
 Lastly, we establish a matching lower bound. Actually for the purpose of proving the divergence part of the Khintchine-Jarn\'{i}k type theorem (Theorem \ref{JTD}), we need the ubiquity version of the lower bound estimate.

For  $0\le \kappa<1$, let
\begin{equation}\label{RKappa}
\mathcal{R}_A^\kappa(Q,\delta):=\left\{(q,\bm{a})\in \N\times \Z^d\biggm|
\begin{array}{c}
\bm{a}/q\in\mathcal{U}, \,\kappa Q<q\le Q,\\ \|(q,\bm{a})A\|<\delta
\end{array}\right\}.
\end{equation}
We also use  $\mu_d$ to denote the Lebesgue measure on $\R^d$.

 \begin{thm}\label{UbiquityLowerBound}
 Let $A$ be a $(d+1)\times m$ real matrix with diophantine exponent $\omega(A)=\omega$,  $\tau_0$ be defined in \eqref{tau0}, and $\varepsilon$ be a  positive number less than $\tau_0$. Then for any ball $\mathcal{B}\subseteq\mathcal{U}$ there exists $\kappa>0$ depending on $\mathcal{B}$ such that for
 $\delta$  satisfying
 $1\ge\delta\ge Q^{-\tau_0+\varepsilon}$
 and positive integer $Q\ge Q_0(A,\varepsilon, \mathcal{B})$, we have
 $$
 \mu_d\left(\mathcal{B}\cap \bigcup_{(q,\bm{a})\in\mathcal{R}_A^\kappa(Q,\delta)}B\left(\frac{\bm{a}}q,\frac{\kappa^{-1}}{Q^{\frac{d+1}d}\delta^{\frac{m}d}}\right)\right)\ge\frac12\mu_d(\mathcal{B}).
 $$

 \end{thm}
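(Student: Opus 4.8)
The plan is to establish the ubiquity estimate by a standard ``overlapping balls'' argument, playing an upper bound for the counting function (Theorem~\ref{UpperBound}) against a lower bound for the total number of rational points near $\mathcal{L}$, the latter coming from a counting/covering argument on $\mathcal{U}$. First I would set up the relevant denominators: by Theorem~\ref{UpperBound}(a) with $\delta \asymp Q^{-\tau_0+\varepsilon}$ (note $\tau_0 \le 1/\omega$, so the hypothesis $\delta \ge Q^{-1/\omega+\varepsilon'}$ is met for a suitable $\varepsilon'$), we have $N_A(Q,\delta,\bm 0) \ll \delta^m Q^{d+1}$ once $Q$ is large. This controls the cardinality of $\mathcal{R}_A^\kappa(Q,\delta)$ from above (after restricting $q$ to the dyadic-type block $\kappa Q < q \le Q$, which costs only a constant). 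Next I would produce a \emph{lower} bound for $\#\mathcal{R}_A^\kappa(Q,\delta) \cap (\text{denominators near }\mathcal{B})$ of the matching order $\asymp \delta^m Q^{d+1}$; this is exactly where the Dirichlet-type exponent $\tau_0 = 1/\max\{m,\omega\}$ enters. For $q$ in a positive-proportion subset of $(\kappa Q, Q]$, a pigeonhole/geometry-of-numbers argument on the fibre $\{\widetilde{\bm x}A : \bm x \in \mathcal{B}\}$ shows that the number of $\bm a \in \Z^d$ with $\bm a/q \in \mathcal{B}$ and $\|(q,\bm a)A\| < \delta$ is $\gg \delta^m q^d$, provided $\delta$ is not smaller than the Dirichlet threshold $q^{-\tau_0}$ — which is precisely our range. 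Summing over such $q$ gives $\#\mathcal{R}_A^\kappa(Q,\delta) \gg \delta^m Q^{d+1}$, with the implied constant (and $\kappa$) depending on $\mathcal B$.

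With the two-sided count in hand, the measure estimate follows by a Cauchy--Schwarz / variance argument. Write $E$ for the union of balls $B(\bm a/q, r)$ with $r := \kappa^{-1}Q^{-(d+1)/d}\delta^{-m/d}$ over $(q,\bm a) \in \mathcal{R}_A^\kappa(Q,\delta)$. The radius $r$ is chosen so that the \emph{total} volume $\sum_{(q,\bm a)} \mu_d(B(\bm a/q,r)) \asymp \#\mathcal{R}_A^\kappa(Q,\delta) \cdot r^d \gg \delta^m Q^{d+1} \cdot Q^{-(d+1)}\delta^{-m} = 1$, i.e. of constant order relative to $\mu_d(\mathcal{U})$; this is what makes the target $\tfrac12 \mu_d(\mathcal B)$ attainable. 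To convert total volume into covered measure, I would bound the overlap: two centres $\bm a/q,\ \bm a'/q'$ with $q,q' \le Q$ are either equal or satisfy $|\bm a/q - \bm a'/q'| \ge 1/(qq') \ge Q^{-2}$, so a given point of $\mathcal{U}$ lies in at most $O\big((r Q^2)^d\big)$ of the balls; more efficiently, one localises to denominators in a short range and uses that distinct reduced fractions of denominator $\le Q$ are $\ge Q^{-2}$-separated, giving a bounded multiplicity on each dyadic denominator block and hence an overlap factor absorbed by choosing $\kappa$ small. Then $\mu_d(E \cap \mathcal B) \ge c\, \mu_d(\mathcal B)$ for an absolute $c>0$, and a final adjustment of $\kappa$ (shrinking the balls changes $r$ but one compensates by the freedom in the constant, or one inflates the count by a large-but-fixed factor) upgrades $c$ to $\tfrac12$.

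The main obstacle I anticipate is the \emph{lower} count $\#\mathcal{R}_A^\kappa(Q,\delta) \gg \delta^m Q^{d+1}$ with the correct dependence on $\mathcal B$ and on the range of $\delta$. The subtlety is that we need rational points $\bm a/q$ to actually land inside the prescribed ball $\mathcal B \subseteq \mathcal U$ (not merely somewhere in $\mathcal U$) while simultaneously satisfying $\|(q,\bm a)A\| < \delta$; a crude Dirichlet box argument gives one such point per denominator but not the full $\asymp \delta^m q^d$ of them, and for $\delta$ close to the threshold $Q^{-\tau_0+\varepsilon}$ one must be careful that the boxes used in the pigeonhole are genuinely non-degenerate. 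I expect this is handled by applying Minkowski's linear forms theorem (or a counting-measure version of it) to the system of $d$ linear forms $\bm x \mapsto \bm x \pmod{1}$ together with the $m$ forms $\widetilde{\bm x}A \pmod 1$ on a box of sidelengths $\asymp Q$ in $q$ and $\asymp \delta$ in the $A$-directions, for each fixed $q$ in a positive proportion of $(\kappa Q, Q]$; the condition $\delta \ge Q^{-\tau_0+\varepsilon}$ guarantees the box has volume $\gg Q^\varepsilon$, so it contains $\gg \delta^m q^d$ lattice points, and a positive fraction of these have $\bm a/q \in \mathcal B$ by equidistribution of $\{\bm x : \widetilde{\bm x} A \in \text{(small box)}\}$ along fibres. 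Getting the constant $\kappa$ and the threshold $Q_0$ to depend only on $A$, $\varepsilon$, and $\mathcal B$ — and not implicitly on $\delta$ or $Q$ — is the bookkeeping one has to do carefully.
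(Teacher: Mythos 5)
Your overall strategy inverts the logical structure that actually makes this theorem provable, and the two steps you flag as ``the main obstacle'' and ``bookkeeping'' are each genuine gaps. First, the lower bound $\#\mathcal{R}_A^\kappa(Q,\delta)\gg\delta^mQ^{d+1}$ is not something one can obtain for free from Minkowski's linear forms theorem or from ``equidistribution along fibres'': Minkowski produces a single nonzero lattice point, not $\gg\delta^mq^d$ of them, and for a \emph{fixed} denominator $q$ the claim that $\#\{\bm{a}:\bm{a}/q\in\mathcal{B},\ \|(q,\bm{a})A\|<\delta\}\gg\delta^mq^d$ is simply false in general (e.g.\ for $A'$ rational with large denominator and $\delta$ small). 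Even averaged over $q$, this lower count is precisely the quantitative equidistribution statement that the diophantine hypothesis on $A$ does \emph{not} directly give you; in the paper it is obtained as a \emph{corollary} of Theorem \ref{UbiquityLowerBound} (see Corollary \ref{AsympMainTerm} and the sentence preceding it), so using it as an input is circular. Second, your overlap control is quantitatively insufficient: with $r=\kappa^{-1}Q^{-(d+1)/d}\delta^{-m/d}$ and the $Q^{-2}$-separation of reduced fractions, the multiplicity at a point is of order $(rQ^2)^d\asymp\kappa^{-d}Q^{d-1}\delta^{-m}$, which is unbounded (already $\asymp\delta^{-m}$ for $d=1$ even after localising to a dyadic denominator block). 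Converting total volume into covered measure by Cauchy--Schwarz would require a pair-correlation (second moment) bound for rational points near $\mathcal{L}$, which you have not supplied and which is itself a hard estimate.

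The paper avoids both issues entirely. Lemma \ref{Minkowski} applies Minkowski's linear forms theorem to the $(n+1)$-dimensional system in $(q,\bm{a},\bm{b})$ to show that the balls $B(\bm{a}/q,(qQ^{1/d}\delta^{m/d})^{-1})$ over \emph{all} $(q,\bm{a})\in\mathcal{R}^0_A(Q,\delta)$ cover $\mathcal{U}$ completely (this uses $\delta\ge Q^{-1/m}\ge Q^{-\tau_0}$ to force $q>0$). The theorem then follows by a complementary argument: the measure of the part of $\mathcal{B}$ covered by balls with small denominators $q\le\kappa Q$ is bounded, via dyadic decomposition and the upper bound of Theorem \ref{UpperBound}(a), by $4^{d+1}C_{d+1,m}\kappa+o(1)$, which is $\le\frac12\mu_d(\mathcal{B})$ for $\kappa$ small and $Q$ large. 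Thus only the \emph{upper} counting bound is needed, and no overlap estimate at all. If you want to rescue your route you would essentially have to prove the paper's lemma anyway; I recommend restructuring around the covering-plus-exclusion argument.
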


  After upper bounding the measure  on the left hand side of the above inequality in a trivial manner,  we obtain a lower bound for $\#\mathcal{R}_A^\kappa(Q,\delta)$. Then a combination of this with Theorem \ref{UpperBound}  yields  matching upper and lower bounds for the homogeneous counting function $N_A(Q,\delta):=N_A(Q,\delta,\bm{0})$ when $\delta$ is not too small.

 \begin{cor}\label{AsympMainTerm}  Let $A$ be a $(d+1)\times m$ real matrix with diophantine exponent $\omega(A)=\omega$. Then for all
 $\delta$  satisfying
 $1\ge\delta\ge Q^{-\tau_0+\varepsilon}$ for some $\varepsilon>0$, where $\tau_0$ is defined in \eqref{tau0},
 and for all sufficiently large $Q$, we have
 $$
 N_A(Q,\delta)\asymp_{A} \delta^mQ^{d+1}.
 $$
 \end{cor}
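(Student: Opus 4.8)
The plan is to obtain Corollary~\ref{AsympMainTerm} by squeezing $N_A(Q,\delta)=N_A(Q,\delta,\bm 0)$ between the upper bound of Theorem~\ref{UpperBound}(a) and a counting lower bound extracted from the ubiquity estimate of Theorem~\ref{UbiquityLowerBound}. The first thing I would check is that the two ranges of $\delta$ are compatible: since $\tau_0=\frac1{\max\{m,\omega\}}\le\frac1\omega$, the hypothesis $1\ge\delta\ge Q^{-\tau_0+\varepsilon}$ already forces $\delta\ge Q^{-1/\omega+\varepsilon}$, which is precisely the range demanded by Theorem~\ref{UpperBound}(a). (If $\omega=\infty$ then $\tau_0=0$ and the range $1\ge\delta\ge Q^{\varepsilon}$ is empty for $Q$ large, so that degenerate case is vacuous and we may assume $\omega<\infty$.) Fixing once and for all an $\varepsilon\in(0,\tau_0)$ as in the statement, Theorem~\ref{UpperBound}(a) with $\bm\theta=\bm 0$ then gives, for every integer $Q\ge Q_0(A,\varepsilon)$ and every $\delta$ in the stated range,
\[
N_A(Q,\delta)\le C_{d+1,m}\,\delta^m Q^{d+1}\ll_{d,m}\delta^m Q^{d+1},
\]
which is the required upper bound.

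For the matching lower bound I would fix, once and for all, a single ball $\mathcal B\subseteq\mathcal U$ independent of $A$ (for instance the sup-norm ball of radius $\tfrac14$ centred at $(\tfrac12,\ldots,\tfrac12)$), so that the constant $\kappa=\kappa(\mathcal B)>0$ supplied by Theorem~\ref{UbiquityLowerBound} depends only on $d$. Applying that theorem, for $Q\ge Q_0(A,\varepsilon,\mathcal B)$ and $\delta$ in the stated range,
\[
\mu_d\!\left(\mathcal B\cap\bigcup_{(q,\bm a)\in\mathcal R_A^\kappa(Q,\delta)}B\!\left(\frac{\bm a}q,\frac{\kappa^{-1}}{Q^{(d+1)/d}\delta^{m/d}}\right)\right)\ge\tfrac12\mu_d(\mathcal B).
\]
Bounding the left-hand side from above by subadditivity, together with the fact that a sup-norm ball of radius $r$ in $\R^d$ has $\mu_d$-measure $(2r)^d$, one gets
\[
\tfrac12\mu_d(\mathcal B)\le\#\mathcal R_A^\kappa(Q,\delta)\cdot\left(\frac{2\kappa^{-1}}{Q^{(d+1)/d}\delta^{m/d}}\right)^{d}=\#\mathcal R_A^\kappa(Q,\delta)\cdot 2^d\kappa^{-d}\,\delta^{-m}Q^{-(d+1)},
\]
hence $\#\mathcal R_A^\kappa(Q,\delta)\gg_d\delta^m Q^{d+1}$. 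Finally, every pair $(q,\bm a)\in\mathcal R_A^\kappa(Q,\delta)$ has $\bm a/q\in\mathcal U$, whence $0\le a_i\le q\le Q$, and $\|(q,\bm a)A\|<\delta$, so it is one of the lattice points counted by $N_A(Q,\delta)$; this yields $N_A(Q,\delta)\ge\#\mathcal R_A^\kappa(Q,\delta)\gg_d\delta^mQ^{d+1}$. The only slight care needed is the distinction between $|(q,\bm a)|\le Q$ and the strict inequality $|(q,\bm a)|<Q$ in the definition of $N_A$: this is handled by running the ubiquity step instead at the comparable integer level $Q_1:=\lfloor Q/2\rfloor$ and with $\varepsilon$ shrunk to $\varepsilon/2$, which is legitimate because $Q_1\asymp Q$ and $-\tau_0+\varepsilon<0$ together give $Q_1^{-\tau_0+\varepsilon/2}\le Q^{-\tau_0+\varepsilon}\le\delta$ for all large $Q$; then every point of $\mathcal R_A^\kappa(Q_1,\delta)$ has sup-norm $\le Q_1<Q$. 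Combining the two bounds gives $N_A(Q,\delta)\asymp_A\delta^m Q^{d+1}$.

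I do not expect a genuine obstacle here, since both ingredients are already in hand: the upper bound is literally Theorem~\ref{UpperBound}(a), and the lower bound is the routine mass-counting consequence of the ubiquity statement Theorem~\ref{UbiquityLowerBound}. What remains is pure bookkeeping — reconciling the two $\delta$-ranges via $\tau_0\le1/\omega$, converting the measure inequality into a cardinality inequality, and absorbing the $\le Q$ versus $<Q$ normalisation through the harmless level change $Q\mapsto\lfloor Q/2\rfloor$ — and none of it is deep; the single point worth double-checking is that, after this level change and the accompanying shrinking of $\varepsilon$, the hypothesis $\delta\ge Q_1^{-\tau_0+\varepsilon/2}$ of Theorem~\ref{UbiquityLowerBound} still follows from $\delta\ge Q^{-\tau_0+\varepsilon}$ for all sufficiently large $Q$, which is a one-line estimate.
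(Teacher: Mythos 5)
Your proposal is correct and follows exactly the route the paper itself indicates: the upper bound is Theorem~\ref{UpperBound}(a) (valid since $\tau_0\le 1/\omega$ makes the $\delta$-ranges compatible), and the lower bound comes from trivially bounding the measure in Theorem~\ref{UbiquityLowerBound} by $\#\mathcal{R}_A^\kappa(Q,\delta)$ times the volume of a single ball. Your extra care with the $|(q,\bm a)|<Q$ versus $q\le Q$ normalisation via the level change $Q\mapsto\lfloor Q/2\rfloor$ is a harmless refinement of a detail the paper glosses over.
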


Throughout the paper, we will use Vinogradov's notation $f(x)\ll g(x)$ and Landau's notation $f(x)=O(g(x))$ to mean there exists a positive constant $C$ such that $|f(x)|\le Cg(x)$ as $x\to\infty$. Moreover, $f\asymp g$ means $f\ll g$ and $f\gg g$. 

 Theorems  \ref{UpperBound-Phi}, \ref{UpperBound}, and \ref{DualUpperBound} (upper bound estimates) are proved in \S\ref{ProofUpperBounds}, while the proof of Theorem \ref{UbiquityLowerBound} (ubiquitous lower bound) is presented in \S\ref{ProofLowerBound}.

\subsection{The upper bound estimates}\label{ProofUpperBounds}

\subsubsection{Some preliminary lemmata}
To prepare us for the proofs of the upper bound estimates in \S\ref{CountingResults}, we state a few lemmata first.

Denote $e^{2\pi i x}$ by $e(x)$.
The following is a multidimensional large sieve inequality, due to Huxley \cite{Hux68}. 
\begin{lem}[Huxley, 1968]\label{LargeSieve}
For $\bm{y}\in\mathbb{R}^k$, let 
$$S(\bm{y}) = \sum_{\bm{l}} c(\bm{l})e(\bm{l}\cdot \bm{y}),$$
where the summation is over integer points $\bm{l}=(l_1,\ldots, l_k)$ in the $k$ dimensional rectangle
$$N_i < l_i \le N_i+L_i \quad(i = 1,\dots, k),$$
and $c(\bm{l})$ are some complex coefficients depending on $\bm{l}$.
Let $\bm{y}^{(r)}=(y^{(r)}_1,\ldots,y^{(r)}_k)\in\mathbb{R}^k$ for $1\le r\le R$ satisfy that
$$
\max_{i}\lambda_i^{-1}\|y^{(r)}_i-y^{(s)}_i\|\ge1
$$
whenever $r\not=s$, where $\lambda_1, \ldots, \lambda_k$ are positive numbers not exceeding $\frac12$. Then
$$
\sum_{r=1}^R|S(\bm{y}^{(r)})|^2\le\left(\prod_{i=1}^k \left(L_i^{\frac12}+\lambda_i^{-\frac12}\right)^2\right)\sum_{\bm{l}} |c(\bm{l})|^2.
$$
\end{lem}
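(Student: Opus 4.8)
The plan is to prove this by induction on the number of variables $k$, peeling off one coordinate at a time; this is the classical route to the multidimensional large sieve, and the only point genuinely requiring thought is that the spacing hypothesis is a ``$\max_i$'' condition rather than a separate spacing condition in each variable, which is dealt with by a pigeonhole step.

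For the base case $k=1$, first multiply $S$ by the unimodular factor $e\bigl(-(N_1+L_1/2)y\bigr)$; this changes none of the quantities $|S(y^{(r)})|$ nor $\sum_l|c(l)|^2$, and allows us to assume the frequencies satisfy $|l|\le L_1/2$, so that $\|S'\|_{L^2(\mathbb{T})}\le\pi L_1\|S\|_{L^2(\mathbb{T})}$ (by Parseval, since $|2\pi l|\le\pi L_1$) and $\sum_l|c(l)|^2=\|S\|_{L^2(\mathbb{T})}^2$. One then invokes the one-dimensional large sieve $\sum_r|S(y^{(r)})|^2\le\bigl(L_1^{1/2}+\lambda_1^{-1/2}\bigr)^2\sum_l|c(l)|^2$, which is valid because $(L_1^{1/2}+\lambda_1^{-1/2})^2\ge L_1+\lambda_1^{-1}$; this is standard, following for instance from Gallagher's inequality $|f(y_0)|^2\le|I|^{-1}\int_I|f|^2+2\int_I|f|\,|f'|$ (valid for any interval $I\ni y_0$) applied over the pairwise disjoint intervals of length $\lambda_1$ centred at the $y^{(r)}$, or from a Beurling--Selberg majorant combined with Gershgorin's bound for the associated Hermitian quadratic form.

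For the inductive step, cover the first-coordinate torus $\mathbb{T}$ by intervals $I_1,I_2,\dots$ of length $\lambda_1$ with bounded overlap and put $G_j:=\{r:y_1^{(r)}\in I_j\}$. If $r\ne s$ lie in the same $G_j$, then $\|y_1^{(r)}-y_1^{(s)}\|<\lambda_1$, so the first coordinate cannot be the one witnessing the hypothesis for the pair $(r,s)$; hence the projected points $\bm{y}^{(r)}_{\ge2}:=(y_2^{(r)},\dots,y_k^{(r)})$, $r\in G_j$, satisfy the $(k-1)$-dimensional hypothesis with $\lambda_2,\dots,\lambda_k$. Applying Gallagher's inequality in the first variable over $I_j$ to $S(\,\cdot\,,\bm{y}^{(r)}_{\ge2})$ (legitimate since $y_1^{(r)}\in I_j$) and summing over $r\in G_j$, one bounds, for each fixed first coordinate $t$, the inner sums $\sum_{r\in G_j}|S(t,\bm{y}^{(r)}_{\ge2})|^2$ and $\sum_{r\in G_j}|\partial_1S(t,\bm{y}^{(r)}_{\ge2})|^2$ by the inductive hypothesis --- note that $S(t,\cdot)$ and $\partial_1S(t,\cdot)$ are $(k-1)$-variable trigonometric polynomials with frequencies in the box $\prod_{i\ge2}(N_i,N_i+L_i]$. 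Summing over $j$ via $\sum_j\int_{I_j}(\cdot)\ll\int_{\mathbb{T}}(\cdot)$, applying Cauchy--Schwarz to the resulting cross term, and using $\|\partial_1S\|_{L^2(\mathbb{T}^k)}\le\pi L_1\|S\|_{L^2(\mathbb{T}^k)}$ together with Parseval on $\mathbb{T}^k$, one reaches a bound of the correct product shape $\prod_{i=1}^k(\text{per-coordinate factor}_i)\cdot\sum_{\bm{l}}|c(\bm{l})|^2$.

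The step I expect to be the main obstacle is closing the induction with the \emph{precise} constant. Gallagher's elementary averaging produces the per-coordinate factor $\lambda_1^{-1}+2\pi L_1$ in the peeled variable, which is only comparable to --- not equal to --- the asserted $(L_1^{1/2}+\lambda_1^{-1/2})^2$, so this argument as stated yields only $(2\pi)^k\prod_{i=1}^k(L_i^{1/2}+\lambda_i^{-1/2})^2$. To recover the sharp constant one must feed in the one-dimensional large sieve in a way compatible with the pigeonholing rather than the crude averaging (or run a genuinely $k$-dimensional majorant argument in one shot), and keep track of the harmless, lower-order effect of the $L_i$ not being integers, so that the frequency box contributes at most $\lceil L_i\rceil$ lattice points per coordinate; this constant-chasing is precisely the content of Huxley's argument. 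For every application made of the lemma in the present paper, the weaker but structurally identical bound just mentioned would already suffice.
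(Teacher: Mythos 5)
This lemma is not proved in the paper at all: it is quoted verbatim from Huxley's 1968 paper, so there is no in-text argument to compare yours against line by line. Your overall strategy --- induction on $k$, pigeonholing the first coordinates into intervals of length $\lambda_1$ so that within each class the spacing hypothesis is inherited by the remaining $k-1$ coordinates, and then combining a one-variable Gallagher/large-sieve step with the inductive hypothesis and Parseval in the peeled variable --- is essentially the classical route to the multidimensional large sieve and is close in spirit to Huxley's own argument. The pigeonhole observation (if $\|y_1^{(r)}-y_1^{(s)}\|<\lambda_1$ then the max in the hypothesis must be attained at some $i\ge 2$) is exactly the right way to handle the ``$\max_i$'' separation condition, and the bookkeeping via $\sum_j\int_{I_j}\le\int_{\mathbb{T}}$ and $\|\partial_1 S\|_{L^2}\le\pi L_1\|S\|_{L^2}$ after recentering is sound.

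The genuine shortfall is the one you flag yourself: the lemma asserts the precise constant $\prod_{i=1}^k\bigl(L_i^{1/2}+\lambda_i^{-1/2}\bigr)^2$, and raw Gallagher averaging in the peeled variable yields only $\lambda_1^{-1}+2\pi L_1$ per coordinate (plus whatever is lost in the covering), i.e.\ a bound weaker by a factor on the order of $(2\pi)^k$. As written, therefore, your argument proves a correct but strictly weaker inequality, not the stated one; closing the constant requires feeding the sharp one-dimensional large sieve into the induction rather than the crude $L^1$--$L^2$ averaging, which is precisely the content of Huxley's proof and is not supplied here. For this paper the loss is almost harmless --- every qualitative theorem survives --- but it is not entirely cosmetic either, since the explicit constants $C_{d,m}=8^d\pi^{2m}$ appearing in Theorems \ref{UpperBound-Phi}, \ref{UpperBound}, and \ref{DualUpperBound} are propagated directly from the constant in this lemma and would have to be inflated accordingly. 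So: right approach, correct key idea for the max-condition, but an honest gap between what is proved and what is stated.
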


Applying the duality principle of matrix norms (see for example \cite[Lemma 2]{Mon78}) to Lemma \ref{LargeSieve},  we obtain its dual form.
\begin{lem}[large sieve, dual form]\label{DualLargeSieve}
Let $\bm{y}^{(r)}=(y^{(r)}_1,\ldots,y^{(r)}_k)\in\mathbb{R}^k$ for $1\le r\le R$ satisfy that
$$
\max_{i}\lambda_i^{-1}\|y^{(r)}_i-y^{(s)}_i\|\ge1
$$
whenever $r\not=s$, where $\lambda_1, \ldots, \lambda_k$ are positive numbers not exceeding $\frac12$, and let 
$$T(\bm{l}) = \sum_{r=1}^R d(\bm{y}^{(r)})e(\bm{l}\cdot \bm{y}^{(r)})$$
for some complex coefficients $d(\bm{y}^{(r)})$ depending on $\bm{y}^{(r)}$.
Then
$$
\sum_{\bm{l}}|T(\bm{l})|^2\le \left(\prod_{i=1}^k \left(L_i^{\frac12}+\lambda_i^{-\frac12}\right)^2\right)\sum_{r=1}^R |d(\bm{y}^{(r)})|^2,
$$
where  the summation on the left side is over integer points $\bm{l}=(l_1,\ldots, l_k)$ in the $k$ dimensional rectangle
$$N_i < l_i \le N_i + L_i \quad(i = 1,\dots, k).$$
\end{lem}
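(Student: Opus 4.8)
The plan is to deduce Lemma~\ref{DualLargeSieve} from Lemma~\ref{LargeSieve} by the self-duality of the $\ell^2$ operator norm, that is, the ``duality principle of matrix norms'' recorded as \cite[Lemma~2]{Mon78}. First I would recast Lemma~\ref{LargeSieve} as an operator-norm bound. Fix spacing parameters $\lambda_1,\dots,\lambda_k$ and points $\bm{y}^{(1)},\dots,\bm{y}^{(R)}$ obeying the hypothesis, write $\mathcal{I}$ for the finite set of integer points $\bm{l}$ in the rectangle $N_i<l_i\le N_i+L_i$, put
$$
\Delta:=\prod_{i=1}^k\bigl(L_i^{1/2}+\lambda_i^{-1/2}\bigr)^2,
$$
and let $M$ be the $R\times|\mathcal{I}|$ matrix with entries $M_{r,\bm{l}}=e(\bm{l}\cdot\bm{y}^{(r)})$. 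Since $S(\bm{y}^{(r)})=\sum_{\bm{l}\in\mathcal{I}}c(\bm{l})\,M_{r,\bm{l}}$ is the $r$-th coordinate of $M\bm{c}$, the inequality of Lemma~\ref{LargeSieve}, valid for every coefficient vector $\bm{c}=(c(\bm{l}))_{\bm{l}\in\mathcal{I}}$, says precisely that $M$ has $\ell^2 \to \ell^2$ operator norm at most $\Delta^{1/2}$.

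Next I would apply the duality principle. For any finite matrix the operator norm is unchanged under transposition: $\|M\|_{\mathrm{op}}=\|M^{\ast}\|_{\mathrm{op}}$ since the adjoint has the same norm, and $\|M^{\ast}\|_{\mathrm{op}}=\|M^{\mathrm T}\|_{\mathrm{op}}$ since entrywise complex conjugation is an isometry of $\ell^2$. Hence $M^{\mathrm T}$ also has operator norm at most $\Delta^{1/2}$; feeding it an arbitrary vector $\bm{d}=(d(\bm{y}^{(r)}))_{r=1}^R$, and noting that the $\bm{l}$-th coordinate of $M^{\mathrm T}\bm{d}$ equals $\sum_{r=1}^R d(\bm{y}^{(r)})\,M_{r,\bm{l}}=\sum_{r=1}^R d(\bm{y}^{(r)})\,e(\bm{l}\cdot\bm{y}^{(r)})=T(\bm{l})$, one obtains
$$
\sum_{\bm{l}\in\mathcal{I}}|T(\bm{l})|^2\le\Delta\sum_{r=1}^R|d(\bm{y}^{(r)})|^2,
$$
which is exactly Lemma~\ref{DualLargeSieve}, with the identical constant $\Delta$.

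If one prefers not to quote the duality principle as a black box, the same conclusion falls out of a short bilinear-form computation. Set $B(\bm{c},\bm{d}):=\sum_{r=1}^R d(\bm{y}^{(r)})\,S(\bm{y}^{(r)})$, and observe that rearranging the double sum gives $B(\bm{c},\bm{d})=\sum_{\bm{l}\in\mathcal{I}}c(\bm{l})\,T(\bm{l})$. On one hand, Cauchy--Schwarz followed by Lemma~\ref{LargeSieve} gives $|B(\bm{c},\bm{d})|\le\bigl(\sum_r|d(\bm{y}^{(r)})|^2\bigr)^{1/2}\bigl(\sum_r|S(\bm{y}^{(r)})|^2\bigr)^{1/2}\le\Delta^{1/2}\bigl(\sum_{\bm{l}}|c(\bm{l})|^2\bigr)^{1/2}\bigl(\sum_r|d(\bm{y}^{(r)})|^2\bigr)^{1/2}$; on the other hand, choosing $c(\bm{l})$ proportional to $\overline{T(\bm{l})}$ shows that the supremum of $|B(\bm{c},\bm{d})|$ over all $\bm{c}$ with $\sum_{\bm{l}}|c(\bm{l})|^2=1$ equals $\bigl(\sum_{\bm{l}}|T(\bm{l})|^2\bigr)^{1/2}$, and comparing the two estimates delivers the claim. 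Either way there is essentially no difficulty of substance; the only point that demands a little care is bookkeeping — checking that the transpose of $M$ is precisely the linear map occurring in the definition of $T(\bm{l})$, so the constant transfers verbatim, and that the spacing condition imposed on $\{\bm{y}^{(r)}\}$ is exactly the one in Lemma~\ref{LargeSieve}, so no hypothesis is lost in passing to the dual form.
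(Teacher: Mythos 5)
Your argument is correct and is exactly the route the paper takes: the paper derives Lemma \ref{DualLargeSieve} from Lemma \ref{LargeSieve} by a one-line appeal to the duality principle of matrix norms (\cite[Lemma 2]{Mon78}), which is precisely the operator-norm/bilinear-form computation you have written out in detail. No issues.
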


\begin{lem}\label{PsiBadAppr}
Suppose that a matrix $A$ is not $\phi$-approximable, then it is $c_0\phi$-badly approximable for some positive constant $c_0\le1$ depending on $A$ and $\phi$. 
\end{lem}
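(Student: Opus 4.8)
The plan is a direct argument from the definitions, with essentially no hard content. First I would unwind the hypothesis: saying that $A$ is $\phi$-approximable means that $\|A\bm{q}^\mathrm{T}\|<\phi(|\bm{q}|)$ has infinitely many solutions $\bm{q}\in\Z^m$, so the assumption that $A$ is \emph{not} $\phi$-approximable says precisely that the exceptional set
$$
E:=\bigl\{\bm{q}\in\Z^m\setminus\{\bm{0}\}:\|A\bm{q}^\mathrm{T}\|<\phi(|\bm{q}|)\bigr\}
$$
is finite. If $E$ is empty, then $A$ is already $\phi$-badly approximable and one may take $c_0=1$; so the only case to treat is $E\neq\varnothing$.

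The one point that requires an actual observation, rather than mere bookkeeping, is that $\|A\bm{q}^\mathrm{T}\|>0$ for every nonzero $\bm{q}\in\Z^m$. I would prove this by contradiction: if $\|A\bm{q}_0^\mathrm{T}\|=0$ for some $\bm{q}_0\neq\bm{0}$, then $\|A(k\bm{q}_0)^\mathrm{T}\|=0<\phi(|k\bm{q}_0|)$ for every $k\in\N$ (here one uses that $\phi$ takes only positive values), producing infinitely many solutions and contradicting the hypothesis. Hence the ratio $\|A\bm{q}^\mathrm{T}\|/\phi(|\bm{q}|)$ is a well-defined positive real number for each of the finitely many $\bm{q}\in E$.

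With this in hand I would simply set
$$
c_0:=\min\Bigl(\{1\}\cup\Bigl\{\|A\bm{q}^\mathrm{T}\|/\phi(|\bm{q}|):\bm{q}\in E\Bigr\}\Bigr),
$$
a minimum over a finite set of strictly positive reals, so that $0<c_0\le1$. To conclude, I would verify $c_0\phi$-bad approximability case by case: for $\bm{q}\notin E$ one has $\|A\bm{q}^\mathrm{T}\|\ge\phi(|\bm{q}|)\ge c_0\phi(|\bm{q}|)$ since $c_0\le1$, and for $\bm{q}\in E$ one has $\|A\bm{q}^\mathrm{T}\|\ge c_0\phi(|\bm{q}|)$ by the very choice of $c_0$; moreover $c_0\phi$ is again a decreasing positive function on $\R_{>0}$, hence an admissible choice in the definition of bad approximability. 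There is no genuine obstacle here: the only step that is not purely formal is ruling out $\|A\bm{q}^\mathrm{T}\|=0$, which if allowed would force $c_0=0$ and break the conclusion.
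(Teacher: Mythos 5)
Your proposal is correct and follows essentially the same route as the paper: finiteness of the exceptional set, ruling out $\|A\bm{q}^\mathrm{T}\|=0$ by considering the multiples $k\bm{q}$, and then shrinking the constant to absorb the finitely many exceptions. The only difference is that you make the choice of $c_0$ explicit as a minimum over the exceptional set, which the paper leaves implicit.
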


\begin{proof}
By definition, we know
$\|A\bm{q}^\mathrm{T}\|<\phi(|\bm{q}|)$ only has finitely many solutions in integer vectors $\bm{q}$. Clearly for each nonzero integer vector $\bm{q}$, we have $A\bm{q}\not=\bm{0}$. Otherwise $A(k\bm{q})^\mathrm{T}=\bm{0}$ for all $k\in\Z$, which contradicts with the assumption that $A$ is not $\phi$-approximable. Therefore, we may choose a small enough positive constant $c_0\le 1$ to cope with these finitely many exceptions, and guarantee that $\|A\bm{q}^\mathrm{T}\|\ge c_0\phi(|\bm{q}|)$ holds for all nonzero integer vectors $\bm{q}$.
\end{proof}

\begin{lem}\label{Fejer}
For a real number $\delta$ with $0<\delta\le\frac12$, 
let $\rchi_{\delta}(\Theta)$ be the characteristic function of the set $\{\Theta\in\R:\|\Theta\|\le\delta\}$
and consider the Fej\'er kernel
$$
\mathcal{F}_J(\Theta)=J^{-2}\left|\sum_{j=1}^Je(j\Theta)\right|^2,\quad\text{where }J=\left\lfloor\frac1{2\delta}\right\rfloor.
$$
Then for all $\Theta\in\R$
$$
\rchi_\delta(\Theta)\le \frac{\pi^2}4\mathcal{F}_J(\Theta).$$
\end{lem}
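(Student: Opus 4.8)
The plan is to reduce the claimed pointwise inequality to a single lower bound on the Fej\'er kernel and then close it with two elementary trigonometric estimates. Since $\rchi_\delta(\Theta)$ vanishes whenever $\|\Theta\|>\delta$ and $\mathcal{F}_J\ge 0$ everywhere, the bound $\rchi_\delta(\Theta)\le\frac{\pi^2}{4}\mathcal{F}_J(\Theta)$ is automatic off the set $\{\Theta:\|\Theta\|\le\delta\}$. Hence it suffices to show that $\mathcal{F}_J(\Theta)\ge \frac{4}{\pi^2}$ for every $\Theta$ with $\|\Theta\|\le\delta$, since there $\rchi_\delta(\Theta)=1$.

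First I would record the closed form $\mathcal{F}_J(\Theta)=J^{-2}\bigl(\sin(\pi J\Theta)/\sin(\pi\Theta)\bigr)^2$ for $\Theta\notin\Z$, with value $1$ at integers, and note that both $\rchi_\delta$ and $\mathcal{F}_J$ are even and $1$-periodic, so we may assume $0\le\Theta\le\delta$; the case $\Theta=0$ gives $\mathcal{F}_J(0)=1\ge\frac{4}{\pi^2}$ outright. The key observation is that the definition $J=\lfloor 1/(2\delta)\rfloor$ forces $J\Theta\le J\delta\le\tfrac12$, so $\pi J\Theta\in[0,\pi/2]$. On this range I would apply Jordan's inequality $\sin x\ge \frac{2}{\pi}x$ to the numerator, obtaining $\sin(\pi J\Theta)\ge 2J\Theta$, together with the elementary bound $\sin(\pi\Theta)\le\pi\Theta$ for the denominator. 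Combining these yields $\mathcal{F}_J(\Theta)\ge J^{-2}\cdot(2J\Theta)^2/(\pi\Theta)^2=\frac{4}{\pi^2}$, as desired. I would also note in passing that $J\ge 1$, which holds because $\delta\le\tfrac12$ implies $1/(2\delta)\ge 1$.

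There is no genuine obstacle here; the one point requiring care is the interaction between the floor function defining $J$ and the constraint $\|\Theta\|\le\delta$, namely the inequality $J\delta\le\tfrac12$, which is exactly what keeps the argument $\pi J\Theta$ inside the interval where Jordan's inequality applies and makes the factor $J^{-2}$ cancel cleanly against $(2J\Theta)^2$. Everything else is a routine verification.
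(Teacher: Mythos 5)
Your proof is correct and follows essentially the same route as the paper's: both derive the closed form $\mathcal{F}_J(\Theta)=J^{-2}\bigl(\sin(\pi J\Theta)/\sin(\pi\Theta)\bigr)^2$, use $J\delta\le\tfrac12$ to keep $\pi J\Theta$ in $[0,\pi/2]$, and then apply Jordan's inequality $\sin x\ge\frac{2}{\pi}x$ to the numerator and $\sin(\pi\Theta)\le\pi\Theta$ to the denominator (the paper phrases this as $\pi\|x\|\ge\sin(\pi\|x\|)\ge2\|x\|$ and works with $\|J\Theta\|=J\|\Theta\|$ instead of your periodicity reduction, but this is a cosmetic difference). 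No gaps.
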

\begin{proof}
First of all, when $\Theta\in\Z$, we have $\rchi_\delta(\Theta)=\mathcal{F}_J(\Theta)=1$. The desired inequality holds trivially. So we may assume that $\Theta\not\in\Z$.
By geometric summation, we see that
$$
\left|\sum_{j=1}^Je(j\Theta)\right|=\left|\frac{e(\Theta)(1-e(J\Theta))}{1-e(\Theta)}\right|=\left|\frac{e(\frac{J\Theta}2)-e(-\frac{J\Theta}2)}{e(\frac\Theta2)-e(-\frac\Theta2)}\right|=\left|\frac{\sin(\pi J \Theta)}{\sin(\pi\Theta)}\right|,
$$
and therefore
$$\mathcal{F}_J(\Theta)=
\left(\frac{\sin(\pi J \Theta)}{J\sin(\pi\Theta)}\right)^2.$$
Then we split into two cases: $\|\Theta\|\le\delta$ and $\|\Theta\|>\delta$. Suppose first that $\|\Theta\|\le\delta$.  Then  in view of the inequality $$J\|\Theta\|\le\frac1{2\delta}\delta=\frac12,$$ we see that $$\|J\Theta\|=J\|\Theta\|.$$  
Also note that
$$\pi\|x\|\ge\sin (\pi \|x\|)\ge2\|x\|. $$ 
Hence we have $$\left(\frac{\sin(\pi J \Theta)}{J\sin(\pi\Theta)}\right)^2=\left(\frac{\sin(\pi \|J \Theta\|)}{J\sin(\pi\|\Theta\|)}\right)^2\ge\left(\frac{2 \|J \Theta\|}{J\pi\|\Theta\|}\right)^2=\left(\frac{2J\| \Theta\|}{J\pi\|\Theta\|}\right)^2=\frac4{\pi^2}.$$
In the latter case when $\|\Theta\|>\delta$, we simply have
$$
\mathcal{F}_J(\Theta)\ge0=\rchi_\delta(\Theta).
$$
In either case, it follows that
$$\rchi_\delta(\Theta)\le \frac{\pi^2}4\mathcal{F}_J(\Theta).$$
\end{proof}

\subsubsection{Proof of Theorem \ref{UpperBound-Phi}}

Recall that

$$N_A(Q, \delta,\bm{\theta}) := \#\left\{(q,\bm{a})\in\Z^{d+1}:  \|(q, \bm{a}+\bm{\theta}^{(1)})A-\bm{\theta}^{(2)}\| < \delta,\, |(q,\bm{a})|< Q \right\}$$
and
$$N_{A}'(Q, \delta,\bm{\theta}) :=\sup_{q\in\R} \#\left\{\bm{a}\in\Z^d:  \left\|(q, \bm{a}+\bm{\theta}^{(1)})A-\bm{\theta}^{(2)}\right\| < \delta,\, |\bm{a}|< Q \right\}$$
where
$$
\bm{\theta}=(\theta_1,\ldots,\theta_d,\theta_{d+1},\ldots,\theta_{d+m})=(\bm{\theta}^{(1)}, \bm{\theta}^{(2)})\in\R^{d}\times\R^{m}.
$$

We first observe that the theorem is trivial when $\delta>\frac12$, since in this case
$$
N_A(Q, \delta,\bm{\theta})=(2Q-1)^{d+1}<(2\delta)^m(2Q)^{d+1}<C_{d+1,m}\,\delta^mQ^{d+1},
$$
and
$$
N_{A}'(Q, \delta,\bm{\theta})=(2Q-1)^{d}<(2\delta)^m(2Q)^{d}<C_{d,m}\,\delta^mQ^{d}.
$$
So without loss of generality, we may assume that
$$
0<\delta\le\frac12.
$$

Next we apply the Fej\'{e}r kernel (Lemma \ref{Fejer}) to detect when the system of $m$ diophantine inequalities included in $\|(q, \bm{a}+\bm{\theta}^{(1)})A-\bm{\theta}^{(2)}\| < \delta$ holds. Precisely
\begin{align*}
N_A(Q, \delta,\bm{\theta})
&\le\sum_{\substack{(q,\bm{a})\in\Z^{d+1}\\
|(q,\bm{a})|< Q}}\prod_{v=1}^{m}\rchi_{\delta}\big((q, \bm{a}+\bm{\theta}^{(1)})\cdot\mathrm{col}_v(A)-\theta_{d+v}\big)\\
&\le\sum_{\substack{(q,\bm{a})\in\Z^{d+1}\\
|(q,\bm{a})|< Q}}\prod_{v = 1}^{m}\left(\frac{\pi^2}4\mathcal{F}_J\big((q, \bm{a}+\bm{\theta}^{(1)})\cdot\mathrm{col}_v(A)-\theta_{d+v}\big)\right)\\
&= \left(\frac{\pi^2}{4J^2}\right)^{m}\sum_{\substack{(q,\bm{a})\in\Z^{d+1}\\
|(q,\bm{a})|< Q}}\prod_{v = 1}^{m}\left|\sum_{j_v= 1}^J
e\Big(j_v\big((q, \bm{a}+\bm{\theta}^{(1)})\cdot\mathrm{col}_v(A)-\theta_{d+v}\big)\Big)\right|^2.
\end{align*}
The product over $v$ in the last line is
\begin{align*}
&\Bigg|\sum_{\substack{\bm{j}\in\N^{m}\\|\bm{j}| \le J}}
e\left(\sum_{v = 1}^{m}j_v\big((q, \bm{a}+\bm{\theta}^{(1)})\cdot\mathrm{col}_v(A)-\theta_{d+v}\big)\right)\Bigg|^2\\
=& \Bigg|\sum_{\substack{\bm{j}\in\N^{m}\\|\bm{j}| \le J}}c(\bm{j})e\left((q,\bm{a})A\bm{j}^\mathrm{T}\right)\Bigg|^2
\end{align*}
where
$$
c(\bm{j})=e\left(\bm{\theta}^{(1)}A'\bm{j}^\mathrm{T}-\bm{\theta}^{(2)}\cdot\bm{j}\right).
$$

Let $$P_{\bm{j}}:=A\bm{j}^\mathrm{T}.$$
Then when $\bm{j}_1, \bm{j}_2\in\N^{m}$ with $\bm{j}_1\neq \bm{j}_2$ and $|\bm{j}_1|, |\bm{j}_2|\le J$, we have $|\bm{j}_1-\bm{j}_2|< J$. Since $A$ is $\phi$-badly approximable and $\phi$ is decreasing, it follows that
$$
\|P_{\bm{j}_1}-P_{\bm{j}_2}\|=\|A(\bm{j}_1-\bm{j}_2)^\mathrm{T}\|\ge\phi(|\bm{j}_1-\bm{j}_2|)\ge\phi(J),
$$
which shows that the set of  points $\{P_{\bm{j}}\mid\bm{j}\in\N^{m},\, |\bm{j}| \le J\}$ is $\phi(J)$-separated modulo 1. Hence
we may apply the dual large sieve inequality (Lemma \ref{DualLargeSieve})  and obtain 

\begin{align*}
\sum_{\substack{(q,\bm{a})\in\Z^{d+1}\\
|(q,\bm{a})|< Q}}\Bigg|&\sum_{\substack{\bm{j}\in\N^{m}\\|\bm{j}| \le J}}c(\bm{j})e\left((q, \bm{a})A\bm{j}^\mathrm{T}\right)\Bigg|^2\\
\le&\left((2Q)^\frac12+\phi( J)^{-\frac12}\right)^{2(d+1)}\sum_{\substack{\bm{j}\in\N^{m}\\|\bm{j}| \le J}}|c(\bm{j})|^2\\
\quad\le&8^{d+1}\max\left\{Q,\,\phi( J)^{-1}\right\}^{d+1}J^{m},
\end{align*}
where in the last line we used 
the fact that $$|c(\bm{j})|=1.$$

Finally, on merging the above estimates and noting that
$$
\frac1{4\delta}< J=\left\lfloor \frac1{2\delta}\right\rfloor< \frac1\delta
$$
we arrive at the desired conclusion
\begin{align*}
N_A(Q, \delta,\bm{\theta})&\le\left(\frac{\pi^2}{4J^2}\right)^{m} 8^{d+1}J^{m}\max\left\{Q,\,\phi( J)^{-1}\right\}^{d+1}\\
&\le 8^{d+1}\pi^{2m}\delta^m\max\left\{Q,\,\phi( \delta^{-1})^{-1}\right\}^{d+1}.
\end{align*}

This proves part (a). To prove part (b), we follow the same argument. The only difference is that we will not make use of the first row of $A$ in the large sieve step, since we are not summing over $q$ anymore. This leads to a slightly different set of points and their corresponding coefficients. More precisely

\begin{align*}
&
\sum_{\substack{\bm{a}\in\Z^{d}\\
|\bm{a}|< Q}}\Bigg|\sum_{\substack{\bm{j}\in\N^{m}\\|\bm{j}| \le J}}
e\left(\sum_{v = 1}^{m}j_v\big((q, \bm{a}+\bm{\theta}^{(1)})\cdot\mathrm{col}_v(A)-\theta_{d+v}\big)\right)\Bigg|^2\\
=&\sum_{\substack{\bm{a}\in\Z^{d}\\
|\bm{a}|< Q}}\Bigg|\sum_{\substack{\bm{j}\in\N^{m}\\|\bm{j}| \le J}}c'(\bm{j})e\left(\bm{a}A'\bm{j}^\mathrm{T}\right)\Bigg|^2
\end{align*}
where
$$
c'(\bm{j})=e\left(q\bm{\beta}\cdot \bm{j}+\bm{\theta}^{(1)}A'\bm{j}^\mathrm{T}-\bm{\theta}^{(2)}\cdot\bm{j}\right).
$$
Now the set of points $$\left\{A'\bm{j}^\mathrm{T}\bigm|\bm{j}\in\N^{m}, \,|\bm{j}| \le J\right\}$$
 can again be shown to be $\phi(J)$-separated modulo 1, as long as $A'$ is $\phi$-badly approximable. Then we continue to apply the analytic large sieve, though this time it is $d$-dimensional instead of $d+1$. Note that the dependence on $q$ will disappear when applying $|c'(\bm{j})|=1$ after the large sieve step. Therefore the upper bound obtained this way is uniform in $q$.

The rest of the argument is exactly the same as the first part, and in order to avoid redundancy we leave it to the readers.

\subsubsection{Proof of Theorem \ref{UpperBound} modulo Theorem \ref{UpperBound-Phi}}
We will only prove (a), as the proof of (b) is very much analogous.  
 Since $\omega(A)=\omega$, by the definition of diophantine exponent
 $A$ is not $\phi$-approximable, where $\phi:t\to t^{-\omega-\varepsilon'}$ for any $\varepsilon'>0$. Then by Lemma \ref{PsiBadAppr}, $A$ is $c_0\phi$-badly approximable for some $1\ge c_0>0$ depending on $A$ and $\varepsilon'$.  Therefore by Theorem \ref{UpperBound-Phi}(a), we know that
\begin{equation}\label{OmegaUpperBound}
N_A(Q, \delta,\bm{\theta})\le C_{d+1,m}\, \delta^m\max\left\{Q,\,\frac1{c_0\delta^{\omega+\varepsilon'}}\right\}^{d+1}.
\end{equation}

To prove the first part of (a), let $\varepsilon'=\frac{\omega^2\varepsilon}2$.
Then it is readily checked that when $\delta\ge Q^{-\frac1\omega+\varepsilon}$ we have the inequality
$$
\frac1{c_0\delta^{\omega+\varepsilon'}}\le c_0^{-1}Q^{(\frac1\omega-\varepsilon)(\omega+\varepsilon')}\le c_0^{-1}Q^{1-\frac{\varepsilon\omega}2},
$$
which is less than $Q$ when $Q$ is sufficiently large. 

To prove the second part of (a), we  take $\varepsilon'=\frac{\omega\varepsilon}{d+1}$. It then follows from \eqref{OmegaUpperBound} that
     $$
     N_A(Q, \delta,\bm{\theta})\le C_{d+1,m}\max\left\{\delta^mQ^{d+1},\,c_0^{-d-1}\delta^{m-(d+1+\varepsilon)\omega}\right\}.
     $$
     Since the exponent of the $\delta$ in the second term is negative, we note that  for $\delta\ge\delta_0:= Q^{-\frac1\omega}$   this is further bounded by 
     $$
     C_{d+1,m}\max\left\{\delta^mQ^{d+1},\,c_0^{-d-1}Q^{-\frac{m}\omega+d+1+\varepsilon}\right\}.
     $$
 So it just remains to check, by the fact that $N_A(Q, \delta,\bm{\theta})$ is increasing in $\delta$ for fixed $Q$ and $\bm{\theta}$, that when $\delta< \delta_0$ we have
$$
N_A(Q, \delta,\bm{\theta})\le N_A(Q, \delta_0,\bm{\theta})\le C(A,\varepsilon)Q^{-\frac{m}\omega+d+1+\varepsilon},
$$
where 
$$
C(A,\varepsilon):=C_{d+1,m}\,c_0^{-d-1}.
$$
So in any case, we conclude for all $\delta>0$ and positive integer $Q$  that
$$
 N_A(Q, \delta,\bm{\theta})\le   \max\left\{  C_{d+1,m}\delta^mQ^{d+1},\,C(A,\varepsilon)Q^{-\frac{m}\omega+d+1+\varepsilon}\right\}.
     $$
 
\subsubsection{Proof of Theorem \ref{DualUpperBound}} Let us first assume that $0<\delta\le \frac12$.
We follow the proof of Theorem \ref{UpperBound-Phi} until we arrive at
\begin{align*}
N_A(Q, \delta,\bm{\theta})\le \left(\frac{\pi^2}{4J^2}\right)^{m}\sum_{\substack{(q,\bm{a})\in\Z^{d+1}\\|(q,\bm{a})|< Q}}\Bigg|\sum_{\substack{\bm{j}\in\N^m\\|\bm{j}| \le J}}c(\bm{j})e\left((q,\bm{a})A\bm{j}^\mathrm{T}\right)\Bigg|^2.
\end{align*}
Then note that
$$
e\left((q,\bm{a})A\bm{j}^\mathrm{T}\right)=e\left(\bm{j}A^\mathrm{T}(q,\bm{a})^\mathrm{T}\right),
$$
which suggests that we may treat $\{A^\mathrm{T}(q,\bm{a})^\mathrm{T}\}$ in lieu of $\{A\bm{j}^\mathrm{T}\}$ as a set of separated points modulo 1. Indeed, by Lemma \ref{PsiBadAppr} we see that  there is some constant $0<c_0\le 1$ depending on $A$ and $\varepsilon$ such that $A^\mathrm{T}$ is $\phi$-badly approximable with $\phi:t\to c
_0t^{-\sigma-\varepsilon/m}$. Hence the point set $\{A^\mathrm{T}(q,\bm{a})^\mathrm{T}:|(q,\bm{a})|<Q\}$ is $c_0(2Q)^{-\sigma-\varepsilon/m}$ separated modulo 1. Thus, an application of the large sieve inequality (Lemma \ref{LargeSieve}) reveals that
\begin{align*}
N_A(Q, \delta,\bm{\theta})\le&\left(\frac{\pi^2}{4J^2}\right)^{m}\left(J^\frac12+c_0^{-\frac12}(2Q)^{\frac{\sigma+\varepsilon/m}2}\right)^{2m}\sum_{\substack{\bm{j}\in\N^m\\|\bm{j}| \le J}}|c(\bm{j})|^2\\
\le&\pi^{2m}J^{-m}\max\left\{J,\,c_0^{-1}(2Q)^{\sigma+\varepsilon/m}\right\}^m\\
\left(J=\left\lfloor(2\delta)^{-1}\right\rfloor\right)\quad\le&\max\left\{\pi^{2m},\,C^*(A,\varepsilon)\delta^m Q^{m\sigma+\varepsilon}\right\},
\end{align*}
where 
$$C^*(A,\varepsilon):=\pi^{2m}2^{(\sigma+2)m+\varepsilon}c_0^{-m}$$
is a constant depending on $A$ and $\varepsilon$. 

Finally, we observe that this bound actually also holds for $\delta>\frac12$, since
$$
N_A(Q, \delta,\bm{\theta})<(2Q)^{d+1}\le (2\delta)^m(2Q)^{m\sigma}<C^*(A,\varepsilon)\delta^m Q^{m\sigma}
$$
in view of $\sigma\ge\frac{d+1}m$.

\subsection{The ubiquitous lower bound}\label{ProofLowerBound}

\subsubsection{A covering lemma via Minkowski's theorem}

\begin{lem}\label{Minkowski}
Suppose $Q\in\N$ and $ Q^{-\frac1m}\le\delta\le1$. Then
$$
\mathcal{U}\subseteq\bigcup_{(q,\bm{a})\in\mathcal{R}^0(Q,\delta)}B\left(\frac{\bm{a}}{q},\frac1{qQ^{\frac1d}\delta^{\frac{m}d}}\right),
$$
where $\mathcal{R}^\kappa(Q,\delta)$ is defined in \eqref{RKappa}.
\end{lem}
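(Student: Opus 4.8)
The plan is to derive the covering from Minkowski's theorem on linear forms: the assertion ``$\bm x$ lies within $\tfrac1{qQ^{1/d}\delta^{m/d}}$ of a point of $\mathcal{R}^0(Q,\delta)$'' unwinds into a system of $n+1=d+m+1$ linear inequalities in as many integer unknowns whose coefficient matrix is unimodular and whose bounds multiply to exactly $1$. Fix $\bm x=(x_1,\dots,x_d)\in\mathcal U$ and write $A=\binom{\bm\beta}{A'}$ as in \eqref{AffineSubspace}, so that $(q,\bm a)A=q\bm\beta+\bm aA'$. I would seek a nonzero $(q,\bm a,\bm p)\in\Z\times\Z^d\times\Z^m$ with
\[
|q|\le Q,\qquad |qx_i-a_i|<\theta\ \ (1\le i\le d),\qquad \bigl|q\bm\beta+\bm aA'-\bm p\bigr|<\delta,
\]
where $\theta:=Q^{-1/d}\delta^{-m/d}$. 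In the variables $(q,\bm a,\bm p)$ the $n+1$ forms on the left have a block lower–triangular coefficient matrix with diagonal blocks $1,\,-I_d,\,-I_m$, hence determinant $\pm1$, while the product of the bounds is $Q\cdot\theta^d\cdot\delta^m=Q\cdot Q^{-1}\delta^{-m}\cdot\delta^m=1$. Thus Minkowski's linear forms theorem applies; I would invoke its sharp form, which permits strict inequality in every form except one designated form, and designate $q\mapsto q$ as that exceptional form.

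Granting such a vector, the next step is to show $q\neq0$, and this is precisely where the hypothesis $\delta\ge Q^{-1/m}$ enters, being equivalent to $\theta\le1$: if $q=0$ then the middle block gives $|a_i|<\theta\le1$, forcing $\bm a=\bm0$, whence $|\bm p|<\delta\le1$ forces $\bm p=\bm0$, contradicting nonvanishing. Replacing $(q,\bm a,\bm p)$ by its negative if necessary, I may then assume $1\le q\le Q$. The last block gives $\|(q,\bm a)A\|\le|q\bm\beta+\bm aA'-\bm p|<\delta$, and the middle block gives $|x_i-a_i/q|<\theta/q=\tfrac1{qQ^{1/d}\delta^{m/d}}$, i.e. $\bm x\in B\!\bigl(\tfrac{\bm a}{q},\tfrac1{qQ^{1/d}\delta^{m/d}}\bigr)$. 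Finally one checks $\bm a/q\in\mathcal U$ by a one‑line case analysis: since $x_i\in[0,1]$ and $|x_i-a_i/q|<\theta/q\le 1/q$, one cannot have $a_i\le-1$ (else $x_i<0$) nor $a_i\ge q+1$ (else $x_i>1$), so $0\le a_i\le q$. Hence $(q,\bm a)\in\mathcal{R}^0(Q,\delta)$ and $\bm x$ lies in the advertised ball, which is the claimed inclusion.

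The argument is short and there is no serious obstacle; the one point that genuinely needs care is the step ``$q\neq0$'' on the boundary $\delta=Q^{-1/m}$, where $\theta=1$ and the crude convex‑body theorem would only yield $|a_i|\le1$ (allowing $a_i=\pm1$): it is the sharp, strict‑inequality version of Minkowski's theorem that rules this out. The conceptual content is simply the choice of the linear forms (equivalently the convex body) so that the body has volume exactly $2^{n+1}$, after which every conclusion is forced.
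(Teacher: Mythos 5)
Your proposal is correct and follows essentially the same route as the paper: the same system of $n+1$ linear forms with unimodular coefficient matrix and bounds multiplying to $1$, Minkowski's linear forms theorem, the observation that $\delta\ge Q^{-1/m}$ forces $q\neq0$, and the same endpoint check that $\bm a/q\in\mathcal U$. Your explicit remark about needing the sharp (strict-inequality) form of Minkowski's theorem at the boundary $\delta=Q^{-1/m}$ is a fair point that the paper leaves implicit.
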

\begin{proof}
Write $A=\begin{pmatrix}\bm{\beta}\\A'\end{pmatrix}$ with $\bm{\beta}=(\beta_1,\ldots, \beta_m)$ and $A'=(\alpha_{ij})\in\R^{d\times m}$. 
For any $\bm{x}\in\mathcal{U}=[0,1]^d$, consider the linear system of diophantine inequalities in $(q,\bm{a},\bm{b})\in\Z^{n+1}$
$$
\left\{
\begin{aligned}
|q|&\le Q\\
|qx_1-a_1|&<\left(Q^{\frac1d}\delta^{\frac{m}d}\right)^{-1}\\
\vdots\\
|qx_d-a_d|&<\left(Q^{\frac1d}\delta^{\frac{m}d}\right)^{-1}\\
|q\beta_1+a_1\alpha_{11}+&\cdots+a_d\alpha_{d1}-b_1|<\delta\\
\vdots\\
|q\beta_m+a_1\alpha_{1m}+&\cdots+a_d\alpha_{dm}-b_m|<\delta.
\end{aligned}
\right.
$$
Note that the linear forms on the left hand side can be rewritten in the matrix form
$$
\begin{pmatrix}
1&\bm{x}&\bm{\beta}\\
\bm{0}&-I_d&A'\\
0&\bm{0}&-I_m
\end{pmatrix}^\mathrm{T}
\begin{pmatrix}
q&\bm{a}&\bm{b}
\end{pmatrix}^\mathrm{T},
$$
and that
$$
Q\left(Q^{\frac1d}\delta^{\frac{m}d}\right)^{-d}\delta^m=1.
$$
Therefore by Minkowski's theorem on linear forms, we can find a nonzero integer solution $(q,\bm{a},\bm{b})$ to the above system. 
Moreover, we claim that there is an integer solution with $q>0$. Otherwise by symmetry $q=0$.  Then it follows from the condition $Q^{-\frac1m}\le\delta$ that $\left(Q^\frac1d\delta^\frac{m}d\right)^{-1}\le 1$, which, together with $\delta\le 1$, would force $(\bm{a},\bm{b})=\bm{0}$. 

To conclude the proof, it remains to check that $\frac{\bm{a}}q\in\mathcal{U}$, i.e. $0\le a_i\le q$ for all $i=1,\ldots, d$. This immediately follows from 
$$
-1\le qx_i-\left(Q^{\frac1d}\delta^{\frac{m}d}\right)^{-1}< a_i< qx_i+\left(Q^{\frac1d}\delta^{\frac{m}d}\right)^{-1}\le q+1, \quad i=1,\ldots, d.
$$

\end{proof}

\subsubsection{Proof of Theorem \ref{UbiquityLowerBound} modulo Theorem \ref{UpperBound}}
In view of Lemma \ref{Minkowski},  it suffices to show that there exists $0<\kappa<1$ such that for $1\ge\delta\ge Q^{-\tau_0+\varepsilon}$ and for sufficiently  large $Q$ we have
$$ \mu_d\left(\mathcal{B}\cap \bigcup_{\substack{(q,\bm{a})\in\N\times\Z^{d}\\|(q,\bm{a})|\le \kappa Q\\\|(q,\bm{a})A\|<\delta}}B\left(\frac{\bm{a}}q,\frac{1}{qQ^{\frac{1}d}\delta^{\frac{m}d}}\right)\right)
 \le
 \frac12\mu_d(\mathcal{B}).
$$ 
Without loss of generality, we may assume $Q\ge1/\kappa$, since otherwise there is nothing to prove.
Then splitting the summation over $|(q,\bm{a})|\le\kappa Q$ into dyadic ranges $2^k\le|(q,\bm{a})|<2^{k+1}$ and applying Theorem \ref{UpperBound}(a) to each of them, the left hand side is bounded by
\begin{align*}
    \sum_{\substack{(q,\bm{a})\in\N\times\Z^{d}\\|(q,\bm{a})|\le \kappa Q\\\|(q,\bm{a})A\|<\delta}}\frac{2^d}{q^dQ\delta^m}&\le 
    \frac{2^d}{Q\delta^m}\sum_{k=0}^{\lfloor\log_2(\kappa Q)\rfloor}N_A(2^{k+1},\delta)2^{-kd}\\[-20pt]
&\le\frac{2^d}{Q\delta^m} \sum_{k=0}^{\lfloor\log_2(\kappa Q)\rfloor}C_{d+1,m}\,\delta^m2^{k+d+1}
+C(A,\varepsilon)2^{(k+1)(1-\frac{m}\omega+\frac\varepsilon2)+d}\\
&\le4^{d+1}C_{d+1,m}\,\kappa+\frac{4^d}{Q\delta^m}C(A,\varepsilon)(2 Q)^{\max\{1-\frac{m}\omega,0\}+\frac\varepsilon2}(\log_2Q+1)\\
&\le4^{d+1}C_{d+1,m}\,\kappa+4^{d+1}C(A,\varepsilon)(\delta Q^{\tau_0})^{-m}Q^{\frac{\varepsilon}2}\log_2Q\\
(\delta Q^{\tau_0}\ge Q^\varepsilon)\quad&\le4^{d+1}C_{d+1,m}\,\kappa+4^{d+1}C(A,\varepsilon)Q^{-\frac{\varepsilon}2}\log_2Q.
\end{align*}
This can be made 
$$
\le \frac{\mu_d(\mathcal{B})}2
$$
when we choose 
$$\kappa=\frac{\mu_d(\mathcal{B})}{4^{d+2}C_{d+1,m}}
$$
and take Q sufficiently large, say $Q\ge Q_0(A,\varepsilon,\mathcal{B})$, so that the second term above is bounded by  $\frac14\mu_d(\mathcal{B})$.

\section{Diophantine approximation on affine subspaces}

\subsection{Two general Jarn\'{i}k type theorems for convergence}

Here we state and prove two general Jarn\'{i}k type theorems in the convergence aspect, of which Theorems \ref{CriticalKTC}, \ref{JTC} and \ref{StrongJTC} are consequences. 
 
\begin{thm}\label{JTC-Phi}  Let $\mathcal{L}$ be an affine subspace parametrized by $A$ as in  \eqref{AffineSubspace}, and let $0<s\le d$.  Suppose that $A$ is not $\phi$-approximable for a decreasing function  $\phi:\R_{>0}\to\R_{>0}$, and also
suppose that there exists a decreasing function
$\gamma:\N\to\R_{>0}$ satisfying the following two conditions:
\begin{equation}\label{GammaCondition}
\begin{aligned}
(i)&\quad\inf_{q\in\N} \phi\left(\gamma(q)^{-1}\right)q>0;
\\
 (ii)&  \quad \sum_{q=1}^\infty\gamma(q)^{m+ s}q^{d - s}<\infty.
\end{aligned}
\end{equation}
Then for any decreasing function $\psi:\N\to\R_{\ge0}$ such that the convergence condition \eqref{ConvCondition} holds,
and for any $\bm{\theta}\in\R^n$, we have
$$\mathcal{H}^s(\mathcal{S}_n(\psi,\bm{\theta}) \cap \mathcal{L}) = 0.
$$
\end{thm}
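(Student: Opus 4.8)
The plan is to realise $\mathcal{S}_n(\psi,\bm{\theta})\cap\mathcal{L}$ as a $\limsup$ set of balls in the parameter space and then invoke the Hausdorff--Cantelli lemma (the convergence half of Borel--Cantelli for Hausdorff measures). Writing a generic point of $\mathcal{L}$ as $\iota(\bm{x})=(\bm{x},\bm{\beta}+\bm{x}A')$, $\bm{x}\in\R^d$, the affine injection $\iota$ is bi-Lipschitz, so it suffices to show $\mathcal{H}^s(\{\bm{x}:\iota(\bm{x})\in\mathcal{S}_n(\psi,\bm{\theta})\})=0$; intersecting with a fixed box $[-R,R]^d$ and afterwards letting $R\to\infty$, we may assume $\bm{x}$ ranges over a bounded region. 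If $\iota(\bm{x})$ is $(\psi,\bm{\theta})$-approximable then for infinitely many $q$ there is $\bm{a}\in\Z^d$ with $\bm{x}\in B\!\left(\tfrac{\bm{a}+\bm{\theta}^{(1)}}{q},\tfrac{\psi(q)}{q}\right)$, and substituting $\bm{a}+\bm{\theta}^{(1)}=q\bm{x}+O(\psi(q))$ into $(q,\bm{a}+\bm{\theta}^{(1)})A=q\bm{\beta}+(\bm{a}+\bm{\theta}^{(1)})A'$ and absorbing the error term shows $\|(q,\bm{a}+\bm{\theta}^{(1)})A-\bm{\theta}^{(2)}\|\ll\psi(q)$ while $|(q,\bm{a})|\ll q$; hence $(q,\bm{a})$ is one of the tuples enumerated by $N_A\!\left(Cq,C\psi(q),\bm{\theta}\right)$ for a suitable $C=C(A,R)$. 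Using the monotonicity of $\psi$ to replace $\psi(q)$ by $\psi(2^k)$ over a dyadic block $2^k<q\le2^{k+1}$, the $\bm{x}$-set is contained in $\limsup_k E_k$, where $E_k$ is a union of at most $N_A\!\left(C2^{k+1},C\psi(2^k),\bm{\theta}\right)$ balls, each of radius at most $\psi(2^k)/2^k$. By Hausdorff--Cantelli it therefore suffices to establish
$$
\sum_{k}\left(\frac{\psi(2^k)}{2^k}\right)^{\!s}N_A\!\left(C2^{k+1},C\psi(2^k),\bm{\theta}\right)<\infty .
$$

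The role of the auxiliary function $\gamma$ becomes transparent after a single preliminary reduction: replace $\psi$ by $\Psi:=\max\{\psi,\gamma\}$. This only enlarges $\mathcal{S}_n(\psi,\bm{\theta})$, keeps the approximation function decreasing, and preserves the convergence hypothesis since $\Psi^{m+s}\le\psi^{m+s}+\gamma^{m+s}$ and $\sum_q\gamma(q)^{m+s}q^{d-s}<\infty$ by \eqref{GammaCondition}(ii); so I may assume $\psi\ge\gamma$ from the outset. Since $A$ is not $\phi$-approximable, Lemma \ref{PsiBadAppr} renders it $c_0\phi$-badly approximable for some $c_0\in(0,1]$, so Theorem \ref{UpperBound-Phi}(a) yields
$$
N_A\!\left(C2^{k+1},C\psi(2^k),\bm{\theta}\right)\ll\psi(2^k)^m\max\!\left\{\,2^k,\ \phi\!\left((C\psi(2^k))^{-1}\right)^{-1}\right\}^{\!d+1}.
$$
This is the sole place where \eqref{GammaCondition}(i) enters: because $\psi(2^k)\ge\gamma(2^k)$ and $\phi$ is decreasing, $\phi\!\left((C\psi(2^k))^{-1}\right)\ge\phi\!\left(\gamma(2^k)^{-1}\right)\gg2^{-k}$, so the maximum is attained by $2^k$ (after enlarging $C$ if necessary), and hence $N_A\!\left(C2^{k+1},C\psi(2^k),\bm{\theta}\right)\ll\psi(2^k)^m\,2^{k(d+1)}$.

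Substituting this into the tail sum gives the bound $\ll\sum_k\psi(2^k)^{m+s}2^{k(d+1-s)}$, and since $\psi$ (now $\Psi$) is decreasing and $d-s\ge0$, a routine dyadic comparison produces $\sum_k\psi(2^k)^{m+s}2^{k(d+1-s)}\ll\sum_q\psi(q)^{m+s}q^{d-s}$, which converges by \eqref{ConvCondition}; therefore $\mathcal{H}^s(\mathcal{S}_n(\psi,\bm{\theta})\cap\mathcal{L})=0$. All of the arithmetic is carried by the counting estimate Theorem \ref{UpperBound-Phi}, so no genuinely hard step remains; the two points needing care are the passage from ``$\iota(\bm{x})$ is $\psi$-approximable'' to ``$(q,\bm{a})$ lies in the counting set'' — where one must track the inhomogeneous shift and the size of $\bm{a}$ — and the dyadic averaging over $q$ within each block together with the $\Psi=\max\{\psi,\gamma\}$ device. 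I anticipate the latter to be the conceptual heart of the matter, as it is precisely where the monotonicity of $\psi$ and the two conditions on $\gamma$ are exploited.
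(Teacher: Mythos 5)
Your proposal is correct and follows essentially the same route as the paper: cover the projected limsup set by balls indexed by $(q,\bm{a})$, pass to dyadic blocks using the monotonicity of $\psi$, reduce to $\psi\ge\gamma$ via $\max\{\psi,\gamma\}$, invoke Lemma \ref{PsiBadAppr} and Theorem \ref{UpperBound-Phi}(a) with condition \eqref{GammaCondition}(i) killing the $\max$, and finish with the Hausdorff--Cantelli lemma. The only (harmless) difference is bookkeeping for the unbounded parameter domain: you exhaust $\R^d$ by boxes $[-R,R]^d$ and absorb $R$ into the constant in $N_A(CQ,C\psi,\bm{\theta})$, whereas the paper tiles by unit cubes and uses Lemmas \ref{Shift} and \ref{Transform} to transfer the bad-approximability of $A$ to the translated matrices $A_{\bm{k}}$.
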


\begin{thm}\label{StrongJTC-Phi}   
Let $\mathcal{L}$ be an affine subspace parametrized by $A$ as in  \eqref{AffineSubspace}, $\mathcal{Q}$ be an infinite subset of $\N$, and let $0<s\le d$. Suppose that $A'\in\R^{d\times m}$ is not $\phi$-approximable for a decreasing function  $\phi:\R_{>0}\to\R_{>0}$,  and also suppose  that there exists a decreasing function
$\gamma:\mathcal{Q}\to\R_{>0}$ satisfying the following two conditions:
\begin{equation}\label{GammaCondition'}
\begin{aligned}
(i)&\quad\inf_{q\in\mathcal{Q}} \phi\left(\gamma(q)^{-1}\right)q>0;\\
(ii)&\quad    \sum_{q \in\mathcal{Q}}\gamma(q)^{m+ s}q^{d - s}<\infty.
    \end{aligned}
\end{equation}
Then for any (not necessarily decreasing) function $\psi:\mathcal{Q}\to\R_{\ge0}$ such that the convergence condition \eqref{ConvCondition-Strong} holds,
and for any $\bm{\theta}\in\R^n$, we have$$\mathcal{H}^s(\mathcal{S}_n(\psi,\bm{\theta}) \cap \mathcal{L}) = 0.
$$
\end{thm}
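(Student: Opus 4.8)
The plan is to run a covering argument (Borel--Cantelli for Hausdorff measure) for the set $\mathcal{S}_n(\psi,\bm\theta)\cap\mathcal L$, using the counting estimate of Theorem \ref{UpperBound-Phi}(b) applied to $A'$. Since $\mathcal L$ is the graph of the affine map $\bm x\mapsto\widetilde{\bm x}A$ over $\mathcal U=[0,1]^d$ (after localizing and rescaling, we may assume $\bm x$ ranges over the unit cube), a point $(\bm x,\widetilde{\bm x}A)$ lies in $\mathcal S_n(\psi,\bm\theta)$ exactly when there are infinitely many $q\in\mathcal Q$ with $\|q\bm x-\bm\theta^{(1)}\|<\psi(q)$ and $\|q\widetilde{\bm x}A-\bm\theta^{(2)}\|<\psi(q)$ simultaneously. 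For each such $q$ and each admissible integer vector $\bm a$ with $\|q\bm x-\bm a-\bm\theta^{(1)}\|<\psi(q)$, the point $\bm x$ lies in a ball of radius $\asymp\psi(q)/q$ about $(\bm a+\bm\theta^{(1)})/q$; the second condition then forces $\|(q,\bm a+\bm\theta^{(1)})A-\bm\theta^{(2)}\|<\psi(q)+O(\psi(q))$ (the error coming from replacing $\bm x$ by $\bm a/q$ inside the linear map $A'$, which is controlled because the ball has radius $\ll\psi(q)/q$ and $A'$ is fixed), so $\bm a$ is counted by $N'_{A'}(q,\,c\psi(q),\bm\theta)$ for an absolute constant $c$.

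Next I would split into the two regimes governed by $\gamma$. When $\psi(q)\ge\gamma(q)$, one uses the trivial count $N'_{A'}(q,c\psi(q),\bm\theta)\ll\psi(q)^mq^d$... but more efficiently, the whole point of condition (i) in \eqref{GammaCondition'} is that $A'$ being $\phi$-badly approximable (after shrinking $\phi$ by Lemma \ref{PsiBadAppr}, since $A'$ is merely assumed not $\phi$-approximable) gives, via Theorem \ref{UpperBound-Phi}(b),
$$
N'_{A'}(q,c\psi(q),\bm\theta)\ll\psi(q)^m\max\Bigl\{q,\ \phi\bigl((c\psi(q))^{-1}\bigr)^{-1}\Bigr\}^d.
$$
For $\psi(q)\le\gamma(q)$, condition (i) says $\phi(\gamma(q)^{-1})\gg 1/q$, and since $\phi$ is decreasing, $\phi((c\psi(q))^{-1})\ge\phi(\gamma(q)^{-1})/{\rm const}\gg 1/q$ (absorbing the constant $c\le 1$ by the usual argument, or handling it in $\phi$), so the max is $\ll q$ and we recover $N'_{A'}(q,c\psi(q),\bm\theta)\ll\psi(q)^mq^d$. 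For $\psi(q)>\gamma(q)$ I would instead bound $\psi(q)^m\le$ (trivial bound) and use that the contribution of these $q$ is controlled by $\sum_q\gamma(q)^{m+s}q^{d-s}$ after the Hausdorff-sum step — more precisely, one uses the elementary inequality that for the $\mathcal H^s$-sum only the radius to the power $s$ enters, and for $\psi(q)>\gamma(q)$ one covers $\mathcal U$ by $\asymp q^d$ balls of radius $\asymp\psi(q)/q$ but then the Hausdorff $s$-content of $\mathcal L$ restricted to each fiber is itself bounded, so one replaces $\psi(q)$ by $\min\{\psi(q),\gamma(q)\}$ in the radius without loss. The clean way: in all cases the number of relevant balls at level $q$ is $\ll\min\{\psi(q),\gamma(q)\}^m\,q^d$ and each has radius $\ll\psi(q)/q$, but capping the radius at $\gamma(q)/q$ is harmless for an $\mathcal H^s$ cover of a $d$-dimensional set since one may always subdivide.

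Assembling the Hausdorff--Cantelli sum: the tail of $\mathcal S_n(\psi,\bm\theta)\cap\mathcal L$ beyond $q\ge N$ is covered by balls contributing
$$
\sum_{q\in\mathcal Q,\ q\ge N}\ \#\{\text{balls at level }q\}\cdot\Bigl(\frac{\psi(q)}{q}\Bigr)^s\ll\sum_{q\in\mathcal Q,\ q\ge N}\min\{\psi(q),\gamma(q)\}^m\,q^d\cdot\frac{\psi(q)^s}{q^s}\ll\sum_{q\in\mathcal Q,\ q\ge N}\bigl(\psi(q)^{m+s}q^{d-s}+\gamma(q)^{m+s}q^{d-s}\bigr),
$$
where in the last step I split according to whether $\psi(q)\le\gamma(q)$ or not and in the latter case bound $\psi(q)^m\le\psi(q)^m$ trivially but note $\psi(q)^s/q^s$ is already there so one gets $\psi(q)^{m+s}q^{d-s}$, while when $\psi(q)>\gamma(q)$ capping the radius gives $\gamma(q)^{m+s}q^{d-s}$ — actually one needs $\psi$ not too large, which is exactly where I'd use that $\mathcal L$ is covered by finitely many unit cubes and $\psi(q)\ll 1$ without loss. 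Both series converge: the first by the hypothesis \eqref{ConvCondition-Strong}, the second by condition (ii) in \eqref{GammaCondition'}. Hence the tail sum $\to 0$ as $N\to\infty$, giving $\mathcal H^s(\mathcal S_n(\psi,\bm\theta)\cap\mathcal L)=0$. Note that monotonicity of $\psi$ is never invoked — only monotonicity of $\phi$ and $\gamma$ — which is exactly why the theorem holds with $\psi$ supported on an arbitrary set $\mathcal Q$; this is the structural advantage of routing the count through Theorem \ref{UpperBound-Phi}(b) with its fixed denominator $q$ rather than summing over $q\le Q$.

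The main obstacle will be the bookkeeping in the two-regime split and, more substantively, correctly propagating the passage from "$\bm x$ is $\psi$-approximable on $\mathcal L$" to "$(q,\bm a)$ is counted by $N'_{A'}$": one must verify that the error introduced by evaluating $A'$ at $\bm a/q$ rather than at $\bm x$ is genuinely $O(\psi(q))$ and not larger, i.e. that it can be absorbed into the constant $c$. Since $\bm x$ lies within $\psi(q)/q$ of $\bm a/q$ and $A'$ has bounded entries, the perturbation of $\widetilde{\bm x}A'$ is $\ll\psi(q)/q\le\psi(q)$, so this is fine — but it does require that the covering balls have radius $\asymp\psi(q)/q$ with the correct power of $q$, which is why the fixed-$q$ counting function $N'$, not $N$, is the right tool. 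A secondary point requiring care is the reduction to $\bm x\in[0,1]^d$: one covers $\mathcal L$ by boundedly many affine images of the unit cube, applies the above to each, and the bounded number of pieces costs only a constant. Granting these routine verifications, the argument closes.
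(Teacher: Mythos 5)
Your overall architecture matches the paper's: localize to unit cubes, convert membership in $\mathcal{S}_n(\psi,\bm{\theta})\cap\mathcal{L}$ into membership of $\bm{a}$ in the fixed-denominator count $N'_{A}(2q,c_1\psi(q),\bm{\theta})$ (your perturbation analysis, with $c_1=d|A'|+1$, is fine), apply Theorem \ref{UpperBound-Phi}(b), and sum via Hausdorff--Cantelli; you also correctly identify that working with the fixed-$q$ count is what removes the monotonicity requirement on $\psi$. However, the central step is carried out with the monotonicity of $\phi$ pointing the wrong way. You invoke condition (i) in the regime $\psi(q)\le\gamma(q)$, claiming $\phi\bigl((c\psi(q))^{-1}\bigr)\ge\phi\bigl(\gamma(q)^{-1}\bigr)\gg 1/q$. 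But $\psi(q)\le\gamma(q)$ gives $(c\psi(q))^{-1}\ge\gamma(q)^{-1}$ (up to the constant $c$, and a multiplicative constant inside the argument of a general decreasing $\phi$ cannot be extracted as a multiplicative constant of its value), so since $\phi$ is decreasing the inequality runs the other way: $\phi\bigl((c\psi(q))^{-1}\bigr)\le\phi\bigl(\gamma(q)^{-1}\bigr)$. When $\psi(q)$ is much smaller than $\gamma(q)$ the maximum in Theorem \ref{UpperBound-Phi}(b) may be dominated by the term $\phi(\cdot)^{-1}$, and $N'\ll\psi(q)^mq^d$ is not justified there. Conversely, in the regime $\psi(q)>\gamma(q)$ --- which is precisely where condition (i) does give $\phi\bigl((c_1\psi(q))^{-1}\bigr)\ge\phi\bigl(\gamma(q)^{-1}\bigr)\gg 1/q$, because $c_1\ge1$ --- you abandon the counting theorem in favor of a ``radius capping / subdivision'' device, which fails for $s<d$: subdividing a ball of radius $\psi(q)/q$ into $(\psi/\gamma)^d$ balls of radius $\gamma(q)/q$ multiplies the $\mathcal{H}^s$-sum by $(\psi/\gamma)^{d-s}\ge1$. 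Your summary claim that the number of balls at level $q$ is $\ll\min\{\psi(q),\gamma(q)\}^mq^d$ is likewise unsupported; what the counting theorem actually yields is $\ll\max\{\psi(q),\gamma(q)\}^mq^d$.

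The repair is the reduction the paper uses: replace $\psi$ by $\widetilde{\psi}:=\max\{\psi,\gamma\}$, which only enlarges $\mathcal{S}_n(\psi,\bm{\theta})$ and keeps $\sum_{q\in\mathcal{Q}}\widetilde{\psi}(q)^{m+s}q^{d-s}<\infty$ by condition (ii); then $\psi\ge\gamma$ holds identically on $\mathcal{Q}$, so $\phi\bigl((c_1\psi(q))^{-1}\bigr)\ge\phi\bigl(\gamma(q)^{-1}\bigr)\gg 1/q$ and Theorem \ref{UpperBound-Phi}(b) gives $N'\ll\psi(q)^mq^d$ for every $q\in\mathcal{Q}$, after which your Hausdorff--Cantelli sum closes. (Equivalently, when $\psi(q)<\gamma(q)$ keep the radius $\psi(q)/q$ but enlarge $\delta$ to $c_1\gamma(q)$ in the count, obtaining $\gamma(q)^mq^d(\psi(q)/q)^s\le\gamma(q)^{m+s}q^{d-s}$.) One further small point: $\mathcal{L}$ is covered by countably many, not boundedly many, translates of the unit cube; this is still harmless because $\mathcal{H}^s$ is countably subadditive and Lemma \ref{Transform} preserves the diophantine hypothesis on each translate.
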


\subsubsection{Proof of Theorem \ref{CriticalKTC} modulo Theorem \ref{JTC-Phi}}

\begin{lem}\label{WellAppr}
Let $\mathcal{L}$ be an affine subspace parametrized by $A$ as in  \eqref{AffineSubspace}. For real numbers $a>0$, $a'$, $b$, and $b'$ satisfying
$$
b=\frac{a}{(n-1)a+n},\quad b'<\frac{a'}{(n-1)a+n},
$$
define the following two functions:
$$
\phi:\;t\to t^{-a}(\log t)^{-a'},
$$
and
$$
\psi:\;q\to q^{-b}(\log q)^{-b'}.
$$
Suppose $A$ is $\phi$-approximable.
Then 
$$
\mathcal{L}\subset\mathcal{S}_n(\psi),
$$
that is,
all points of $\mathcal{L}$ are simultaneously $\psi$-approximable.
\end{lem}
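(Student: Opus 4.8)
The plan is to run a quantitative transference argument --- essentially the ``dual implies simultaneous'' half of Khintchine's transference principle \eqref{TransferPrinciple}, adapted to the affine subspace $\mathcal{L}$ --- keeping careful track of constants so as to retrieve the logarithmic refinement. Fix an arbitrary $\bm{y}=(\bm{x},\widetilde{\bm{x}}A)\in\mathcal{L}$; it suffices to produce infinitely many $q\in\N$ with $\|q\bm{y}\|<\psi(q)$. First I would convert the $\phi$-approximability of $A$ into ``dual'' information on $\bm{y}$: for each of the infinitely many $\bm{q}\in\Z^m$ with $\|A\bm{q}^{\mathrm{T}}\|<\phi(|\bm{q}|)$, set $X:=|\bm{q}|$ and let $\bm{p}=(p_0,\bm{p}')\in\Z^{d+1}$ be the integer vector nearest $A\bm{q}^{\mathrm{T}}$. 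Then the integer vector $\bm{r}:=(-\bm{p}',\bm{q})\in\Z^{d}\times\Z^{m}=\Z^n$ satisfies
\[
\bm{r}\cdot\bm{y}-p_0=(1,\bm{x})\bigl(A\bm{q}^{\mathrm{T}}-\bm{p}\bigr),
\]
so $\|\bm{r}\cdot\bm{y}\|\le|\bm{r}\cdot\bm{y}-p_0|\ll_{\bm{x}}\phi(X)$, while $X\le|\bm{r}|\ll_{A}X$; equivalently $\bm{y}$ lies within $O(\phi(X))$ of the rational affine hyperplane $\{\bm{z}:\bm{r}\cdot\bm{z}=p_0\}$, whose height is $\asymp X$, and $X\to\infty$ along these $\bm{q}$.

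The second step is a Minkowski argument cut down by this hyperplane. For a positive integer parameter $Y$ I would consider the centrally symmetric convex body $K=\{(x_0,\bm{x}')\in\R^{n+1}:|x_0|\le Y,\ |\bm{x}'-x_0\bm{y}|\le\delta\}$, of volume $2^{n+1}Y\delta^n$, together with the integral linear form $\ell(x_0,\bm{x}')=\bm{r}\cdot\bm{x}'-p_0x_0$, whose coefficient vector has Euclidean norm $\asymp X$. On $K$ one has $|\ell|\ll X\delta+Y\phi(X)$, so once this quantity is $<1$ every point of $\Z^{n+1}\cap K$ lies on the rational subspace $H=\{\ell=0\}$, whose integer lattice has covolume $\ll X$. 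By Brunn's theorem the central section $K\cap H$ has $n$-dimensional volume $\gg\operatorname{vol}_{n+1}(K)\cdot X/(X\delta+Y\phi(X))$, so Minkowski's convex body theorem applied inside $H$ yields a nonzero point of $\Z^{n+1}\cap K\cap H$ as soon as $Y\delta^n\gg X\delta+Y\phi(X)$. Choosing $\delta\asymp\max\{(X/Y)^{1/(n-1)},\phi(X)^{1/n}\}$ and then $Y\asymp X\,\phi(X)^{-(n-1)/n}$ balances the two terms and makes $\delta\asymp\phi(X)^{1/n}$; the first coordinate $q$ of the resulting lattice point lies in $\{1,\dots,Y\}$ (it cannot be $0$, since $\delta<1$ for large $X$), and then $\|q\bm{y}\|\le\delta\ll\phi(X)^{1/n}$.

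It then remains to plug in $\phi(t)=t^{-a}(\log t)^{-a'}$ and $\psi(q)=q^{-b}(\log q)^{-b'}$ and verify they fit. With the above choices $Y\asymp X^{(n+(n-1)a)/n}(\log X)^{(n-1)a'/n}$, and substituting $X$ back in terms of $Y$ turns the bound $\|q\bm{y}\|\ll\phi(X)^{1/n}$ into one of the form $\ll Y^{-b}(\log Y)^{-c}$: the exponent of $Y$ comes out exactly as $-b$ precisely because $b=\tfrac{a}{(n-1)a+n}$, and a short computation identifies $c=\tfrac{a'}{(n-1)a+n}$. Since the hypothesis gives $b'<\tfrac{a'}{(n-1)a+n}=c$, we get $\|q\bm{y}\|\ll Y^{-b}(\log Y)^{-c}<Y^{-b}(\log Y)^{-b'}=\psi(Y)\le\psi(q)$ for all large $X$, using that $\psi$ is decreasing and $q\le Y$. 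Letting $\bm{q}$ run over the infinitely many solutions of $\|A\bm{q}^{\mathrm{T}}\|<\phi(|\bm{q}|)$ produces infinitely many admissible $q$ --- for any fixed $q$ the number $\|q\bm{y}\|$ is a fixed positive quantity (unless $\bm{y}\in\Q^n$, a trivial case) while our bound tends to $0$ --- whence $\bm{y}\in\mathcal{S}_n(\psi)$; as $\bm{y}\in\mathcal{L}$ was arbitrary, $\mathcal{L}\subset\mathcal{S}_n(\psi)$.

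One degenerate case deserves a word: if $\|A\bm{q}^{\mathrm{T}}\|=0$ for some nonzero $\bm{q}$ --- equivalently $\mathcal{L}$ is contained in a rational affine hyperplane --- the argument above still applies with the term $Y\phi(X)$ simply absent (so $Y$ may be taken as large as one likes), but it is cleaner to invoke the classical fact that every point of a rational affine hyperplane in $\R^n$ has simultaneous exponent at least $\tfrac1{n-1}$, which already gives $\mathcal{L}\subset\mathcal{S}_n(\psi)$ since $b<\tfrac1{n-1}$. The main obstacle is not conceptual but is the bookkeeping in the Minkowski/Brunn estimate together with the verification that the two hypotheses $b=\tfrac{a}{(n-1)a+n}$ and $b'<\tfrac{a'}{(n-1)a+n}$ are exactly the threshold values for the power part and the logarithmic part respectively; this is the single place where the precise form of the exponents enters, and where the (routine but delicate) computation sits.
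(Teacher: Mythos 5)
Your argument is correct and takes essentially the same route as the paper: both proofs first use the $\phi$-approximability of $A$ to show that every $\bm{y}=(\bm{x},\widetilde{\bm{x}}A)\in\mathcal{L}$ satisfies a dual inequality $\|\bm{r}\cdot\bm{y}\|\ll_{\bm{x}}\phi(|\bm{q}|)$ with $|\bm{r}|\asymp|\bm{q}|$ for infinitely many $\bm{q}$, and then transfer this to simultaneous approximation. The only difference is that the paper invokes Mahler's transference principle (Cassels, Ch.~V, Thm.~2) as a black box for the second step, whereas you reprove the required quantitative transference from scratch via Minkowski and Brunn; your bookkeeping correctly identifies $b=\frac{a}{(n-1)a+n}$ and the logarithmic exponent $\frac{a'}{(n-1)a+n}$ as the thresholds.
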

\begin{proof}
Since $A$ is $\phi$-approximable, there are infinitely many $\bm{q}\in\Z^m$ such that
$$
\left|A\bm{q}^\mathrm{T}+\bm{p}^\mathrm{T}\right|<\phi(|\bm{q}|)
$$
holds for some $\bm{p}=(p_0,p_1,\ldots, p_d)\in\Z^{d+1}$. Write
$\bm{p}'$ for $(p_1,\ldots, p_d)$. Then for any $\bm{x}\in\R^d$, we have
\begin{align*}
\left|p_0+(\bm{x},\widetilde{\bm{x}}A)(\bm{p}',\bm{q})^\mathrm{T}\right|&=\left|p_0+\bm{x}{\bm{p}'}^\mathrm{T}+\widetilde{\bm{x}}A\bm{q}^\mathrm{T}\right|=\left|\widetilde{\bm{x}}\left(A\bm{q}^\mathrm{T}+\bm{p}^\mathrm{T}\right)\right|\\
&\le (d+1)|\widetilde{\bm{x}}|\left|A\bm{q}^\mathrm{T}+\bm{p}^\mathrm{T}\right|<(d+1)|\widetilde{\bm{x}}|\phi(|\bm{q}|).
\end{align*}
Let $\varepsilon$ be a small positive number. Thus it is easy to see that
$$
\left\|(\bm{x},\widetilde{\bm{x}}A)(\bm{p}',\bm{q})^\mathrm{T}\right\|<|\bm{q}|^{-a}(\log |\bm{q}|)^{-a'+\varepsilon}
$$
holds for all but finitely many of the above $(\bm{p}',\bm{q})$.  Moreover, since $|\bm{p}|\ll_A |\bm{q}|$, the above inequality still holds with $|\bm{q}|$ replaced by $\left|(\bm{p}',\bm{q})\right|$ and $\varepsilon$ replaced by $2\varepsilon$ on the right hand side, after possibly excluding another finite set of $(\bm{p}',\bm{q})$. Then the lemma immediately follows after an application of Mahler's transference principle (see \cite[Chapter V Theorem 2]{Cas57}), which turns this dual approximation inequality into one about simultaneous approximation. 
\end{proof}

Now we embark on the proof of Theorem \ref{CriticalKTC}.
\begin{enumerate}[(a)]
\item Pick a small $\varepsilon>0$ such that $1+\varepsilon<-\omega'(A)$. Let
$$
\phi:\;t\to t^{-n}(\log t)^{1+\varepsilon},
$$
and
$$
\gamma:\;q\to q^{-\frac1n}(\log q)^{-\frac{1+\varepsilon}n}.
$$
Then by definition, $A$ is not $\phi$-approximable. Furthermore, one may readily check that both conditions in \eqref{GammaCondition} are met with $s=d$. So as a result of Theorem \ref{JTC-Phi}, $\mathcal{L}$ is of Khintchine type for convergence.
\item Select an $\varepsilon>0$ so that $n(1+\varepsilon)<\omega'(A)$, and let $a=n$ and $a'=n(1+\varepsilon)$. Then $A$ is $\phi$-approximable with $\phi(t)=t^{-a}(\log t)^{-a'}$. Therefore, by Lemma \ref{WellAppr} one can derive at once that
$$
\mathcal{L}\subset\mathcal{S}_n(\psi)\quad \text{with}\enskip\psi(q)=q^{-\frac1n}(\log q)^{-\frac{1+\varepsilon}n},
$$
which shows that $\mathcal{L}$ cannot be of Khintchine type for convergence. 
\end{enumerate}

\subsubsection{Proof of Theorems \ref{JTC} and \ref{StrongJTC} modulo Theorems \ref{JTC-Phi} and \ref{StrongJTC-Phi}}
 We only prove Theorem \ref{StrongJTC} using Theorem \ref{StrongJTC-Phi}, as the proof of Theorem \ref{JTC} using Theorem \ref{JTC-Phi} is almost identical (taking $\mathcal{Q}=\N$ and $\nu=1$). 

From \eqref{RangeOfS-Strong}, we deduce that
$$
\frac{d+\nu-s}{m+s}\omega<1.
$$
Hence we may find an $\varepsilon>0$ such that
$$
\left(\frac{d+\nu-s}{m+s}+\varepsilon\right)(\omega+\varepsilon)<1.
$$
Let
$$
\phi:\;t\to\frac1{t^{\omega+\varepsilon}}
$$
and
$$
\gamma:\;q\to q^{-\frac{d+\nu-s}{m+s}-\varepsilon}.
$$
By the definition of diophantine exponent, we know that $A'$ is not $\phi$-approximable. Moreover, it is readily verified that $\gamma$ satisfies both conditions in \eqref{GammaCondition'}, indeed
$$
\inf_{q\in\mathcal{Q}}\phi\left(\gamma(q)^{-1}\right)q=\inf_{q\in\mathcal{Q}}q^{1-\left(\frac{d+\nu-s}{m+s}+\varepsilon\right)(\omega+\varepsilon)}=\infty
$$
and
$$
\sum_{q \in\mathcal{Q}}\gamma(q)^{m+ s}q^{d - s}=\sum_{q\in\mathcal{Q}}q^{-\nu-\varepsilon(m+s)}<\infty.
$$
This shows that all the assumptions of Theorem \ref{StrongJTC-Phi} are met, hence Theorem \ref{StrongJTC} immediately follows.

\subsection{Reduction to the unit cube}\label{ReductionStep}
Suppose an affine subspace $\mathcal{L}$ is given by \eqref{AffineSubspace}. Let 
 $\Omega(A, \psi,\bm{\theta})$ denote the projection of $\mathcal{L}\cap \mathcal{S}_n(\psi,\bm{\theta})$ onto $\R^d$ via the map $(\bm{x},\widetilde{\bm{x}}A)\to\bm{x}$. Since this projection map  is bi-Lipschitz, we know that
\begin{equation}\label{Projection}
\begin{array}{c}
\mathcal{H}^s(\mathcal{L}\cap \mathcal{S}_n(\psi,\bm{\theta}))=0 \;(\text{resp.}\; \infty)\\ \Big\Updownarrow\\
\mathcal{H}^s(\Omega(A, \psi,\bm{\theta}))=0 \;\left(\text{resp.}\; \infty\right). 
\end{array}
\end{equation}
Moreover when $s=d$, the set $\mathcal{L}\cap \mathcal{S}_n(\psi,\bm{\theta})$ has full measure in $\mathcal{L}$ if and only if $\Omega(A, \psi,\bm{\theta})$ has full measure in $\R^d$. In the following we are going to show that it suffices to prove the desired theorems in \S\ref{DioExpExtremal} and \S\ref{KhintchineJarnik} by restricting $\Omega(A, \psi,\bm{\theta})$ to the unit cube $\mathcal{U}=[0,1]^d$.

\begin{lem} \label{Shift}
For $\bm{k}\in \Z^d$, let
$$
\mathcal{U}_{\bm{k}}=\mathcal{U}+\bm{k}.
$$
Then
$$
\Omega(A, \psi,\bm{\theta})\cap \mathcal{U}_{\bm{k}}=\Omega(A_{\bm{k}},\psi,\bm{\theta})\cap \mathcal{U}+\bm{k},
$$
where
\begin{equation}\label{Ak}
A_{\bm{k}}=\begin{pmatrix}
1&\bm{k}\\
\bm{0}&I_d
\end{pmatrix}A.
\end{equation}
\end{lem}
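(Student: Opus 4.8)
The plan is to unwind the two definitions and translate coordinates. Recall that $\Omega(A,\psi,\bm{\theta})$ is the projection onto $\R^d$ of the set of $\bm{x}\in\R^d$ for which $(\bm{x},\widetilde{\bm{x}}A)\in\mathcal{S}_n(\psi,\bm{\theta})$; equivalently, $\bm{x}\in\Omega(A,\psi,\bm{\theta})$ iff there are infinitely many $q\in\N$ with $\|q(\bm{x},\widetilde{\bm{x}}A)-\bm{\theta}\|<\psi(q)$. The idea is that a point $\bm{x}\in\mathcal{U}_{\bm{k}}$, i.e. $\bm{x}=\bm{x}'+\bm{k}$ with $\bm{x}'\in\mathcal{U}$, should satisfy the defining condition for $A$ exactly when $\bm{x}'$ satisfies the defining condition for the sheared matrix $A_{\bm{k}}$, because the shear in \eqref{Ak} is precisely what records the change of the affine chart when we shift the base variable by an integer vector $\bm{k}$.

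First I would compute $\widetilde{\bm{x}}$ in terms of $\widetilde{\bm{x}'}$: writing $\bm{x}=\bm{x}'+\bm{k}$ we have
$$
\widetilde{\bm{x}}=(1,\bm{x}'+\bm{k})=(1,\bm{x}')\begin{pmatrix}1&\bm{k}\\ \bm{0}&I_d\end{pmatrix}=\widetilde{\bm{x}'}\begin{pmatrix}1&\bm{k}\\ \bm{0}&I_d\end{pmatrix}.
$$
Hence $\widetilde{\bm{x}}A=\widetilde{\bm{x}'}A_{\bm{k}}$, so that the "fibre" coordinates of the point on $\mathcal{L}_A$ over $\bm{x}$ agree with the fibre coordinates of the point on $\mathcal{L}_{A_{\bm{k}}}$ over $\bm{x}'$. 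Next I would observe that the remaining $d$ coordinates differ only by the integer vector $\bm{k}$: the point $(\bm{x},\widetilde{\bm{x}}A)$ equals $(\bm{x}'+\bm{k},\ \widetilde{\bm{x}'}A_{\bm{k}})$. Since $\|\cdot\|$ denotes distance to the nearest integer vector, for any $q\in\N$ we have
$$
\big\|q(\bm{x},\widetilde{\bm{x}}A)-\bm{\theta}\big\|=\big\|q(\bm{x}',\widetilde{\bm{x}'}A_{\bm{k}})-\bm{\theta}+q(\bm{k},\bm{0})\big\|=\big\|q(\bm{x}',\widetilde{\bm{x}'}A_{\bm{k}})-\bm{\theta}\big\|,
$$
because $q(\bm{k},\bm{0})\in\Z^n$. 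Therefore, for a fixed $q$, the inequality $\|q(\bm{x},\widetilde{\bm{x}}A)-\bm{\theta}\|<\psi(q)$ holds iff $\|q(\bm{x}',\widetilde{\bm{x}'}A_{\bm{k}})-\bm{\theta}\|<\psi(q)$; passing to "infinitely many $q$", this gives $\bm{x}\in\mathcal{S}$-membership for $A$ iff $\bm{x}'\in\mathcal{S}$-membership for $A_{\bm{k}}$.

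Finally I would assemble these observations. Intersecting with the chart $\mathcal{U}_{\bm{k}}$: $\bm{x}\in\Omega(A,\psi,\bm{\theta})\cap\mathcal{U}_{\bm{k}}$ iff $\bm{x}=\bm{x}'+\bm{k}$ with $\bm{x}'\in\mathcal{U}$ and $\bm{x}'\in\Omega(A_{\bm{k}},\psi,\bm{\theta})$, i.e. iff $\bm{x}\in\big(\Omega(A_{\bm{k}},\psi,\bm{\theta})\cap\mathcal{U}\big)+\bm{k}$, which is the claimed identity. I do not anticipate a genuine obstacle here; the only point requiring a little care is the bookkeeping for the inhomogeneous shift $\bm{\theta}$—one must check that $\bm{\theta}$ is not itself translated, which is exactly why the cancellation above works ($q(\bm{k},\bm{0})$ is an integer vector regardless of $\bm{\theta}$, so it drops out of $\|\cdot\|$ without touching $\bm{\theta}$). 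Everything else is the routine verification that the shear matrix $A_{\bm{k}}$ in \eqref{Ak} is the correct one, which is forced by the computation $\widetilde{\bm{x}}A=\widetilde{\bm{x}'}A_{\bm{k}}$ above.
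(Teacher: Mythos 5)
Your proof is correct and follows essentially the same route as the paper: the same computation $\widetilde{\bm{x}}=\widetilde{\bm{x}'}\bigl(\begin{smallmatrix}1&\bm{k}\\ \bm{0}&I_d\end{smallmatrix}\bigr)$, followed by the observation that $\mathcal{S}_n(\psi,\bm{\theta})$ is invariant under integer translation (which you make explicit by noting $q(\bm{k},\bm{0})\in\Z^n$ drops out of $\|\cdot\|$). No issues.
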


\begin{proof}
For $\bm{x}\in\mathcal{U}_{\bm{k}}$, write $\bm{x}=\bm{y}+\bm{k}$
with $\bm{y}\in\mathcal{U}$. Then observe that
$$
\widetilde{\bm{x}}=(1,\bm{y}+\bm{k})=\widetilde{\bm{y}}\begin{pmatrix}
1&\bm{k}\\
\bm{0}&I_d
\end{pmatrix}.
$$
Hence
\begin{align*}
\Omega(A, \psi,\bm{\theta})\cap \mathcal{U}_{\bm{k}}&=\left\{\bm{x}\in\mathcal{U}_{\bm{k}}\bigm|(\bm{x},\widetilde{\bm{x}}A)\in\mathcal{S}_n(\psi,\bm{\theta})\right\}\\
&=\left\{\bm{y}\in\mathcal{U}\bigm|(\bm{y}+\bm{k},\widetilde{\bm{y}}A_{\bm{k}})\in\mathcal{S}_n(\psi,\bm{\theta})\right\}+\bm{k}.
\end{align*}
The upshot here is that $\mathcal{S}_n(\psi,\bm{\theta})$ is invariant under integer translation. So it follows that
\begin{align*}
\Omega(A, \psi,\bm{\theta})\cap \mathcal{U}_{\bm{k}}&=\left\{\bm{y}\in\mathcal{U}\bigm|(\bm{y},\widetilde{\bm{y}}A_{\bm{k}})\in\mathcal{S}_n(\psi,\bm{\theta})\right\}+\bm{k}\\
&=\Omega(A_{\bm{k}},\psi,\bm{\theta})\cap \mathcal{U}+\bm{k}.
\end{align*}
\end{proof}
The next lemma shows that the matrices $A_{\bm{k}}$ obtained in \eqref{Ak} still retain the diophantine properties of $A$.

\begin{lem}\label{Transform}
Let $A\in\R^{d\times m}$,  $D\in\Z^{d\times d}$ with $|\det D|=1$, and $B=DA$.

\begin{enumerate}[(a)]
\item If  $A\in\R^{d\times m}$ is  $\phi$-badly approximable, then  $B$ is $c_0\phi$-badly approximable for some positive constant $c_0$ depending on $D$;
\item If $A^\mathrm{T}$ is $\phi$-badly approximable, then  $B^\mathrm{T}$ is $\widetilde{\phi}$-badly approximable, where $\widetilde{\phi}:t\to\phi\left(\lambda t\right)$ for some $\lambda>0$;
\item $\omega(A)=\omega(B)$ and $\sigma(A)=\sigma(B)$.
\end{enumerate}
\end{lem}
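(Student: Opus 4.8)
The whole statement is driven by two elementary observations about a matrix $D\in\Z^{d\times d}$ with $|\det D|=1$: (i) by Cramer's rule $D^{-1}$ is again an integer matrix, so $D$ and $D^{-1}$ (and likewise $D^{\mathrm T}$ and $(D^{\mathrm T})^{-1}$) map the lattice $\Z^d$ bijectively onto itself; and (ii) $D$ and $D^{-1}$ act on $(\R^d,|\cdot|)$ as bi-Lipschitz linear maps whose constants $\|D\|,\|D^{-1}\|$ — the operator norms with respect to the sup norm — depend only on $D$. The plan is simply to insert these two facts into the relevant definitions; none of the three parts is genuinely difficult.

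For (a) I would fix a nonzero $\bm q\in\Z^m$, write $B\bm q^{\mathrm T}=D(A\bm q^{\mathrm T})$, and substitute $\bm m=D\bm k$ (a bijection of $\Z^d$ by (i)) to obtain
$$
\|B\bm q^{\mathrm T}\|=\min_{\bm k\in\Z^d}|D(A\bm q^{\mathrm T}-\bm k)|\ \ge\ \|D^{-1}\|^{-1}\min_{\bm k\in\Z^d}|A\bm q^{\mathrm T}-\bm k|=\|D^{-1}\|^{-1}\|A\bm q^{\mathrm T}\|\ \ge\ \|D^{-1}\|^{-1}\phi(|\bm q|),
$$
so $B$ is $c_0\phi$-badly approximable with $c_0:=\|D^{-1}\|^{-1}$ (here $\|D^{-1}\|\ge1$ since $D^{-1}$ is a nonsingular integer matrix, so automatically $c_0\le1$). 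For (b) I would use $B^{\mathrm T}=A^{\mathrm T}D^{\mathrm T}$: for a nonzero $\bm q\in\Z^d$, setting $\bm r^{\mathrm T}:=D^{\mathrm T}\bm q^{\mathrm T}$ gives the \emph{exact} identity $\|B^{\mathrm T}\bm q^{\mathrm T}\|=\|A^{\mathrm T}\bm r^{\mathrm T}\|$, where $\bm r$ ranges over all nonzero integer vectors because $D^{\mathrm T}$ is unimodular; hence $\|B^{\mathrm T}\bm q^{\mathrm T}\|=\|A^{\mathrm T}\bm r^{\mathrm T}\|\ge\phi(|\bm r|)\ge\phi(\lambda|\bm q|)$ with $\lambda:=\|D^{\mathrm T}\|>0$, using that $\phi$ is decreasing and $|\bm r|\le\lambda|\bm q|$. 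This is the claim with $\widetilde\phi(t)=\phi(\lambda t)$.

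For (c) I would run the estimate of (a) in both directions — for $B=DA$ and for $A=D^{-1}B$ — to get the two-sided bound $\|D^{-1}\|^{-1}\|A\bm q^{\mathrm T}\|\le\|B\bm q^{\mathrm T}\|\le\|D\|\,\|A\bm q^{\mathrm T}\|$, whence $\|A\bm q^{\mathrm T}\|<|\bm q|^{-\omega}$ has infinitely many integer solutions $\bm q$ if and only if $\|B\bm q^{\mathrm T}\|<|\bm q|^{-\omega}$ does (a bounded multiplicative constant is swallowed by an arbitrarily small loss in the exponent), giving $\omega(A)=\omega(B)$; and for $\sigma=\omega(\cdot^{\mathrm T})$ I would use the substitution $\bm r^{\mathrm T}=D^{\mathrm T}\bm q^{\mathrm T}$ from (b), a bijection of nonzero integer vectors with $|\bm r|\asymp_{D}|\bm q|$, to transfer the infinitely-often inequality from $A^{\mathrm T}$ to $B^{\mathrm T}$ up to an arbitrarily small change of exponent, giving $\sigma(A)=\sigma(B)$. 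The closest thing to an obstacle is purely bookkeeping: one must use \emph{unimodularity} of $D$ rather than mere invertibility (that is exactly what makes $\bm m\mapsto D\bm k$ and $\bm q\mapsto\bm q D$ bijections of $\Z^d$), and one must note that the stray constants and the comparison $|\bm r|\asymp_{D}|\bm q|$ in (c) are harmless because $\omega(\cdot)$ and $\sigma(\cdot)$ are defined by suprema and the notion of $\phi$-bad approximability already permits an arbitrary decreasing $\phi$.
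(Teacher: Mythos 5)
Your proposal is correct and follows essentially the same route as the paper: part (a) rests on the integrality of $D^{-1}$ (the paper phrases it as a contradiction via $\|A\bm{q}^\mathrm{T}\|=\|D^{-1}B\bm{q}^\mathrm{T}\|\le d|D^{-1}|\,\|B\bm{q}^\mathrm{T}\|$, which is the same lattice-preservation fact you use, with the comparable constant $c_0=(d|D^{-1}|)^{-1}$ in place of your operator-norm constant), part (b) is the identical one-line estimate $\|A^\mathrm{T}D^\mathrm{T}\bm{q}^\mathrm{T}\|\ge\phi(|D^\mathrm{T}\bm{q}^\mathrm{T}|)\ge\phi(\lambda|\bm{q}|)$, and part (c) is obtained, as you do, by reversing the roles of $A$ and $B$. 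No gaps.
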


\begin{proof}
(a) 
Since $\det D=\pm1$, we see that $D^{-1}\in\Z^{d\times d}$. Let  $c_0=(d| D^{-1}|)^{-1}$. Recall that $|D^{-1}|$ denotes the supremum norm of $D^{-1}$, that is, the maximum of absolute values of its entries. 
Suppose that there exists a nonzero vector $\bm{q}\in\Z^m$ such that
$$
\big\|B\bm{q}^\mathrm{T}\big\|<c_0 \phi(|\bm{q}|).
$$
Then
$$
\left\|A\bm{q}^\mathrm{T}\right\|=\big\| D^{-1}B\,\bm{q}^\mathrm{T}\big\|\le d| D^{-1}|\,\big\|B\bm{q}^\mathrm{T}\big\|<\phi(|\bm{q}|),
$$
which contradicts with the assumption that $A$ is $\phi$-badly approximable. So this proves that $B$ is $c_0\phi$-badly approximable. 
\medskip

\noindent(b) Let $\lambda:=d|D|$. It follows from $|D^\mathrm{T}\bm{q}|\le \lambda|\bm{q}|$ that
$$
\|B^\mathrm{T}\bm{q}\|=\|A^\mathrm{T}D^\mathrm{T}\bm{q}\|\ge\phi(|D^\mathrm{T}\bm{q}|)\ge\phi(\lambda|\bm{q}|).
$$
Hence $B^\mathrm{T}$ is $\widetilde{\phi}$-badly approximable with $\widetilde{\phi}:t\to\phi\left(\lambda t\right)$.

\medskip
\noindent(c) Since  the roles of $A$ and $B$ can be reversed in parts (a) and (b), we immediately see that $\omega(A)=\omega(B)$ and $\sigma(A)=\sigma(B)$ in view of Lemma \ref{PsiBadAppr} and the definition of diophantine exponent. 
\end{proof}

\subsection{The convergence aspect}
\subsubsection{Proof of Theorem \ref{JTC-Phi}}\label{ProofJTC-Phi}

 Our plan is to first of all show that
 \begin{equation}\label{ZeroMeasure}
 \mathcal{H}^s(\Omega(A, \psi,\bm{\theta})\cap \mathcal{U})=0
 \end{equation}
under the assumption that $A$ is not $\phi$-approximable and $\psi$ is decreasing. 

Let $\Xi\left(\frac{\bm{p}+\bm{\theta}}q\right)$ denote the set of $\bm{x}\in \mathcal{U}$ satisfying the system of inequalities
\begin{equation}\label{Xi}
\begin{cases}
\big|\bm{x}-\frac{\bm{a}+\bm{\theta}^{(1)}}q\big|<\frac{\psi(q)}q\\
\big|\bm{\widetilde{x}}A-\frac{\bm{b}+\bm{\theta}^{(2)}}q\big|<\frac{\psi(q)}q
\end{cases}
\end{equation}
where $\bm{p}=(\bm{a},\bm{b})\in\Z^d\times\Z^{m}$. Clearly 
\begin{equation}\label{limsup}
\Omega(A, \psi,\bm{\theta})\cap \mathcal{U}=\limsup_{(q,\bm{p})\in\N\times\Z^n}\Xi\left(\frac{\bm{p}+\bm{\theta}}q\right).
\end{equation}

Assuming $\Xi\left(\frac{\bm{p}+\bm{\theta}}q\right)\neq \emptyset$,  we will make a few observations.
\begin{enumerate}[(i)]
\item
We have by the triangle inequality that
\begin{equation}
\begin{aligned}
&\left|\left(1,\frac{\bm{a}+\bm{\theta}^{(1)}}q\right)A-\frac{\bm{b}+\bm{\theta}^{(2)}}q\right|\\
\le&\left|\left(1,\frac{\bm{a}+\bm{\theta}^{(1)}}q\right)A-\bm{\widetilde{x}}A\right|+\left|\bm{\widetilde{x}}A-\frac{\bm{b}+\bm{\theta}^{(2)}}q\right|\\
\overset{\eqref{Xi}}{<}& \frac{c_1\psi(q)}{q}
\end{aligned}\label{NearIneq1}
\end{equation}
where 
$c_1:=d|A'|+1$
is a positive constant depending only on $A'$. Then it follows that
\begin{equation}\label{NearIneq2}
\left\|\left(q,\bm{a}+\bm{\theta}^{(1)}\right) A-\bm{\theta}^{(2)}\right\| < c_1\psi(q).
\end{equation}
\item
Because of the convergence condition \eqref{ConvCondition}, $\psi(q)\to0$ as $q\to\infty$, hence we may assume $\psi(q)\le \frac1{2c_1}$ for all sufficiently large $q$ so that \eqref{NearIneq1} has exactly one solution in $\bm{b}$ for each $(q,\bm{a})$ satisfying \eqref{NearIneq2}.
\item
For sufficiently large $q$ we see by the triangle inequality again that
\begin{equation}\label{BoundForA}
\left|\frac{\bm{a}}q\right|<|\bm{x}|+\frac{|\bm{\theta}^{(1)}|}q+\frac{\psi(q)}q<2.
\end{equation}
\end{enumerate}

So from these discussions, we deduce for sufficiently large $k\in \N$ that
\begin{align*}
&\#\left\{(q,\bm{p})\in\N\times\Z^n\biggm|2^{k}\le q<2^{k+1},\,\Xi\left(\frac{\bm{p}+\bm{\theta}}q\right)\neq\emptyset\right\}\\
\le&\#\left\{(q,\bm{a})\in\N\times\Z^d\Bigm|2^{k}\le q<2^{k+1},\,|\bm{a}|<2q,\,\eqref{NearIneq2}\right\}\\
\le &\#\left\{(q,\bm{a})\in\Z^{d+1}\Bigm||(q,\bm{a})|<2^{k+2}, \,\left\|\left(q,\bm{a}+\bm{\theta}^{(1)}\right) A-\bm{\theta}^{(2)}\right\| < c_1\psi(2^k)\right\}\\
=&N_A\left(2^{k+2},c_1\psi(2^k),\bm{\theta}\right),
\end{align*}
where in the third line above we have used the assumption that $\psi$ is decreasing in order to pass from $\psi(q)$ with $q\ge2^k$ to $\psi(2^k)$. 
Thus for each sufficiently large dyadic block $2^k\le q<2^{k+1}$, there are at most $N_A\left(2^{k+2},c_1\psi(2^k),\bm{\theta}\right)$ nonempty $\Xi\left(\frac{\bm{p}+\bm{\theta}}q\right)$, and each of them is contained in a hypercube with side length $\frac{2\psi(2^k)}{2^k}$.

Now by Lemma \ref{PsiBadAppr}, $A$ is $c_0\phi$-badly approximable for some positive constant $c_0$. Then it follows from Theorem \ref{UpperBound-Phi}(a) that
$$
N_A\left(2^{k+2},c_1\psi(2^k),\bm{\theta}\right)\\
\le C_{d+1,m}\,c_1^m \psi(2^{k})^m\max\left\{2^{k+2},\,c_0^{-1}\phi\left( \frac1{c_1\psi(2^k)}\right)^{-1}\right\}^{d+1}.
$$

Next we show that there is no loss of generality in assuming that
\begin{equation}\label{Reduction}
\psi(q)\ge\gamma(q)\quad\text{for all } q\in\N.
\end{equation}
 Otherwise, we consider the auxiliary function
$$
\widetilde{\psi}(q):=\max\{\psi(q),\gamma(q)\}.
$$
Clearly $\widetilde{\psi}$ is a decreasing function satisfying the convergence condition $$\sum\widetilde{\psi}(q)^{m+s}q^{d-s}<\infty$$  since both $\psi$ and $\gamma$ are. Moreover in view of the relation $\widetilde{\psi}(q)\ge\psi(q)$, it follows by definition that $\mathcal{S}_n(\psi)\subseteq\mathcal{S}_n(\widetilde{\psi})$. So it suffices to prove the theorem with $\psi$ replaced by $\widetilde{\psi}$. Hence, we may proceed with the proof assuming \eqref{Reduction}. 

Since $c_1\ge1$ and $\phi$ is decreasing, it is readily verified that
$$
\phi\left( \frac1{c_1\psi(2^k)}\right)\overset{\eqref{Reduction}}{\ge} \phi\left( \frac1{\gamma(2^k)}\right)\overset{\eqref{GammaCondition}(i)}{\gg}\frac1{2^k},
$$
and consequently
$$
N_A\left(2^{k+2},c_1\psi(2^k),\bm{\theta}\right)\ll \psi(2^{k})^m(2^k)^{d+1}.
$$
Equipped with this crucial estimate,  we can easily check that
\begin{align*}
&\sum_{k=1}^\infty N_A\left(2^{k+2},c_1\psi(2^k),\bm{\theta}\right)\left(\frac{2\psi(2^k)}{2^k}\right)^s\\
\ll&\sum_{k = 1}^\infty\psi(2^k)^m(2^k)^{(d+1)}\left(\frac{\psi(2^k)}{2^k}\right)^s\\
\ll&\sum_{k = 1}^\infty\psi(2^k)^{m + s}(2^k)^{(d - s+1)}
\end{align*}
which is convergent in view of \eqref{ConvCondition} and the assumption that $\psi$ is decreasing. 
Then 
by the following Hausdorff-Cantelli lemma, \eqref{ZeroMeasure} follows immediately from \eqref{limsup}.
\begin{lem}[{\cite[Lemma 3.10]{BD99}}]\label{HC}
Let $\{H_k\}$ be a sequence of hypercubes in $\R^n$ with side lengths $\ell(H_k)$ and suppose that for some $s>0$,
$$
\sum_k\ell(H_k)^s<\infty,
$$
then $\mathcal{H}^s(\limsup H_k)=0$.
\end{lem}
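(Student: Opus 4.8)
The plan is to run the standard ``convergence Borel--Cantelli'' argument for Hausdorff measure: work at the level of the premeasures $\mathcal{H}^s_\rho$ (obtained by restricting, in the definition of $\mathcal{H}^s$, to covers whose members have diameter at most $\rho$), bound each of them, and then let $\rho\to0$. Write $L:=\limsup_k H_k=\bigcap_{N\ge1}\bigcup_{k\ge N}H_k$, so that for every $N$ the family $\{H_k\}_{k\ge N}$ is a cover of $L$. The first thing I would record is that the hypothesis $\sum_k\ell(H_k)^s<\infty$ forces $\ell(H_k)\to0$, since the general term of a convergent series of nonnegative reals tends to zero. Consequently, given any $\rho>0$ there is an index $N_0(\rho)$ with $\operatorname{diam}(H_k)\le\sqrt{n}\,\ell(H_k)<\rho$ for all $k\ge N_0(\rho)$, so for every $N\ge N_0(\rho)$ the cover $\{H_k\}_{k\ge N}$ of $L$ is admissible in the definition of $\mathcal{H}^s_\rho$.

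Feeding this cover into the definition of the premeasure gives, for every $N\ge N_0(\rho)$,
$$
\mathcal{H}^s_\rho(L)\le\sum_{k\ge N}\operatorname{diam}(H_k)^s\le n^{s/2}\sum_{k\ge N}\ell(H_k)^s.
$$
Since $\sum_k\ell(H_k)^s$ converges, its tail $\sum_{k\ge N}\ell(H_k)^s$ tends to $0$ as $N\to\infty$; letting $N\to\infty$ with $\rho$ held fixed therefore yields $\mathcal{H}^s_\rho(L)=0$ for every $\rho>0$. Finally $\mathcal{H}^s(L)=\lim_{\rho\to0^+}\mathcal{H}^s_\rho(L)=\sup_{\rho>0}\mathcal{H}^s_\rho(L)=0$, which is the assertion.

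I do not expect a genuine obstacle: this is exactly the classical ``convergence'' half of the Borel--Cantelli dichotomy for Hausdorff measure, and it is precisely the statement attributed to \cite[Lemma 3.10]{BD99}, so one could equally well just defer to that reference. The only points deserving a word of care are (i) extracting $\ell(H_k)\to0$ from the convergence of the series, which is what makes the mesh of the successive covers shrink and hence legitimizes the passage $\rho\to0$; and (ii) the harmless dimensional constant $n^{s/2}$ relating the side length of a hypercube to its Euclidean diameter --- if diameters are measured in the sup-norm, the norm used throughout this paper, this constant is simply $1$.
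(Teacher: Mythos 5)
Your proof is correct and is exactly the standard Hausdorff--Cantelli argument that the paper simply defers to via the citation to \cite[Lemma 3.10]{BD99}: cover $\limsup H_k$ by the tails $\{H_k\}_{k\ge N}$, use $\ell(H_k)\to 0$ to make these covers $\rho$-admissible, and let the convergent tail sums force $\mathcal{H}^s_\rho(\limsup H_k)=0$ for every $\rho>0$. Nothing is missing, and your closing remark about the dimensional constant being $1$ in the sup-norm is consistent with the paper's conventions.
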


Finally we are poised to demonstrate $\mathcal{H}^s(\Omega(A, \psi,\bm{\theta}))=0$ by showing that $\mathcal{H}^s(\Omega(A, \psi,\bm{\theta})\cap\mathcal{U}_{\bm{k}})$=0 for each $\bm{k}\in \Z^d$. Indeed, by Lemma \ref{Shift}, we have
$$
\mathcal{H}^s\left(\Omega(A, \psi,\bm{\theta})\cap \mathcal{U}_{\bm{k}}\right)=\mathcal{H}^s\left(\Omega(A_{\bm{k}},\psi,\bm{\theta})\cap \mathcal{U}\right),
$$
where $A_{\bm{k}}$ is defined in \eqref{Ak}. Here the point is that we may apply Lemma \ref{Transform}(a) to conclude that there is some positive constant $c_0$ such that each $A_{\bm{k}}$ is $c_0\phi$-badly approximable. Then by our argument \eqref{ZeroMeasure} still holds with $A$ replaced by $A_{\bm{k}}$, which implies that
$$
\mathcal{H}^s(\Omega(A, \psi,\bm{\theta}))=\sum_{\bm{k}\in\Z^d}\mathcal{H}^s\left(\Omega(A, \psi,\bm{\theta})\cap \mathcal{U}_{\bm{k}}\right)=0.
$$
Thus $\mathcal{H}^s(\mathcal{L}\cap \mathcal{S}_n(\psi,\bm{\theta}))=0$ in view of \eqref{Projection}, which therefore concludes the proof of  Theorem \ref{JTC-Phi}. 

\subsubsection{Proof of Theorem \ref{StrongJTC-Phi}}
The proof almost works in the same manner as that of Theorem \ref{JTC-Phi}.  In the following
we will only point out the necessary modifications, since the bulk of the proof remains unchanged.    

Recall that here we assume that $A'$ is not $\phi$-approximable but we no longer assume $\psi$ is monotonic. 
The only main difference this causes is that when counting the number of nonempty sets $\Xi\left(\frac{\bm{p}+\bm{\theta}}q\right)$, we can no longer partition $q$ into dyadic ranges; instead we will invoke the stronger individual bound for each $q$ established in Theorem \ref{UpperBound-Phi}(b),
since in this case $A'$ is $c_0\phi$-badly approximable for some $c_0>0$ by Lemma \ref{PsiBadAppr}. More precisely, it follows from \eqref{NearIneq2} and \eqref{BoundForA} that for sufficiently large $q\in\mathcal{Q}$ we have
\begin{align*}
    &\#\left\{\bm{p}\in\Z^n\biggm|\Xi\left(\frac{\bm{p}+\bm{\theta}}q\right)\neq\emptyset\right\}\\
    \le &\#\left\{\bm{a}\in\Z^d\Bigm||\bm{a}|<2q, \left\|\left(q,\bm{a}+\bm{\theta}^{(1)}\right) A-\bm{\theta}^{(2)}\right\| < c_1\psi(q)\right\}\\
=&N_A'(2q,c_1\psi(q),\bm{\theta}).
\end{align*}
For the same reasoning as in the proof of Theorem \ref{JTC-Phi}, we may assume without loss of generality that
\begin{equation}\label{Reduction'}
\psi(q)\ge\gamma(q)\quad\text{for all } q\in\mathcal{Q}.
\end{equation}
Hence for $q\in\mathcal{Q}$
$$
\phi\left( \frac1{c_1\psi(q)}\right)\overset{\eqref{Reduction'}}{\ge} \phi\left( \frac1{\gamma(q)}\right)\overset{\eqref{GammaCondition'}(i)}{\gg}\frac1{q}.
$$
Plugging in this estimate into Theorem \ref{UpperBound-Phi}(b) leads to
$$
N_A'(2q,c_1\psi(q),\bm{\theta})\ll \psi(q)^mq^d.
$$
So
\begin{align*}
&\sum_{q\in\mathcal{Q}}\#\left\{\bm{p}\in\Z^n\biggm|\Xi\left(\frac{\bm{p}+\bm{\theta}}q\right)\neq\emptyset\right\}\left(\frac{2\psi(q)}q\right)^s\\
\ll&\sum_{q\in\mathcal{Q}}\psi(q)^{m+s}q^{d-s}
\end{align*}
which is convergent in view of \eqref{ConvCondition-Strong}. The rest of the argument is exactly the same as in the proof of Theorem \ref{JTC-Phi}.

\subsubsection{Proof of Theorem \ref{DiophantineExp}}\label{ProofDiophantineExp}
We will prove the contrapositive statement of  Theorem \ref{DiophantineExp}, i.e.
if
\begin{equation}\label{NuCondition1}
    \tau>\frac1d\left(1-\frac{m}{\max\{n,\omega\}}\right)=\max\left\{\frac1n,\,\frac1d\left(1-\frac{m}{\omega}\right)\right\}
\end{equation}
or
\begin{equation}\label{NuCondition2}
    \tau>\frac{m\sigma-d}n
\end{equation}
then
$$
|\mathcal{S}_n(\tau,\bm{\theta})\cap\mathcal{L}|_\mathcal{L}=0.
$$
In view of the reduction argument in \S \ref{ReductionStep}, this amounts to proving that
$$
\mu_d(\Omega(A,\psi_{\tau},\bm{\theta})\cap\mathcal{U})=0,
$$
where 
$$
\psi_\tau:\;q\to q^{-\tau}
$$
and $\Omega(A,\psi_{\tau},\bm{\theta})$ is given in \eqref{limsup}. 

Then we largely follow the proof of Theorem \ref{JTC-Phi}. In particular, in \S \ref{ProofJTC-Phi} we have shown that for sufficiently large $k\in\N$
\begin{align*}
&\#\left\{(q,\bm{p})\in\N\times\Z^n\biggm|2^{k}\le q<2^{k+1}, \,\Xi\left(\frac{\bm{p}+\bm{\theta}}q\right)\neq\emptyset\right\}\\
\le&N_A\left(2^{k+2},c_1\psi_{\tau}(2^k),\bm{\theta}\right).
\end{align*}
Suppose $\eqref{NuCondition1}$ holds. 
Let 
$$
\varepsilon=\frac12\left(\tau d-1+\frac{m}{\omega}\right),
$$
which is positive due to \eqref{NuCondition1}. Next
 by Theorem \ref{UpperBound}(a), we have
 $$
 N_A\left(2^{k+2},c_1\psi_{\tau}(2^k),\bm{\theta}\right)\ll\max\left\{(2^k)^{-\tau m+d+1},(2^k)^{d+1-\frac{m}{\omega}+\varepsilon}\right\}.
 $$
 For $2^k\le q<2^{k+1}$, note that
 $$
 \mu_d\left(\Xi\left(\frac{\bm{p}+\bm{\theta}}q\right)\right)\le \left(\frac{2\psi_{\tau}(2^k)}{2^k}\right)^d=2^d(2^k)^{-\tau d-d}.
 $$
  It just remains to check that the series
 \begin{align*}
    & \sum_{k=1}^\infty N_A\left(2^{k+2},c_1\psi_{\tau}(2^k),\bm{\theta}\right)(2^k)^{-(\tau+1)d}\\
    \ll&\sum_{k=1}^\infty\max\left\{(2^k)^{-\tau n+1},(2^k)^{-\tau d+1-\frac{m}{\omega}+\varepsilon}\right\}
 \end{align*}
 is indeed convergent under the assumption \eqref{NuCondition1}, as the desired conclusion follows immediately from this and the Borel-Cantelli lemma. 
 
On the other hand, suppose \eqref{NuCondition2} holds. Then we apply Theorem \ref{DualUpperBound} instead of Theorem \ref{UpperBound}, and deduce that
\begin{align*}
    & \sum_{k=1}^\infty N_A\left(2^{k+2},c_1\psi_{\tau}(2^k), \bm{\theta}\right)(2^k)^{-(\tau+1)d}\\
    \ll&\sum_{k=1}^\infty\max\left\{(2^k)^{-(\tau +1)d}, (2^k)^{-\tau m+m\sigma+\varepsilon-(\tau+1)d}\right\}
 \end{align*}
is again convergent on choosing a sufficiently small $\varepsilon>0$.

\subsubsection{Proof of Theorem \ref{Dimension}}\label{ProofDimension}

By Theorems \ref{UpperBound}(a) and \ref{DualUpperBound}, one notes that for any $\varepsilon>0$ and sufficiently large $Q$
$$
N_A(Q,\delta,\bm{\theta})\ll\left\{
\begin{aligned}
\delta^m Q^{d+1+\varepsilon} \quad& \text{when}\;\;\delta\ge Q^{-\frac1\omega};\\
Q^{d+1-\frac{m}{\omega}+\varepsilon}\quad&\text{when}\;\; \delta\le Q^{-\frac1\omega};\\
\delta^mQ^{m\sigma+\varepsilon}\quad&\text{when}\;\;\delta\ge Q^{-\sigma};\\
Q^{\varepsilon}\qquad&\text{when}\;\;\delta\le Q^{-\sigma}.
\end{aligned}
\right.
$$
Then by a similar covering argument and the Hausdorff-Cantelli lemma as in \S\ref{ProofJTC-Phi}, the desired conclusion follows on checking that each of the following series
$$\sum (2^k)^{-\tau m+(d+1)+\varepsilon-(1+\tau)s},
$$
$$
\sum (2^k)^{(d+1)-\frac{m}{\omega}+\varepsilon-(1+\tau)s},
$$
$$
\sum (2^k)^{-m\tau +m\sigma+\varepsilon-(1+\tau)s},
$$
and
$$
\sum(2^k)^{\varepsilon-(1+\tau)s}
$$
converges when $s$ is greater than $\frac{n+1}{\tau+1}-m$, $\frac{(d+1)\omega-m}{(\tau+1)\omega}$, $\frac{m(\sigma-\tau)}{\tau+1}$, and $0$ respectively, and choosing a sufficiently small $\varepsilon$.

\subsection{The divergence aspect}

\subsubsection{Ubiquitous systems}
We will use the ubiquitous system technique developed by Beresnevich, Dickinson, and Velani \cite{BDV06}. Let $\mathcal{U}$ be a compact set in $\R^d$ (viewed as a metric space under the supremum norm) and $\mathcal{R}:=\{R_j\}_{j\in \mathcal{J}}$ be a sequence of points in $\mathcal{U}$ (known as \emph{resonant points}) indexed by a countable set $\mathcal{J}$. Let $h:\mathcal{J}\to\N$ be a function that assigns a height $h_j$ to each point $R_j$. 
\begin{defn}
Let $\rho: \N\to\mathbb{R}_{>0}$ be a function such that $\lim_{q\to\infty}\rho(q)=0$. The system $(\mathcal{R},h)$ is called \emph{locally ubiquitous in $\mathcal{U}$ relative to $\rho$} if  for any ball $\mathcal{B}\subset\mathcal{U}$,
$$
\liminf_{k\to\infty}\mu_d\Bigg( \bigcup_{\substack{j\in\mathcal{J}\\h_j\le 2^k}}B\left(R_j,\rho(2^k)\right)\cap \mathcal{B}\Bigg)\ge\frac12\mu_d(\mathcal{B}).
$$
Here $B(\bm{x},r):=\{\bm{y}\in\R^d:|\bm{y}-\bm{x}|<r\}$. The function $\rho$ above is called the \emph{ubiquitous function}.
\end{defn}

For a function $\Psi:\N\to\R_{>0}$, let
\begin{align*}
\Lambda_\mathcal{R}(\Psi):&=\left\{\bm{x}\in\mathcal{U}:|\bm{x}-R_j|<\Psi(h_j)\text{ for infinitely many }j\in\mathcal{J}\right\}\\
&=\mathcal{U}\cap\limsup_{j\in\mathcal{J}}B(R_j,\Psi(h_j)).
\end{align*}
The following lemma is one of the main results on ubiquitous systems.
\begin{lem}\label{UbiquitousTheorem}
Let $(\mathcal{R},h)$ be a locally ubiquitous system in $\mathcal{U}$ relative to $\rho$, and let $\Psi:\N\to\R_{>0}$ be a decreasing function such that $\Psi(2^{k+1})\le \frac12\Psi(2^k)$ for all sufficiently large $k\in\N$. Then for any $s\in(0,d]$, we have
$$
\mathcal{H}^s(\Lambda_\mathcal{R}(\Psi))=\mathcal{H}^s(\mathcal{U})\quad \text{if}\quad \sum_{k=1}^\infty\frac{\Psi(2^k)^s}{\rho(2^k)^d}=\infty.
$$
\end{lem}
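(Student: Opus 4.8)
The plan is to deduce the statement from the mass distribution principle (Frostman's lemma) applied to a Cantor-type subset of $\Lambda_\mathcal{R}(\Psi)$. Since $\Lambda_\mathcal{R}(\Psi)$ is by definition a $\limsup$ of the open balls $B(R_j,\Psi(h_j))$, hence a $G_\delta$ set, it suffices to prove the local lower bound $\mathcal{H}^s(\Lambda_\mathcal{R}(\Psi)\cap\mathcal{B})>0$ for every ball $\mathcal{B}\subseteq\mathcal{U}$: a standard Hausdorff-measure zero-one law for $\limsup$ sets then upgrades this to $\mathcal{H}^s(\Lambda_\mathcal{R}(\Psi))=\mathcal{H}^s(\mathcal{U})$ (which for $s<d$ just says the measure is infinite). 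Note that, whereas the convergence results above rested on the Hausdorff--Cantelli lemma (Lemma~\ref{HC}), which produces only upper bounds for $\mathcal{H}^s$, here I need a genuine lower bound, which is why the mass distribution principle must be brought in.

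Fix $\mathcal{B}$. I would construct nested finite families $\mathcal{C}_0=\{\mathcal{B}\}\supset\mathcal{C}_1\supset\mathcal{C}_2\supset\cdots$ of pairwise disjoint closed balls, each $J'\in\mathcal{C}_t$ being of the form $\overline{B(R_j,\Psi(h_j))}$ for some resonant point $R_j$ and contained in a unique parent $J\in\mathcal{C}_{t-1}$, arranged so that the heights $h_j$ occurring along nested chains tend to infinity; the limit set $\mathbf{K}=\bigcap_t\bigcup_{J\in\mathcal{C}_t}J$ is then a compact subset of $\Lambda_\mathcal{R}(\Psi)\cap\mathcal{B}$.

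To pass from a ball $J\in\mathcal{C}_{t-1}$ to its children I would fix a long block of dyadic scales $2^{k_{t-1}}<2^k\le 2^{k_t}$ and, at each such scale, invoke local ubiquity: $\bigcup_{h_j\le 2^k}B(R_j,\rho(2^k))$ covers at least half of $J$, and since $\rho(q)\to0$ the resonant points of height below $2^{k_{t-1}}$ contribute negligibly once $k$ is large, so the covering effectively uses points of height in $(2^{k_{t-1}},2^k]$; this is what forces the heights to grow along chains. A greedy/Vitali selection, working from the coarsest scale downwards and using $\Psi(2^{k+1})\le\tfrac12\Psi(2^k)$ to keep the scales cleanly separated, then yields inside $J$ a pairwise disjoint family of balls $B(R_j,\Psi(h_j))$ whose total $s$-content $\sum_j(\operatorname{diam}B(R_j,\Psi(h_j)))^s$ is $\gg\mu_d(J)\sum_{k_{t-1}<k\le k_t}\Psi(2^k)^s\rho(2^k)^{-d}$. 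I would take $\mathcal{C}_t\cap J$ to be this family and split the mass of $J$ among its children in proportion to $(\operatorname{diam})^s$. Because $\sum_k\Psi(2^k)^s/\rho(2^k)^d=\infty$, the inner sum over the block can be made to exceed any prescribed constant by enlarging $k_t$, which is exactly what prevents the per-ball masses from growing relative to $(\operatorname{diam})^s$ as $t$ increases.

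The outcome would be a Borel probability measure $\mu$ supported on $\mathbf{K}$ with $\mu(J)\asymp(\operatorname{diam}J)^s$ for every constituent ball $J$; comparing an arbitrary small ball $B(x,r)$ with the generation whose radius is closest to $r$ then gives $\mu(B(x,r))\ll r^s$, so the mass distribution principle yields $\mathcal{H}^s(\mathbf{K})>0$ and hence the desired local lower bound. I expect the main obstacle to be precisely this balancing act inside the construction---choosing the blocks of scales, performing the greedy ball selection so that disjointness and containment in the parent are respected, guaranteeing that heights along chains go to infinity, and apportioning the mass so that the Frostman bound $\mu(B(x,r))\ll r^s$ survives all generations---with the divergence hypothesis serving as exactly the fuel that keeps the process alive at infinitely many stages; the concluding zero-one-law step, though routine, also calls for some care with the density theory of Hausdorff measure on $\limsup$ sets.
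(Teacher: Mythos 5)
First, a point of comparison: the paper does not prove Lemma \ref{UbiquitousTheorem} at all --- it is imported verbatim as ``one of the main results on ubiquitous systems'' from Beresnevich--Dickinson--Velani \cite{BDV06}, so there is no in-paper argument to measure you against. Your outline is, in substance, the proof strategy of that reference: build a nested Cantor-type set out of balls $B(R_j,\Psi(h_j))$ whose parents are populated at each stage by a Vitali-type greedy selection from the ubiquity covering $\bigcup_{h_j\le 2^k}B(R_j,\rho(2^k))$, distribute mass in proportion to $(\operatorname{diam})^s$, and invoke the mass distribution principle, with the divergence of $\sum_k\Psi(2^k)^s\rho(2^k)^{-d}$ supplying the local $s$-content needed at every generation. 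The identification of where the real work lies (block selection, disjointness, forcing heights to grow, propagating the Frostman bound) is accurate, and the role of $\Psi(2^{k+1})\le\frac12\Psi(2^k)$ in separating scales is correctly placed. One further case split you omit, but which belongs to the ``balancing act'': the scales where $\Psi(2^k)\ge\rho(2^k)$ must be handled separately (there the ubiquity balls themselves already lie inside the $\Psi$-balls and the conclusion is easier), while the Cantor construction is run on the scales where $\Psi(2^k)<\rho(2^k)$.

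The one step I would push back on is the concluding upgrade. For $s=d$ your appeal to a density/zero-one argument is fine: local positivity of Lebesgue measure for a $\limsup$ of balls does yield full measure. But for $s<d$ there is no general principle asserting that a set with positive $\mathcal{H}^s$-measure in every ball has infinite $\mathcal{H}^s$-measure (one can place countably many $s$-sets of summable measure densely so that every ball contains one entirely). The way \cite{BDV06} gets $\mathcal{H}^s(\Lambda_\mathcal{R}(\Psi)\cap\mathcal{B})=\infty$ is by exploiting exactly the tunability you mention in passing: since each block sum $\sum_{k_{t-1}<k\le k_t}\Psi(2^k)^s\rho(2^k)^{-d}$ can be made to exceed any prescribed $\eta$, the resulting measure satisfies $\mu(B(x,r))\ll r^s/\eta$, so the mass distribution principle gives $\mathcal{H}^s(\mathbf{K})\gg\eta$ with $\eta$ arbitrary. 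You should make that quantitative dependence on $\eta$ the engine of the conclusion rather than a soft zero-one law; with that adjustment the outline is the standard and correct route.
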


\subsubsection{Proof of Theorem \ref{JTD}}

We first of all verify that it can be assumed without loss of generality that
\begin{equation}\label{PsiBound}
 q^{-\frac{d+1-s}{m+s}-\varepsilon} \le  \psi(q)\le q^{-\frac{d+1-s}{m+s}}
\end{equation}
for any $\varepsilon>0$. Call the function on the left hand side $\eta(q)$ and the function on the right hand side $\xi(q)$. 
Consider the following two auxiliary functions
$$
\widehat{\psi}(q):=\max\{\psi(q),\eta(q)\},\quad \widetilde{\psi}(q):=\min\{\psi(q),\xi(q)\}.
$$
Clearly both $\widehat{\psi}$ and $\widetilde{\psi}$ are still decreasing functions. 

On one hand, note that
$$
\mathcal{S}_n(\widehat{\psi})=\mathcal{S}_n(\psi)\cup\mathcal{S}_n(\eta).
$$
Since $\eta$ satisfies the convergence condition \eqref{ConvCondition} in Theorem \ref{JTC} and the condition \eqref{RangeOfS-Div} implies \eqref{RangeOfS-Conv}, we may apply Theorem \ref{JTC} to obtain that
$$
\mathcal{H}^s(\mathcal{S}_n(\eta)\cap\mathcal{L})=0.
$$
So this shows that we may proceed with $\psi$ replaced by $\widetilde{\psi}$ and hence the first inequality in \eqref{PsiBound} can be assumed. 

On the other hand, observe that $\widetilde{\psi}$ satisfies the divergence condition \eqref{DivCondition}. 
Otherwise, if the series $\sum\widetilde{\psi}(q)^{m+s}q^{d-s}$ converges, then by the  Cauchy condensation we have $\widetilde{\psi}(q)^{m+s}q^{d+1-s}\to0$ as $q\to\infty$. This means that $\widetilde{\psi}(q)=o(\xi(q))$ and hence $\widetilde{\psi}(q)=\psi(q)$ for all but finitely many $q$. But this leads to the divergence of the above series about $\widetilde{\psi}$ since $\psi$ satisfies \eqref{DivCondition}, a contradiction. So in view of the inclusion $\mathcal{S}_n(\widetilde{\psi})\subseteq\mathcal{S}_n(\psi)$, it suffices to prove the theorem with $\psi$ replaced by $\widetilde{\psi}$, and therefore this justifies that the second inequality in \eqref{PsiBound} can be assumed.

Consider the sequence of resonant points in $\mathcal{U}=[0,1]^d$
$$
\mathcal{R}:=\left\{\bm{a}/q:(q,\bm{a})\in\mathcal{J}\right\},
$$
where
$$
\mathcal{J}:=\left\{(q,\bm{a})\in\N\times \Z^d:\|(q,\bm{a})A\|<\frac{\psi(q)}2,\,\frac{\bm{a}}q\in\mathcal{U}\right\}.
$$
For $(q,\bm{a})\in\mathcal{J}$, define $h_{(q,\bm{a})}:=q$.
Recall that $\Omega(A,\psi):=\Omega(A,\psi,\bm{0})$ is the projection of $\mathcal{S}_n(\psi)\cap\mathcal{L}$ onto $\R^d$ via the map $(\bm{x},\widetilde{\bm{x}}A)\to\bm{x}$.
\begin{lem}\label{LambdaInclusion} 
Let $\mathcal{R}$  be defined as above, and let
$$
\Psi(q):=\frac{c_2\psi(q)}q\enskip\text{with}\enskip c_2:=\min\left\{\frac1{2d|A'|},1\right\}.
$$
Then
$$
\Lambda_\mathcal{R}(\Psi)\subseteq\Omega(A,\psi).
$$
\end{lem}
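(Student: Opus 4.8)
The plan is to unwind both definitions and verify the inclusion pointwise. Suppose $\bm{x}\in\Lambda_\mathcal{R}(\Psi)$. By the definition of $\Lambda_\mathcal{R}$, there are infinitely many pairs $(q,\bm{a})\in\mathcal{J}$ with
$$
\left|\bm{x}-\frac{\bm{a}}q\right|<\Psi(q)=\frac{c_2\,\psi(q)}q .
$$
For each such pair I will show that the denominator $q$ witnesses $(\bm{x},\widetilde{\bm{x}}A)\in\mathcal{S}_n(\psi)$, that is, $\|q\bm{x}\|<\psi(q)$ and $\|q\widetilde{\bm{x}}A\|<\psi(q)$ simultaneously. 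Granting this, $\|q(\bm{x},\widetilde{\bm{x}}A)\|<\psi(q)$ for these (infinitely many) $q$, so $(\bm{x},\widetilde{\bm{x}}A)\in\mathcal{S}_n(\psi)\cap\mathcal{L}$ and hence $\bm{x}$ lies in its projection $\Omega(A,\psi)$.

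The first estimate is immediate from $c_2\le1$:
$$
\|q\bm{x}\|\le\left|q\bm{x}-\bm{a}\right|=q\left|\bm{x}-\frac{\bm{a}}q\right|<c_2\,\psi(q)\le\psi(q).
$$
For the second estimate I would write $q\widetilde{\bm{x}}A=(q,q\bm{x})A$ and compare it with $(q,\bm{a})A$; their difference involves only the lower block $A'$ of $A$, since the first coordinates agree:
$$
q\widetilde{\bm{x}}A-(q,\bm{a})A=(0,\,q\bm{x}-\bm{a})A=(q\bm{x}-\bm{a})A' .
$$
Using $(q,\bm{a})\in\mathcal{J}$, which gives $\|(q,\bm{a})A\|<\psi(q)/2$, together with the submultiplicative bound $\left|(q\bm{x}-\bm{a})A'\right|\le d|A'|\,\left|q\bm{x}-\bm{a}\right|<d|A'|\,c_2\,\psi(q)\le\psi(q)/2$, where the last inequality uses $c_2\le(2d|A'|)^{-1}$, the triangle inequality yields $\|q\widetilde{\bm{x}}A\|<\psi(q)$, as required.

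Finally, to guarantee the infinitely many $q\in\N$ demanded by the definition of $\mathcal{S}_n(\psi)$, I note that for a fixed $q$ only finitely many $\bm{a}$ satisfy $\bm{a}/q\in\mathcal{U}=[0,1]^d$, so the infinitely many admissible pairs $(q,\bm{a})$ necessarily occur with infinitely many distinct heights $h_{(q,\bm{a})}=q$; this passes the argument from pairs to genuine denominators and closes the proof. I do not expect any real obstacle here, as the argument is just a two-term triangle inequality; the only point needing care is the allocation of the slack $\psi(q)$, namely that the displacement of $\bm{x}$ from $\bm{a}/q$ must contribute at most $\psi(q)/2$ to $\|q\widetilde{\bm{x}}A\|$, which is precisely what the constant $c_2=\min\{(2d|A'|)^{-1},1\}$ in the definition of $\Psi$ is designed to ensure.
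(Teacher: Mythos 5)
Your proof is correct and follows essentially the same route as the paper's: the same two-term triangle inequality splitting $q\widetilde{\bm{x}}A$ into $(q,\bm{a})A$ (within $\psi(q)/2$ of $\Z^m$ by definition of $\mathcal{J}$) plus $(q\bm{x}-\bm{a})A'$ (bounded by $d|A'|c_2\psi(q)\le\psi(q)/2$), with the paper merely phrasing it in terms of $|\widetilde{\bm{x}}A-\bm{b}/q|$ rather than $\|q\widetilde{\bm{x}}A\|$. Your closing remark that infinitely many pairs force infinitely many distinct denominators $q$ is a fine point the paper leaves implicit, and it is handled correctly.
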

\begin{proof}
Let $\bm{x}=(x_1,\ldots,x_d)\in\Lambda_\mathcal{R}(\Psi)$. Then
there are infinitely many $(q,\bm{a})\in\N\times \Z^d$ such that
$$
\left|\bm{x}-\frac{\bm{a}}q\right|<\Psi(q)=c_2\frac{\psi(q)}q\le \frac{\psi(q)}q
$$
and
$$
\left|(q,\bm{a})A-\bm{b}\right|<\frac{\psi(q)}2\quad\text{for some }\bm{b}\in\Z^m.
$$
Then by the triangle inequality
\begin{align*}
   \left|\widetilde{\bm{x}}A-\frac{\bm{b}}q\right|&\le \left|\widetilde{\bm{x}}A-\left(1,\frac{\bm{a}}q\right)A\right|+\left|\left(1,\frac{\bm{a}}q\right)A-\frac{\bm{b}}q\right|\\
   &< d|A'|\left|\bm{x}-\frac{\bm{a}}q\right|+\frac{\psi(q)}{2q}\\
   &\le(2d|A'|c_2+1) \frac{\psi(q)}{2q}\\
   &\le \frac{\psi(q)}{q}.
\end{align*}
Thus, we have shown that $(\bm{x},\widetilde{\bm{x}}A)\in\mathcal{S}_n(\psi)$, which by definition means that $\bm{x}\in\Omega(A,\psi)$.
\end{proof}

\begin{lem}\label{Ubiquity}

Let $\mathcal{R}$, $h$ be defined as above, and let $u:\N\to\R_{>0}$ be any function such that 
\begin{equation}\label{u(q)Condition}
\lim_{q\to\infty}u(q)=\infty\enskip\text{and}\enskip u(q)=O(q).
\end{equation}
With $\tau_0$ given in \eqref{tau0} and some $\varepsilon>0$, suppose that  
\begin{equation}\label{PsiTau}
1\ge\psi(q)\ge q^{-\tau_0+\varepsilon}.
\end{equation}
Then
$(\mathcal{R},h)$ is locally ubiquitous relative to $\rho$, where 
$$
\rho(q):=\left(\frac{2^mu(q)}{\psi(q)^mq^{d+1}}\right)^{\frac1d}.
$$
\end{lem}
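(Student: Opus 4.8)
The plan is to read off local ubiquity of $(\mathcal{R},h)$ directly from the ubiquitous lower bound of Theorem~\ref{UbiquityLowerBound}, since that theorem already exhibits, for any prescribed ball, a union of balls centred at the points $\bm{a}/q$ that fills at least half of it. Fix a ball $\mathcal{B}\subseteq\mathcal{U}$. As a preliminary I would check that $\rho$ is an admissible ubiquitous function, i.e.\ $\rho(q)\to0$: from \eqref{PsiTau} together with $\tau_0=1/\max\{m,\omega\}\le1/m$ one gets $\psi(q)^m q^{d+1}\ge q^{d+1-m\tau_0+m\varepsilon}\ge q^{d+m\varepsilon}$, while $u(q)=O(q)$, so $\rho(q)^d=u(q)/(\psi(q)^m q^{d+1})=O(q^{1-d-m\varepsilon})\to0$ (recall $d\ge1$). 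One may also assume $0<\varepsilon<\tau_0$, replacing $\varepsilon$ by $\min\{\varepsilon,\tau_0/2\}$ if necessary, which only weakens \eqref{PsiTau}.

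Next I would apply Theorem~\ref{UbiquityLowerBound} to the ball $\mathcal{B}$, with the parameter there taken to be $\varepsilon/2$; this produces a constant $\kappa=\kappa(\mathcal{B})>0$. For each large $k$, set $Q=2^k$ and $\delta=\tfrac12\psi(2^k)$. The admissibility range $1\ge\delta\ge Q^{-\tau_0+\varepsilon/2}$ holds once $k$ is large: indeed $\delta\le\tfrac12\le1$ by \eqref{PsiTau}, and $\delta\ge\tfrac12(2^k)^{-\tau_0+\varepsilon}\ge(2^k)^{-\tau_0+\varepsilon/2}$ as soon as $2^{k\varepsilon/2}\ge2$. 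Theorem~\ref{UbiquityLowerBound} then gives
$$
\mu_d\!\left(\mathcal{B}\cap\bigcup_{(q,\bm{a})\in\mathcal{R}_A^\kappa(2^k,\delta)}B\!\left(\frac{\bm{a}}q,\ \frac{\kappa^{-1}}{(2^k)^{\frac{d+1}d}\delta^{\frac md}}\right)\right)\ge\frac12\mu_d(\mathcal{B}).
$$

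It then remains to check that this union is contained in $\bigcup_{(q,\bm{a})\in\mathcal{J},\,q\le2^k}B\big(\bm{a}/q,\rho(2^k)\big)$, which has two ingredients. First, each $(q,\bm{a})\in\mathcal{R}_A^\kappa(2^k,\delta)$ lies in $\mathcal{J}$ with height $h_{(q,\bm{a})}=q\le2^k$: one has $\bm{a}/q\in\mathcal{U}$, and, since $q\le2^k$ and $\psi$ is decreasing, $\|(q,\bm{a})A\|<\delta=\tfrac12\psi(2^k)\le\tfrac12\psi(q)$, which is precisely the membership condition for $\mathcal{J}$. Second, the covering radius is dominated by $\rho(2^k)$: substituting $\delta=\tfrac12\psi(2^k)$,
$$
\frac{\kappa^{-1}}{(2^k)^{\frac{d+1}d}\delta^{\frac md}}
=\frac{\kappa^{-1}2^{m/d}}{\big((2^k)^{d+1}\psi(2^k)^m\big)^{1/d}}
\le\frac{u(2^k)^{1/d}}{\big((2^k)^{d+1}\psi(2^k)^m\big)^{1/d}}=\rho(2^k)
$$
as soon as $u(2^k)\ge2^m\kappa^{-d}$, which holds for all large $k$ because $u(q)\to\infty$. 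Combining the two inclusions with the displayed measure estimate yields $\mu_d\big(\mathcal{B}\cap\bigcup_{(q,\bm{a})\in\mathcal{J},\,q\le2^k}B(\bm{a}/q,\rho(2^k))\big)\ge\tfrac12\mu_d(\mathcal{B})$ for all $k\ge k_0(\mathcal{B},A,\varepsilon)$, and passing to $\liminf_{k\to\infty}$ gives the claimed local ubiquity relative to $\rho$.

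I expect no genuine obstacle here: the lemma is in essence a repackaging of Theorem~\ref{UbiquityLowerBound}, and the only points requiring a little care are (i) absorbing the constant $\kappa^{-1}2^{m/d}$ from that theorem's covering radius into the diverging factor $u$ so that the radius fits under $\rho$, and (ii) using the monotonicity of $\psi$ to convert the single-scale choice $\delta=\tfrac12\psi(2^k)$ into the condition $\|(q,\bm{a})A\|<\tfrac12\psi(q)$ defining $\mathcal{J}$ uniformly over $q\le2^k$.
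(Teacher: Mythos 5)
Your proof is correct and follows essentially the same route as the paper: both deduce the lemma from Theorem \ref{UbiquityLowerBound} by taking $\delta$ comparable to $\psi(Q)$ and absorbing the constant $\kappa^{-1}$ (and your extra $2^{m/d}$) into the diverging factor $u(Q)^{1/d}$ so the covering radius sits below $\rho(Q)$. If anything, your choice $\delta=\tfrac12\psi(2^k)$ is slightly more careful than the paper's $\delta=\psi(Q)$, since it matches the factor $\tfrac12$ in the definition of $\mathcal{J}$ exactly rather than up to a harmless constant.
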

\begin{proof}
First of all, from the conditions \eqref{u(q)Condition} and \eqref{PsiTau}, it is immediately clear that $\rho(q)\to0$ as $q\to\infty$.

Secondly, we know that there exists some $0<\kappa<1$ such that the conclusion of Theorem \ref{UbiquityLowerBound} holds with $\delta=\frac{\psi(Q)}2$. Moreover we may take $Q$ large enough so that $u(Q)>\kappa^{-d}$. Hence for sufficiently large $Q$ we have
\begin{align*}
&\mu_d\left(\mathcal{B}\cap \bigcup_{\substack{Q/u(Q)^{1/d}<q\le Q\\\bm{a}/q\in\mathcal{U}\\\|(q,\bm{a})A\|<\frac{\psi(Q)}2}}B\left(\frac{\bm{a}}q,\rho(Q)\right)\right)\\
\ge&\mu_d\left(\mathcal{B}\cap \bigcup_{(q,\bm{a})\in\mathcal{R}^\kappa(Q,\frac{\psi(Q)}2)}B\left(\frac{\bm{a}}q,\frac{\kappa^{-1}}{Q^{\frac{d+1}d}(\frac{\psi(Q)}2)^{\frac{m}d}}\right)\right)\\
\ge&\frac12\mu_d(\mathcal{B}),
\end{align*}
which demonstrates that $(\mathcal{R},h)$ is locally ubiquitous relative to $\rho$.
\end{proof}

\begin{lem}\label{FullMeasure}
With $\Psi$, $\mathcal{R}$, $h$, and $\rho$ defined as above, we have
$$
\mathcal{H}^s(\Lambda_\mathcal{R}(\Psi))=\mathcal{H}^s(\mathcal{U})\quad\text{if}\quad\sum_{q=1}^\infty \psi(q)^{m+s}q^{d-s}=\infty.
$$
\end{lem}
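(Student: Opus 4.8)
The plan is to deduce Lemma \ref{FullMeasure} from the abstract ubiquity theorem (Lemma \ref{UbiquitousTheorem}), fed by the local ubiquity already established in Lemma \ref{Ubiquity}. Recall that by the reductions \eqref{PsiBound} carried out at the start of the proof of Theorem \ref{JTD} we may assume $q^{-\frac{d+1-s}{m+s}-\varepsilon}\le\psi(q)\le q^{-\frac{d+1-s}{m+s}}$ with $\varepsilon>0$ at our disposal. Since the left inequality in \eqref{RangeOfS-Div} is equivalent to $\frac{d+1-s}{m+s}<\tau_0$, for $\varepsilon$ small enough the lower bound above implies the hypothesis \eqref{PsiTau} of Lemma \ref{Ubiquity}. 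One also checks immediately that $\Psi(q)=c_2\psi(q)/q$ is decreasing and satisfies $\Psi(2^{k+1})\le\tfrac12\Psi(2^k)$ for every $k$ (since $\psi$ is decreasing and $2^k/2^{k+1}=\tfrac12$), so the structural hypotheses of Lemma \ref{UbiquitousTheorem} are met.

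The crux is the choice of the auxiliary function $u$, hence of $\rho$, in Lemma \ref{Ubiquity}. Set $b_k:=2^{k(d+1-s)}\psi(2^k)^{m+s}$. The upper bound in \eqref{PsiBound} gives $b_k\le1$ for all $k$, while a routine dyadic comparison (using that $\psi$ is decreasing and $d-s\ge0$) yields $\sum_q\psi(q)^{m+s}q^{d-s}\le 2^{d-s}\sum_k b_k$; hence the divergence hypothesis of the lemma forces $\sum_k b_k=\infty$. Put $S_k:=\sum_{j=1}^k b_j$, so $S_k\to\infty$ and $S_k\le k$, and take $u(q):=\max\{1,S_{\lfloor\log_2 q\rfloor}\}$. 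Then $u(q)\to\infty$ and $u(q)=O(\log q)=O(q)$, so \eqref{u(q)Condition} holds and Lemma \ref{Ubiquity} applies with this $u$, yielding local ubiquity of $(\mathcal{R},h)$ relative to $\rho(q)=\bigl(u(q)/(\psi(q)^m q^{d+1})\bigr)^{1/d}$.

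It remains to verify the divergence condition of Lemma \ref{UbiquitousTheorem} for this $\rho$. A direct substitution gives $\Psi(2^k)^s/\rho(2^k)^d=c_2^s\,b_k/u(2^k)=c_2^s\,b_k/S_k$ for all large $k$. Since $\sum_k b_k=\infty$ and $S_k$ is the sequence of its partial sums, the elementary Abel--Dini estimate applies: for fixed $k$ and $N>k$ one has $\sum_{k<i\le N}b_i/S_i\ge (S_N-S_k)/S_N\to1$ as $N\to\infty$, so $\sum_k b_k/S_k=\infty$. Therefore $\sum_k\Psi(2^k)^s/\rho(2^k)^d=\infty$, and Lemma \ref{UbiquitousTheorem} gives $\mathcal{H}^s(\Lambda_\mathcal{R}(\Psi))=\mathcal{H}^s(\mathcal{U})$, which is the assertion.

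The only genuinely non-formal point — and the step I expect to need the most care — is the construction of $u$: it must grow slowly enough (being steered by the partial sums $S_k$) that $\sum b_k/u(2^k)$ still diverges, yet satisfy $u(q)\to\infty$ and $u(q)=O(q)$ so that Lemma \ref{Ubiquity} is applicable. The normalization $b_k\le1$ coming from \eqref{PsiBound} is exactly what makes $S_k\le k$, and hence $u(q)=O(q)$, valid.
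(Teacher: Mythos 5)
Your proof is correct and follows essentially the same route as the paper: verify the hypotheses of Lemma \ref{Ubiquity} using \eqref{PsiBound} and \eqref{RangeOfS-Div}, build a slowly growing $u$ with $u(q)=O(\log q)$ so that the ubiquity function $\rho$ still makes $\sum_k\Psi(2^k)^s\rho(2^k)^{-d}$ diverge, and conclude via Lemma \ref{UbiquitousTheorem}. The only difference is cosmetic: you take $u$ to be the partial sums of the dyadic block contributions $b_k$ and invoke Abel--Dini, whereas the paper groups $q$ into blocks each contributing at least $2^{d+1-s}$, sets $u=i$ on the $i$-th block, and uses $\sum 1/i=\infty$ together with Cauchy condensation; both constructions rely on the normalization $\psi(q)\le q^{-\frac{d+1-s}{m+s}}$ from \eqref{PsiBound} to guarantee $u(q)=O(q)$.
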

\begin{proof}
Since $\psi$ is decreasing, we have $\Psi(2^{k+1})\le \frac{\Psi(2^k)}2$. Due to \eqref{RangeOfS-Div}, we may find $\varepsilon>0$ so that
$$
\frac{d+1-s}{m+s}+2\varepsilon<\tau_0.
$$
This, together with \eqref{PsiBound}, implies in particular that \eqref{PsiTau} is satisfied. 

Now we construct $u(q)$ as follows. In view of the divergence condition \eqref{DivCondition}, we may find an increasing sequence of positive integers $\{q_i\}$ such that 
$$
\sum_{q_i\le q<q_{i+1}}\psi(q)^{m+s}q^{d-s}\ge 2^{d+1-s}.
$$
Then define $$u(q):=i\quad\text{for } q_{i}\le q< q_{i+1}.$$
Note that
\begin{align*}
    \sum_{q=1}^\infty\psi(q)^{m+s}q^{d-s}u(q)^{-1}&=\sum_{i=1}^\infty \sum_{q_i\le q<q_{i+1}}\psi(q)^{m+s}q^{d-s}u(q)^{-1}\\
    &\ge2^{d+1-s}\sum_{i=1}^\infty\frac{1}i=\infty.
\end{align*}
In addition, since $\psi(q)^{m+s}u(q)^{-1}$ is decreasing, we know by the Cauchy condensation that
\begin{equation}\label{UbiquityDiv}
\sum_{k=1}^\infty \psi(2^k)^{m+s}(2^k)^{d+1-s}u(2^k)^{-1}=\infty.
\end{equation}

Next we show that the function $u(q)$ defined above actually goes to infinity only at logarithmic rate. Since
\begin{align*}
    2^{d+1-s}&\le\sum_{q_i\le q<q_{i+1}}\psi(q)^{m+s}q^{d-s}\\
    &\le\sum_{q_i\le q<q_{i+1}}\psi(q_i)^{m+s}q_{i+1}^{d-s}\\
    &\overset{\eqref{PsiBound}}\le q_{i+1}^{d+1-s}q_{i}^{-(d+1-s)}
\end{align*}
we see that $q_{i+1}\ge2q_i$ and hence that
$$
q_i\ge2^{i-1}.
$$
This means that for $q_{i}\le q<q_{i+1}$, we have $\log_2q\ge\log_2q_i\ge i-1$. So
$$
u(q)=i\le \log_2q+1=O(q).
$$

Hitherto, we have verified the growth conditions \eqref{u(q)Condition} for $u(q)$ and \eqref{PsiTau} for $\psi(q)$. Therefore, by Lemma \ref{Ubiquity}, $(\mathcal{R},h)$ is locally ubiquitous relative to $\rho$. Moreover, it is readily seen that the series
\begin{align*}
    \sum_{k=1}^\infty\frac{\Psi(2^k)^s}{\rho(2^k)^d}&=\sum_{k=1}^\infty\frac{c_2^s\psi(2^k)^s(2^k)^{-s}}{2^mu(2^k)\psi(2^k)^{-m}(2^k)^{-(d+1)}}\\
    &=\frac{c_2^s}{2^m}\sum_{k=1}^\infty \frac{\psi(2^k)^{m+s}(2^k)^{d+1-s}}{u(2^k)}
\end{align*}
is divergent in view of \eqref{UbiquityDiv}. The proof of Lemma \ref{FullMeasure} is thus concluded after applying Lemma \ref{UbiquitousTheorem}.
\end{proof}

We are now ready to complete the proof of  Theorem \ref{JTD}. Under the range condition \eqref{RangeOfS-Div} and the divergence condition \eqref{DivCondition}, it follows from Lemma \ref{LambdaInclusion} and Lemma \ref{FullMeasure} that 
\begin{equation}\label{LocalFullMeasure}
\mathcal{H}^s(\Omega(A,\psi)\cap\mathcal{U})=\mathcal{H}^s(\mathcal{U}).
\end{equation}
When $s<d$, this already gives the desired conclusion $\mathcal{H}^s(\mathcal{S}_n(\psi)\cap\mathcal{L})=\infty$ in view of \eqref{Projection}. For the case $s=d$, we need to show that the complement of $\Omega(A,\psi)$ has zero $d$-dimensional Hausdorff measure in $\R^d$. To prove this, we first of all apply Lemma \ref{Shift} and obtain
$$
\mathcal{H}^d\left(\Omega(A, \psi)\cap \mathcal{U}_{\bm{k}}\right)=\mathcal{H}^d\left(\Omega(A_{\bm{k}},\psi)\cap \mathcal{U}\right),
$$
where $A_{\bm{k}}$ is defined in \eqref{Ak}. Furthermore, it is a consequence of Lemma \ref{Transform} that $\omega(A_{\bm{k}})=\omega(A)$.
So our argument shows that \eqref{LocalFullMeasure} still holds with $A$ replaced by $A_{\bm{k}}$. Hence $\Omega(A,\psi)\cap\mathcal{U}_{\bm{k}}$ has full measure in each $\mathcal{U}_{\bm{k}}$, which completes the proof.

\section{Further questions}
While we have solved some main problems regarding simultaneous diophantine approximation on affine subspaces,  below we compose a list of further questions related to the subject matter studied in this paper, not in any particular order. Some of them might be significantly harder than others. 
\begin{prob}
Close in the logarithmic gap in the classification of Khintchine type affine subspaces. Or even more ambitiously, try to formulate and prove a necessary and sufficient condition for an affine subspace to be of Khintchine type for convergence. 
\end{prob}

  \begin{prob}Can we give explicit affine subspaces that are of Khintchine type for convergence but not of strong Khintchine type for convergence? See Remark \ref{StrongKTC_VS_KTC}.
   \end{prob}

      \begin{prob}
      Prove that an affine subspace $\mathcal{L}_A$ is of Groshev type if $\omega(A)<n$. See Remark \ref{Groshev}.
      \end{prob}

      \begin{prob}
     For interesting objects, e.g. manifolds and fractals, is the Khintchine type theory always equivalent to the Groshev type theory? Put differently, is it true that as long as some object is of Khintchine type  then it is also of Groshev type  and vice versa? If not, can we find explicit counterexamples? Well, at least we can rule out nondegenerate manifolds and focus on affine subspaces and fractals. 
      \end{prob}

      \begin{prob}
      Is the condition $\omega(A) < \frac{m+ s}{d + 1 - s}$ in Theorem \ref{JTC} optimal up to the end point just like the  case $s=d$ on the Khintchine type theory? Precisely, is it true that if $\omega(A) > \frac{m+ s}{d + 1 - s}$ then the conclusion of Theorem \ref{JTC} is false? See Remark \ref{JTC-is-optimal?}.

      \end{prob}

        \begin{prob}
        Do nondegenerate submanifolds of an affine subspace inherit its Khintchine type property? In other words, is it true that an affine subspace is of Khintchine type if and only if all of its nondegenerate submanifolds are also of Khintchine type? The inheritance principle has been shown to hold for other diophantine properties such as extremality and strong extremality \cite{Kle03},    dual diophantine exponent \cite{Kle08}, and  simultaneous diophantine exponent  \cite{Zha09}. It is also partially known for the multiplicative diophantine exponent in the context of hyperplanes \cite{Zha12}.
        \end{prob}

\begin{prob}
Prove the inhomogeneous version of Theorem \ref{JTD}.
\end{prob}

\begin{prob}
Are all affine subspaces of Khintchine type for divergence? 
\end{prob}

\begin{prob}
Find an explicit formula for $\sigma(\mathcal{L})$ for arbitrary $\mathcal{L}$. See Remark \ref{SigmaFormula}.
\end{prob}

\begin{prob}
Find an exact formula for $\dim\mathcal{S}_n(\tau)\cap\mathcal{L}$  when $\frac1{\omega(A)}<\tau<\sigma(A)$. See Remark \ref{ExactHausdorffDim}.
\end{prob}

\proof[Acknowledgements]

The author expresses sincere gratitude to Professor John Friedlander for his invaluable guidance, generous support, and constant encouragement during the author’s early career, when it mattered the most. The author is also deeply grateful to his thesis advisor Professor Robert Vaughan for introducing him to the wonderland of metric diophantine approximation.

The author is indebted to the  anonymous referee for carefully reading the manuscript and providing helpful comments and suggestions.

\end{document}